\newcommand\characx{\mathbf{x}}
\newcommand{\key}{\kappa}
\newtheorem{theorem}{Theorem}
\newtheorem{remark}{Remark}
\newtheorem{proposition}{Proposition}
\newtheorem{corollary}{Corollary}
\newtheorem{lemma}{Lemma}
\newtheorem{definition}{Definition}
\DeclareMathOperator{\Tab}{Tab}
\DeclareMathOperator{\SHive}{SHive}
\DeclareMathOperator{\Hive}{Hive}
\DeclareMathOperator{\GT}{GT}
\title[Saturation for Flagged Skew Littlewood-Richardson Coefficients]{Saturation for Flagged Skew\\ Littlewood-Richardson coefficients}
\author[]{Siddheswar Kundu}
\address{The Institute of Mathematical Sciences, A CI of Homi Bhabha National Institute, Chennai 600113, India}
\email{siddheswark@imsc.res.in}
\author[]{K.N. Raghavan}
\address{The Institute of Mathematical Sciences, A CI of Homi Bhabha National Institute, Chennai 600113, India}
\email{knr@imsc.res.in}
\author[]{V. Sathish Kumar}
\address{The Institute of Mathematical Sciences, A CI of Homi Bhabha National Institute, Chennai 600113, India}
\email{vsathish@imsc.res.in}
\author[]{Sankaran Viswanath}
\address{The Institute of Mathematical Sciences, A CI of Homi Bhabha National Institute, Chennai 600113, India}
\email{svis@imsc.res.in}
\date{}
\keywords{Skew Hives, Skew GT Patterns, Saturation, Flagged Littlewood-Richardson Coefficients, Crystals}  
\thanks{The authors acknowledge partial funding from a DAE Apex Project grant to the Institute of Mathematical Sciences, Chennai.}
\subjclass[]{05E05 (05E16)}
\begin{document} 
\begin{abstract}
We define and study a generalization of the Littlewood-Richardson (LR) coefficients, which we call the flagged skew LR coefficients. These subsume several previously studied extensions of the LR coefficients. We establish the saturation property for these coefficients, generalizing work of Knutson-Tao and Kushwaha-Raghavan-Viswanath. 
\end{abstract}

\maketitle
\section{Introduction}

The Littlewood-Richardson (LR) coefficients are among the most celebrated numbers in algebraic combinatorics. They are the multiplicities in the decomposition into irreducibles of tensor products of irreducible polynomial representations of general linear groups. As such, they are the structure constants for the multiplication of Schur polynomials (which form a basis for the ring of symmetric polynomials).
These coefficients also determine the branching of irreducible representations of symmetric groups on restriction to Young subgroups.
They also come up in several other places.
For example, they occur as structure constants for the multiplication of Schubert cohomology classes of Grassmanians. 

Several generalizations of these coefficients can be found in the literature: e.g., \cite{Z}, \cite{RS}, \cite{krvfpsac}. Our broader goal is to investigate, for some of these generalizations, the analogue of the saturation theorem of Knutson-Tao for the LR coefficients. In this paper, we consider a simultaneous generalization of Zelevinsky's skew LR coefficients \cite{Z} and the flagged LR coefficients of \cite{krvfpsac}. We prove that these {\em flagged skew LR coefficients} exhibit the saturation property.

In order to do this, we need to lift the main {\em key-positivity} result of \cite{RS} to crystals; we do this using a recent result of Assaf \cite{sami}. We also give a hive-like model for the flagged skew LR coefficients. Although a skew Gelfand-Tsetlin (GT) polytope need not be isomorphic to a GT polytope, it turns out that every flagged skew hive polytope is isomorphic to some flagged hive polytope.

\section{Preliminaries and Statements of Main Theorems}\label{s:prelims}
\quad A \emph{partition} $\lambda = (\lambda_1, \lambda_2, \cdots)$ is a weakly decreasing sequence of non-negative integers with finitely many non-zero terms (or parts). The \emph{length} of the partition $\lambda$ is defined to be the largest integer $i$ such that $\lambda_i$ is non-zero and we denote it by $l(\lambda)$. The \emph{weight} of $\lambda $ is the sum of its parts and we denote it  by $|\lambda|$. We denote by $\mathcal{P}[n]$ the set of all partitions of length at most $n$. The \emph{Young diagram} of the partition $\lambda$ is the left and top justified collection of boxes such that row $i$ contains $\lambda_i$ boxes. We denote it again by $\lambda$. For Young diagrams $\lambda$ and $\mu$ such that $\lambda \supset \mu $ (i.e., $\lambda_i \geq \mu_i \text{ }\forall i $), the \emph{skew diagram} $\lambda / \mu$ is obtained by removing the boxes of $\mu$ from those of $\lambda$.
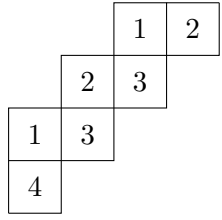
\begin{figure}
     \centering
     \begin{tikzpicture}
    \draw (0,0) -- (0,1.4) -- (2.1, 1.4) -- (2.1, 2.8) -- (2.8,2.8);
    \draw (2.8,2.8) -- (2.8,2.1) -- (0.7, 2.1) -- (0.7, 0) -- (0,0);
    \draw (0, 0.7) -- (1.4, 0.7) -- (1.4, 2.8) -- (2.1, 2.8);
    \draw (0.35, 0.35) node {$4$};
    \draw (0.35, 1.05) node {$1$};
    \draw (1.05, 1.05) node {$3$};
    \draw (1.05, 1.75) node {$2$};
    \draw (1.75, 1.75) node {$3$};
    \draw (1.75, 2.45) node {$1$};
    \draw (2.45, 2.45) node {$2$};
\end{tikzpicture}
     \caption{A skew tableau of shape $(4,3,2,1)/(2,1)$, weight $(2,2,2,1)$ and reverse-row reading word 2132314.}
     \label{fig:my_label}
\end{figure}

Given partitions $\lambda \supset \mu$, a \emph{semi-standard skew tableau} of shape $\lambda / \mu$ is a filling of the skew diagram $\lambda / \mu$ that is weakly increasing along the rows (from left to right) and strictly increasing along the columns (from top to bottom). A semi-standard tableau of shape $\lambda $ is just a semi-standard skew tableau of shape $\lambda/\text{empty} $ . We denote by $\Tab(\lambda / \mu)$ the set of all semi-standard skew tableaux of shape $\lambda / \mu$ and $\Tab(\lambda / \mu, n) $ the subset of $\Tab(\lambda / \mu)$ where the fillings are all $\leq n$. It will be convenient to let $\Tab(\lambda / \mu)$ be the empty set if $\lambda, \mu$ are partitions with $\lambda \not\supset \mu$. The \emph{weight} of a tableau $T \in \Tab(\lambda /\mu, n)$ is defined as $wt(T) = (t_1, t_2, \cdots , t_n)$, where $t_i$ is the number of times $i$ occurs in $T$. A standard $(\text{skew})$ tableau $ T$ is a semi-standard $(\text{skew})$ tableau of the same shape in which $1,2,\cdots, k$ appears exactly once, where $k$ is the number of boxes in $ T$. For $T\in \Tab(\lambda / \mu)$, we write $b_T$ to denote the \emph{reverse-row reading word of $T$} which is the word obtained by reading $T$ right to left and from top to bottom.

Fix $n$ a positive integer. A flag $\Phi$ = $(\Phi _1, \Phi _2, \cdots , \Phi_n)$ is a weakly increasing sequence of positive integers such that\footnote{In the literature, a flag need not have $\Phi_n = n$, but for our purposes it is sufficient to consider only such flags.} $\Phi_n = n$. For $\lambda, \mu \in \mathcal{P}[n]$ and flag $\Phi$ = $(\Phi _1, \Phi _2,\cdots, \Phi_n)$, $\Tab(\lambda / \mu, \Phi)$ is the set of all elements $T$ in $\Tab(\lambda /\mu)$ such that the entries in row $i$ of $T$ are at most $\Phi _i$ for $1 \leq i \leq n$. Following Reiner-Shimozono \cite{RS}, we define the {\em flagged skew Schur polynomial} $$s_{\lambda / \mu}(X_{\Phi}) = \sum _{T} \characx^{wt(T)}$$ where $T$ varies over $\Tab(\lambda / \mu, \Phi)$ and for $t = (t_1, t_2, \cdots, t_n) \in \mathbb{Z}^n _{+}$, $\characx^t$ denotes the monomial $x_1 ^{t_1} x_2 ^{t_2} \cdots x_n ^{t_n}$.
When $\Phi =(n, n, \cdots, n)$, these reduce to the skew Schur polynomials $s_{\lambda / \mu}(x_1, x_2, \cdots, x_n)$. When $\mu$ is the empty partition, they become the flagged Schur polynomials $s_{\lambda}(X_{\Phi})$, which coincide with key polynomials corresponding to $312$-avoiding permutations \cite[theorem 14.1]{PS}.
The flagged skew Schur polynomials $s_{\lambda / \mu}(X_\Phi)$ also have a representation theoretic interpretation as characters of certain Borel modules called {\em flagged Schur modules} \cite{rs-percentage}.	

For $1 \leq i \leq n-1$, define the Demazure operator $T_i$ on the ring of polynomials in the variables $x_1, x_2 \cdots, x_n$ as follows:
$$(T_i f)(x_1, x_2 \cdots, x_n) = \frac{x_i\, f(x_1, x_2,  \cdots, x_n) - x_{i+1}\, f(x_1, \cdots, x_{i-1}, x_{i+1}, x_i, x_{i+2},  \cdots, x_n )}{x_i - x_{i+1}}$$
For $w \in \mathfrak{S}_n$ (the symmetric group), we define $$T_w =  T_{i_1}T_{i_2} \cdots T_{i_k}$$ where $s_{i_1}s_{i_2} \cdots s_{i_k}$ is a reduced expression for $w$. This is well-defined because the $T_i$'s satisfy the braid relations.

For $\alpha \in \mathbb{Z}^n_{+}$, let $\alpha ^{\dagger}$ be the partition obtained by sorting the parts of $\alpha$ in descending order and let $\omega \in \mathfrak{S}_n$ be any permutation such that $\omega \alpha ^{\dagger} = \alpha$ (here, the action of $\omega$ is the usual left action of $\mathfrak{S}_n$ on $n$-tuples). We recall that the {\em key polynomial} $\key_\alpha$ is defined to be $\key_{\alpha} = T_{\omega}(\characx^{\alpha ^{\dagger}})$, and that this is independent of the choice of $\omega$. A polynomial $f$ is said to be {\em key-positive} if it is a sum of key polynomials. If $f$ is key-positive, then  $\characx^\lambda\, f$ and  $T_w(f)$ are also key-positive, for all $\lambda \in \mathcal{P}[n]$ and all $w \in \mathfrak{S_n}$. The former follows from a theorem of Joseph \cite{Joseph1} and the latter from the fact that a
composition of Demazure operators is itself a Demazure operator.
Reiner-Shimozono \cite[Theorem 20]{RS} showed that the flagged skew Schur polynomial $ s_{\lambda / \mu}(X_{\Phi})$ is key-positive. 

Now, if $w_0$ denotes the longest element of $\mathfrak{S}_n$,  we have 
\begin{equation}
  \label{key symmetrization}
  T_{w_0}(\key_{\alpha}) = s_{\alpha ^{\dagger}}(x_1, x_2, \cdots, x_n), 
\end{equation}
the Schur polynomial indexed by ${\alpha ^{\dagger}}$. More generally, (since the key polynomials form a $\mathbb Z$-basis of the polynomial ring in $n$ variables \cite[corollary 7]{RS}) given any polynomial $f = f(x_1, x_2, \cdots, x_n)$, we have that $T_{w_0}(f)$ is a symmetric polynomial, which can therefore be expanded in the basis of Schur polynomials. If further $f$ is key-positive, then equation~\eqref{key symmetrization} shows that $T_{w_0}(f)$ is Schur-positive, i.e., a sum of Schur polynomials.
This leads us to the main objects of our study.
\begin{definition}\label{def:flagskewLR}
  For $\lambda, \mu, \gamma \in \mathcal{P}[n]$ and flag $\Phi=(\Phi_1,\cdots ,\Phi_n)$, let 
  \begin{equation} \label{eq:defeqn}
    T_{w_0}(\characx^{\lambda} \,s_{\mu / \gamma}(X_{\Phi})) = \sum _{\nu \in \mathcal{P}[n]} c_{\lambda, \, \mu/\gamma} ^{\,\nu} (\Phi)  \hspace{0.2cm} s_{\nu}(x_1, x_2, \cdots, x_n)
    \end{equation}
  We call the coefficients $c_{\lambda, \, \mu/\gamma} ^{\,\nu} (\Phi)$ as the \textbf{flagged skew Littlewood-Richardson coefficients}.
  \end{definition}

By the remarks preceding equation~\eqref{key symmetrization}, it follows that the LHS of \eqref{eq:defeqn} is Schur positive, and thus the flagged skew Littlewood-Richardson coefficients are non-negative integers.
It is clear by definition that these coefficients are zero if $\mu \not \supset \gamma$. It will follow from Theorem~\ref{first result} below that they are also zero if $\nu \not\supset \lambda$.

%{\color{red} $T_{w_0}(x^{\lambda} s_{\mu / \gamma}(X_{\Phi})) $ is a generalization of the character of a Kostant-Kumar module. See \cite{krvfpsac}. }

These coefficients subsume many other extensions of the Littlewood-Richardson coefficients.  When $\Phi = (n, n, \cdots, n)$, these become Zelevinsky's extension \cite{Z} of the Littlewood-Richardson coefficients $c_{\lambda, \, \mu/\gamma} ^{\, \nu}$ defined by $s_{\lambda}(\textbf{x})\, s_{\mu / \gamma}(\textbf{x})= \sum_{\nu \in \mathcal{P}[n] } c_{\lambda,   \, \mu/\gamma} ^{\, \nu} \,s_{\nu}(\textbf{x})$. These in turn reduce to the usual Littlewood-Richardson coefficients when we further take $\gamma = (0, 0, \cdots, 0)$.

On the other hand, if we take $\gamma = (0, 0, \cdots, 0)$ but let $\Phi$ remain arbitrary, we get the \emph{w-refined Littlewood-Richardson coefficients} of \cite{krvfpsac} for \emph{312-avoiding permutations} $w$.

If we set $\lambda = (0, \dots, 0)$, we have $c_{\lambda, \,\mu/\gamma} ^{\,\nu} (\Phi) = \sum _{\, \alpha} c^{\,\mu / \gamma, \Phi}_{\,\alpha}$ where the sum runs over all compositions $\alpha$ that are obtained by permuting the parts of $\nu$. The coefficients on the right are the ones which appear in the flagged Littlewood-Richardson expansion of \cite[section 7]{RS}.

Our first result provides two combinatorial models for flagged skew LR coefficients (see \S \ref{sec:crys}, \ref{sec:hive} for undefined terms) that generalize those for LR coefficients:
\begin{theorem}
Let $\Phi=(\Phi_1, \Phi_2, \cdots, \Phi_n)$ be a flag and $\lambda,\mu,\nu,\gamma \in \mathcal{P}[n] $. Then,
\begin{enumerate}
    \item $c_{\lambda, \,\mu/\gamma} ^{\,\nu} (\Phi)$ is the cardinality of the set of all $\lambda$-dominant tableaux in $ \Tab (\mu /\gamma, \Phi )$ of weight $\nu - \lambda$. 
    \item $c_{\lambda, \,\mu/\gamma} ^{\,\nu} (\Phi)$ is the number of the integral points of the flagged skew hive polytope $ \SHive (\lambda, \mu, \gamma, \nu, \Phi)$.
\end{enumerate}
\label{first result}
\end{theorem}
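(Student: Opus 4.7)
For part (1), my plan is to work in the crystal setting. Using a crystal-theoretic lift of Reiner-Shimozono's key-positivity (supplied by Assaf's recent result alluded to in the introduction), I would equip $\Tab(\mu/\gamma, \Phi)$ with a Demazure-type crystal structure whose character decomposes as a sum of key polynomials $\key_\alpha$. Multiplying by $\characx^\lambda$ corresponds to tensoring this crystal with a singleton crystal $\{T_\lambda\}$ of weight $\lambda$; by Joseph's theorem, the result is again a disjoint union of Demazure crystals. Applying $T_{w_0}$ converts each $\key_\alpha$ into $s_{\alpha^\dagger}$, so by general crystal theory,
\[
c_{\lambda,\mu/\gamma}^{\,\nu}(\Phi) \;=\; \#\{\text{highest-weight elements of } \Tab(\mu/\gamma,\Phi)\otimes\{T_\lambda\} \text{ of weight } \nu\}.
\]
The tensor-product rule for crystals translates the highest-weight condition on $T\otimes T_\lambda$ exactly into the $\lambda$-dominance of $T$: reading $b_T$ one letter at a time, the partial letter tallies added to $\lambda$ always remain a partition. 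Restricting to weight $\nu$ recovers the claimed enumeration in (1).

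For part (2), my plan is to reduce the skew case to the non-skew one. I would first realize $\SHive(\lambda,\mu,\gamma,\nu,\Phi)$ explicitly as the polytope cut out by rhombus inequalities, boundary conditions recording $\lambda,\mu,\gamma,\nu$, and additional inequalities imposed by $\Phi$. Then, following the remark in the introduction that every flagged skew hive polytope is isomorphic to a flagged (non-skew) hive polytope, I would construct a unimodular (hence integer-point-preserving) change of coordinates sending $\SHive(\lambda,\mu,\gamma,\nu,\Phi)$ onto a suitable $\Hive(\widetilde\lambda,\widetilde\mu,\widetilde\nu,\widetilde\Phi)$. The integer points of the latter are already known---by the non-skew case of the hive model, established in earlier work for $w$-refined LR coefficients---to enumerate the relevant flagged tableaux, which in turn matches the count from part (1).

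The main obstacle is the crystal lift used in part (1). While Reiner-Shimozono's key-positivity of $s_{\mu/\gamma}(X_\Phi)$ holds at the level of characters, to conclude that highest-weight elements in $\Tab(\mu/\gamma,\Phi)\otimes\{T_\lambda\}$ correctly enumerate Schur multiplicities, one needs a genuine Demazure-crystal structure on $\Tab(\mu/\gamma,\Phi)$ compatible with the tensor-product rule---and crucially with the flag, so that the crystal operators stabilize $\Tab(\mu/\gamma,\Phi)$ rather than only $\Tab(\mu/\gamma)$. This is precisely what Assaf's theorem is invoked for. In part (2), the delicate step is verifying that the unimodular coordinate change genuinely absorbs the skew-boundary data $\gamma$ together with the flag into a non-skew flagged hive with correctly adjusted parameters.
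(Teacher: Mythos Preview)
Your approach to part~(1) is essentially the paper's: establish that $\Tab(\mu/\gamma,\Phi)$ is a disjoint union of Demazure crystals (via Assaf's criterion), tensor with the singleton $\{T_\lambda^0\}$, invoke Joseph's theorem, and read off $c_{\lambda,\mu/\gamma}^{\,\nu}(\Phi)$ as the number of highest-weight elements of weight~$\nu$. One slip: you write the tensor product as $\Tab(\mu/\gamma,\Phi)\otimes\{T_\lambda\}$, but in the paper's convention the singleton must sit on the \emph{left}. With your ordering, $e_i(T\otimes T_\lambda^0)=0$ reduces to $e_i(T)=0$, which is not $\lambda$-dominance; with $T_\lambda^0\otimes T$ one gets $\varepsilon_i(T)\le\lambda_i-\lambda_{i+1}$, which is exactly the condition you then describe in words. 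Joseph's theorem is also stated for $\{T_\lambda^0\}\otimes D$, so the order matters there too.

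Your plan for part~(2), however, diverges from the paper and leaves a real gap. The paper does \emph{not} prove part~(2) by reducing to a non-skew hive polytope; it proves it directly by constructing a bijection $\Upsilon\circ\partial:\SHive_{\mathbb Z}(\lambda,\mu,\gamma,\nu)\to\Tab_\lambda^\nu(\mu/\gamma,n)$ (skew GT patterns are the intermediary), checking that the vertical rhombus inequalities encode $\lambda$-dominance and the NE/SE inequalities encode semistandardness, and then observing that the flag conditions on each side correspond under this bijection. The affine isomorphism $\SHive(\lambda,\mu,\gamma,\nu,\Phi)\cong\Hive(\tilde\lambda,\tilde\mu,\tilde\nu,\tilde\Phi)$ that you propose is exactly what the paper builds \emph{afterwards}, in order to deduce saturation from the non-skew case. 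If you try to use it for part~(2), your last sentence ``which in turn matches the count from part~(1)'' becomes the crux: the non-skew hive model gives $|\Hive_{\mathbb Z}(\tilde\lambda,\tilde\mu,\tilde\nu,\tilde\Phi)|=c_{\tilde\lambda,\tilde\mu}^{\,\tilde\nu}(w(\tilde\Phi))$, and you would still need to prove $c_{\tilde\lambda,\tilde\mu}^{\,\tilde\nu}(w(\tilde\Phi))=c_{\lambda,\mu/\gamma}^{\,\nu}(\Phi)$ (or equivalently a bijection $\Tab_{\tilde\lambda}^{\tilde\nu}(\tilde\mu,\tilde\Phi)\leftrightarrow\Tab_\lambda^\nu(\mu/\gamma,\Phi)$). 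In the paper this equality is a \emph{consequence} of part~(2), not an input to it, so invoking it here would be circular. The cleanest fix is simply to supply the direct bijection $\Upsilon\circ\partial$, which is short and self-contained.
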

Our proof of Theorem~\ref{first result} hinges on understanding the crystal structure on the set of flagged skew tableaux $\Tab(\mu / \gamma, \Phi)$. We show in particular that this set is a disjoint union of Demazure crystals; this lifts the key-positivity result of Reiner-Shimozono \cite[Theorem 20]{RS} from the level of characters to that of crystals.

The main theorem of this paper is the following {\em saturation property} of the flagged skew LR coefficients:
\begin{theorem} \label{main theorem}
    Let $\Phi$ be a flag and $\lambda,\mu,\nu,\gamma \in \mathcal{P}[n] $. Then,
    $$ c_{k\lambda, \,k\mu/k\gamma} ^{\,k\nu} (\Phi) > 0 \text{ for some } k \geq 1 \implies c_{\lambda, \,\mu/\gamma} ^{\,\nu} (\Phi) > 0 $$
\end{theorem}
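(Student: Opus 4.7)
My plan is to reduce the saturation statement for flagged skew LR coefficients to the corresponding result for ordinary flagged (non-skew) LR coefficients proved in \cite{krvfpsac}. The bridge is the hive model of Theorem~\ref{first result}(2), combined with the isomorphism promised in the introduction between any flagged skew hive polytope and some flagged hive polytope.

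The first step is to translate both sides of the implication into statements about integer points of hive polytopes. By Theorem~\ref{first result}(2), the hypothesis $c_{k\lambda,\, k\mu/k\gamma}^{\,k\nu}(\Phi) > 0$ is equivalent to $\SHive(k\lambda, k\mu, k\gamma, k\nu, \Phi)$ containing a lattice point, and similarly for the conclusion. Since the defining inequalities of $\SHive(\cdot, \Phi)$ are expected to be linear in the boundary data $(\lambda,\mu,\gamma,\nu)$, with the flag $\Phi$ entering only through fixed row-bound constraints, the identity
\begin{equation*}
\SHive(k\lambda, k\mu, k\gamma, k\nu, \Phi) = k \cdot \SHive(\lambda, \mu, \gamma, \nu, \Phi)
\end{equation*}
should follow directly from the definitions, converting the hypothesis into the statement that the $k$-fold dilation of $\SHive(\lambda, \mu, \gamma, \nu, \Phi)$ contains a lattice point.

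The central step is to make the flagged skew to flagged hive isomorphism explicit enough to transfer the saturation problem. I would exhibit an affine bijection, defined over $\mathbb{Z}$, of the form $\SHive(\lambda, \mu, \gamma, \nu, \Phi) \xrightarrow{\sim} \Hive(\tilde{\lambda}, \tilde{\mu}, \tilde{\nu}, \tilde{\Phi})$, where $\tilde{\lambda}, \tilde{\mu}, \tilde{\nu}$ and the flag $\tilde{\Phi}$ are built explicitly from the input data. Two properties are then essential: the bijection must send lattice points to lattice points, and it must be equivariant with respect to dilation in the sense that it identifies $k \cdot \SHive(\lambda, \mu, \gamma, \nu, \Phi)$ with $\Hive(k\tilde{\lambda}, k\tilde{\mu}, k\tilde{\nu}, \tilde{\Phi})$ for every $k \geq 1$. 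Writing the map out in terms of the hive inequalities should make both properties transparent, while also showing that the target flag $\tilde{\Phi}$ does not depend on $k$.

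With those pieces in place the endgame is immediate: the scaled hypothesis yields a lattice point of $\Hive(k\tilde{\lambda}, k\tilde{\mu}, k\tilde{\nu}, \tilde{\Phi})$; the saturation theorem for flagged Littlewood-Richardson coefficients from \cite{krvfpsac} supplies a lattice point of $\Hive(\tilde{\lambda}, \tilde{\mu}, \tilde{\nu}, \tilde{\Phi})$; and the inverse of the affine isomorphism pulls this back to a lattice point of $\SHive(\lambda, \mu, \gamma, \nu, \Phi)$, as required. I expect the principal obstacle to be the middle step, specifically constructing the isomorphism so that it is visibly integer-preserving and leaves the flag data on the target side unchanged as the boundary data are scaled; once this is in hand, the rest of the argument is purely formal.
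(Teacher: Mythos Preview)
Your proposal is correct and follows essentially the same route as the paper: it too uses Theorem~\ref{first result}(2) to pass to flagged skew hive polytopes, constructs an integer-preserving affine isomorphism $\SHive(\lambda,\mu,\gamma,\nu,\Phi)\cong\Hive(\tilde\lambda,\tilde\mu,\tilde\nu,\tilde\Phi)$ (embedding the $n$-parallelogram into a $2n$-hive triangle), checks that $\tilde\Phi$ depends only on $\Phi$ and $n$ while $(\tilde\lambda,\tilde\mu,\tilde\nu)$ are linear in the boundary data, and then invokes the saturation theorem of \cite{krvfpsac}. One small phrasing caveat: the isomorphism is only \emph{affine} (it shifts by $n\nu_1$), so a single map is not literally dilation-equivariant; what you actually need---and what the paper verifies---is that the assignment $(\lambda,\mu,\gamma,\nu)\mapsto(\tilde\lambda,\tilde\mu,\tilde\nu)$ is linear, so that the isomorphism for the $k$-scaled data lands in $\Hive(k\tilde\lambda,k\tilde\mu,k\tilde\nu,\tilde\Phi)$.
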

We remark that, as in the classical LR case, the stronger converse statement holds. Scaling $\lambda, \mu, \gamma, \nu$ by $k$ also dilates the polytope $\SHive (\lambda, \mu, \gamma, \nu, \Phi)$ by the factor $k$. Thus, $c_{\lambda, \,\mu/\gamma} ^{\,\nu} (\Phi) > 0$ implies that $ c_{k\lambda, \,k\mu/k\gamma} ^{\,k\nu} (\Phi) > 0$ for every $k \geq 1$, by Theorem~\ref{first result}.

\medskip

To prove Theorem~\ref{main theorem}, we construct an affine linear isomorphism between $\SHive (\lambda, \mu, \gamma, \nu) $ and a certain hive polytope, which preserves flags and integral points. Then we deduce theorem \ref{main theorem} from the corresponding theorem for hive polytopes \cite[Theorem 1.4]{krvfpsac}. 

\section{The crystal $\Tab(\mu/ \gamma, \Phi)$}\label{sec:crys}
The purpose of this section is to prove that the subset $\Tab(\mu / \gamma, \Phi)$ of the type $A_{n-1}$ crystal\\
$\Tab(\mu / \gamma, n)$ is a disjoint union of Demazure crystals. This is the key step in proving the first part of theorem \ref{first result} which we will see at the end of this section.  
 
By a crystal of type $A_{n-1}$ (see section 2.2 of \cite{bump-sch}), we mean a finite and non-empty set $\mathcal{B}$ together with maps $$e_i, f_i : \mathcal{B} \rightarrow \mathcal{B} \sqcup \{0\}$$

$$wt: \mathcal{B} \rightarrow \mathbb{Z}^{n}$$
where $i \in \{1,2,\cdots, n-1\}$ and $0 \not\in \mathcal{B}$ is an auxiliary element, satisfying the following conditions:
\begin{enumerate} 
    \item If $x, y \in \mathcal{B}$ then $e_i(x) = y$ if and only if $f_i(y) = x$. In this case it is assumed that $$wt(y) = wt(x) + (\epsilon_i - \epsilon_{i+1})$$ where $\epsilon_1, \epsilon_2, \cdots, \epsilon_{n}$ are the standard orthonormal vectors in $\mathbb{R}^{n}$.
    \item For all $x \in \mathcal{B}$ and $i \in \{1, 2, \cdots , n-1\}$ we require that $$\phi_i(x) = wt(x) \cdot (\epsilon_i - \epsilon_{i+1}) + \varepsilon _i (x)$$ where $\varepsilon_i(x)$ (resp. $\phi_i (y)$) is the maximum number of times $e_i$ (resp. $f_i$) can be applied to $x$ (resp. $y$) without making it $0$.
\end{enumerate}
The maps $e_i$ and $f_i$ are called the \emph{raising} and \emph{lowering} operators respectively.

\medskip
\noindent \textbf{Example:} The \emph{Standard type $A_{n-1}$ crystal} is $\mathbb{B} = \{1, 2, \cdots, n\}$ where
\begin{equation*}
f_k(l) = \left\{
    \begin{array}{ll}
         k+1 & \quad \text{if} \quad l=k  \\
         0 & \quad \text{otherwise}
    \end{array}     \right.
\end{equation*} 
Also, $wt(i) = \epsilon _i \in \mathbb{Z}^{n}$. This crystal can be depicted by the following ``crystal graph'':

\begin{figure}[h]
\begin{center}
    \begin{tikzpicture}
        \draw (0,0) rectangle (0.5,0.5);
        \node at (0.25, 0.25) {1};
        \draw[->] (0.9,0.25) -- (1.6, 0.25);
        \node at (1.2, 0.5) {\small 1};
        \draw (2,0) rectangle (2.5,0.5);
        \node at (2.25, 0.25) {2};
        \draw[->] (2.9,0.25) -- (3.6, 0.25);
        \node at (3.2, 0.5) {\small 2};
        \draw[dashed] (4,0.25) -- (5,0.25);
        \draw[->] (5.4,0.25) -- (6.1, 0.25);
        \node at (5.7, 0.5) {\small $n-1$};
        \draw (6.5,0) rectangle (7,0.5);
        \node at (6.75, 0.25) {$n$};
    \end{tikzpicture}
\end{center}
\end{figure}

The {\em crystal graph\/} associated to a type $A_{n-1}$ crystal $\mathcal{C}$ is an edge-coloured (the colours being $1, 2, \cdots n-1$) directed graph whose vertex set is the underlying set of the crystal. An edge with colour $k$ originates from $x \in \mathcal{C}$ and terminates at $y \in \mathcal{C}$ if and only if $f_k (x) = y$. We say a crystal is {\em connected\/} if its crystal graph is connected (viewed as an undirected graph).

A subset $\mathcal{C}'$ of a crystal $\mathcal{C}$ which is a union of connected components of $\mathcal{C}$ inherits a crystal structure from that of $\mathcal{C}$. In this case, we call $\mathcal{C}'$ a {\em full-subcrystal\/} of $\mathcal{C}$.
\subsection{Tensor Product of Crystals} \label{ss:tpcrystal}
\null \quad If $\mathcal{B}$ and $\mathcal{C}$ are type $A_{n-1}$ crystals, there is a natural notion of the tensor product crystal $\mathcal{B} \otimes \mathcal{C}$. As a set it is $\{x \otimes y: x \in \mathcal{B}, y \in \mathcal{C}\}$ (where $x\otimes y$ is just a symbol). We define $wt(x \otimes y)$ to be $wt(x) + wt(y)$. The raising and lowering operators are defined as follows:
\begin{equation}\label{e action}
 e_i(x \otimes y) = \left\{ 
    \begin{array}{ll}
        e_i(x) \otimes y & \quad \text{if} \quad \varepsilon_i(y) \leq \phi_i(x)  \\
        x \otimes e_i(y) & \quad \text{if} \quad \varepsilon_i(y) > \phi_i(x) 
    \end{array}   \right. 
\end{equation}
and 
\begin{equation}\label{f action}
 f_i(x \otimes y) = \left\{ 
    \begin{array}{ll}
        f_i(x) \otimes y & \quad \text{if} \quad \varepsilon_i(y) < \phi_i(x)  \\
        x \otimes f_i(y) & \quad \text{if} \quad \varepsilon_i(y) \geq \phi_i(x) 
    \end{array}   \right. 
\end{equation}
It is understood that $x \otimes 0 = 0 \otimes y = 0$.

We define the character of a crystal $\mathcal{C}$ to be $ch(\mathcal{C}) := \sum _{u \in \mathcal{C}} {\characx}^{wt(u)}$. From the definition of the tensor product crystal, it is elementary to observe that the character of the tensor product is equal to the product of the characters.  The tensor product is associative\footnote{The convention for tensor products in \cite{bump-sch} differs from the widely-used convention (that is also employed in this paper).} - see \cite[\S 2.3 and remark 1.1]{bump-sch}.

The set $\mathbb{B}^{\otimes k}$ is usually called the crystal of words (of length $k$). An element $\zeta$ of  $\mathbb{B}^{\otimes k}$ is said to be a {\em dominant word} if $e_i \zeta = 0 $ for all $i$.

\begin{remark}
	Let $\lambda, \mu \in \mathcal{P}[n]$ such that $|\lambda| - |\mu| = k$. Then $\Tab(\lambda / \mu, n)$ is given the structure of a type $A_{n-1}$ crystal by the following embedding into  $\mathbb{B}^{\otimes k}$ (where $b_T$ is the reverse-row reading word of $T$, defined in \S\ref{s:prelims}):
    $$T \mapsto b_T = w_1 w_2 \cdots w_k \mapsto w_1 \otimes w_2 \otimes \cdots \otimes w_k$$
The image under this embedding is a full-subcrystal of~$\mathbb{B}^{\otimes k}$ (see ~\cite[Section 3.1]{bump-sch}).
\end{remark}
\subsection{Demazure Crystals}
Let $\lambda \in \mathcal{P}[n]$ and $\omega$ a permutation in the symmetric group $\mathfrak{S}_n$. For any reduced expression $s_{i_1} s_{i_2} \cdots s_{i_p}$ of $\omega$, the {\em Demazure crystal\/} $\mathcal{B}_{\omega} (\lambda)$ is defined by:
\begin{equation} \label{eq:demcrys}
  \mathcal{B}_{\omega} (\lambda) := \{f_{i_1} ^{k_1} f_{i_2} ^{k_2} \cdots f_{i_p} ^{k_r}  T_{\lambda} ^0 : k_j \geq0 \} \textcolor{black}{\setminus \{0\}} \subset \Tab(\lambda, n)
\end{equation}
where $T_{\lambda} ^0$ is the unique {\em dominant} tableau of shape~$\lambda$ (i.e., with shape and weight both equal to $\lambda$).
\begin{remark}
\label{dom-in-dem}
For any $\omega \in \mathfrak{S}_n$ and $\lambda \in \mathcal{P}[n]$, $T_{\lambda} ^0$ is the unique element in $\mathcal{B}_{\omega}(\lambda)$ such that $e_i(T_{\lambda} ^0) = 0$ for all $i$. 
In \eqref{eq:demcrys} above, we could replace $T_{\lambda} ^0$ with any other dominant word $b_{\lambda} \in \mathbb{B}^{\otimes |\lambda|}$ of weight $\lambda$. We thereby
obtain a subset of $\mathbb{B}^{\otimes |\lambda|}$:
$$\mathcal{B}_{\omega} (b_\lambda) := \{f_{i_1} ^{k_1} f_{i_2} ^{k_2} \cdots f_{i_p} ^{k_r}  b_\lambda : k_j \geq0 \} \textcolor{black}{\setminus \{0\}}$$
which is isomorphic to $\mathcal{B}_{\omega} (\lambda)$ as crystals, i.e., there is a weight-preserving bijection between these sets which intertwines the crystal raising and lowering operators (where defined). We also refer to $\mathcal{B}_{\omega} (b_\lambda)$ as a Demazure crystal in what follows, and write (by abuse of notation) $\mathcal{B}_{\omega} (\lambda)$ in place of $\mathcal{B}_{\omega} (b_\lambda)$.

\end{remark}
The following proposition is the refined Demazure character formula in \cite{kash}.
\begin{proposition}    
Let $\lambda \in \mathcal{P}[n]$ and $\omega \in \mathfrak{S}_n$. Then, $ch(\mathcal{B}_{\omega}(\lambda)) = \key_{\omega\lambda}$.
\end{proposition}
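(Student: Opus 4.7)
I would prove this by induction on the length $\ell(\omega)$. For the base case $\omega = e$, the set $\mathcal{B}_e(\lambda) = \{T_\lambda^0\}$ is a singleton of weight $\lambda$, and since $\lambda$ is a partition, $\key_\lambda = \characx^\lambda = ch(\mathcal{B}_e(\lambda))$. For the inductive step, suppose $\omega \neq e$ and fix a simple reflection $s_i$ with $\ell(s_i\omega) < \ell(\omega)$; write $\omega' = s_i\omega$. Since every reduced expression for $\omega$ may be chosen to begin with $s_i$, the definition \eqref{eq:demcrys} rearranges to
$$\mathcal{B}_\omega(\lambda) = \{f_i^k b : b \in \mathcal{B}_{\omega'}(\lambda),\, k \geq 0\} \setminus \{0\}.$$
Independence of \eqref{eq:demcrys} from the choice of reduced expression is part of the standard theory of Demazure crystals, which I would take as given.

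The core of the argument is the character identity
$$ch\bigl(\{f_i^k a : a \in \mathcal{A},\, k \geq 0\} \setminus \{0\}\bigr) = T_i(ch(\mathcal{A})),$$
valid for any finite subset $\mathcal{A}$ of a type $A_{n-1}$ crystal satisfying $e_i(\mathcal{A}) \subseteq \mathcal{A} \cup \{0\}$ (a property I will call being \emph{$e_i$-closed}). To prove this identity, decompose $\mathcal{A}$ into its non-empty intersections with the $i$-strings of the ambient crystal; the $e_i$-closedness forces each such intersection to be a top-segment of the form $\{b, f_ib, \ldots, f_i^j b\}$ with $e_i b = 0$. A direct computation using the monomial formula $T_i(\characx^\mu) = \sum_{m=0}^{\mu_i-\mu_{i+1}} \characx^{\mu-m(\epsilon_i-\epsilon_{i+1})}$ for $\mu_i \geq \mu_{i+1}$, combined with telescoping cancellation across consecutive segment terms, shows that $T_i$ sends the character of any top-segment to the character of the complete $i$-string. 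Summing over all $i$-strings yields the identity.

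Assuming further that $\mathcal{B}_{\omega'}(\lambda)$ is itself $e_i$-closed, the identity and the inductive hypothesis yield
$$ch(\mathcal{B}_\omega(\lambda)) = T_i(ch(\mathcal{B}_{\omega'}(\lambda))) = T_i(\key_{\omega'\lambda}) = \key_{s_i\omega'\lambda} = \key_{\omega\lambda},$$
where the penultimate equality uses the standard key-polynomial identity $T_i(\key_\alpha) = \key_{s_i\alpha}$ when $\alpha_i \geq \alpha_{i+1}$. Applied to $\alpha = \omega'\lambda$, this hypothesis holds because $\ell(s_i\omega') > \ell(\omega')$ forces $\omega'^{-1}(i) < \omega'^{-1}(i+1)$, whence $(\omega'\lambda)_i = \lambda_{\omega'^{-1}(i)} \geq \lambda_{\omega'^{-1}(i+1)} = (\omega'\lambda)_{i+1}$. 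The principal obstacle is the $e_i$-closedness of $\mathcal{B}_{\omega'}(\lambda)$, which is not visible directly from \eqref{eq:demcrys} (the defining formula features only lowering operators). Its proof typically requires either an interleaved induction carried out in parallel with the character formula above, or an appeal to deeper structure such as Littelmann's path model or Kashiwara's global basis.
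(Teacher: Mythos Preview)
Your outline is correct and follows the standard inductive proof of the Demazure character formula. The paper, however, does not give a proof of this proposition at all: it simply attributes the result to Kashiwara \cite{kash} and moves on. So there is no ``paper's own proof'' to compare against; you are supplying what the paper chooses to cite.

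A few remarks on how your sketch interfaces with the paper. The $e_i$-closedness of $\mathcal{B}_{\omega'}(\lambda)$ that you correctly flag as the main obstacle is in fact a consequence of the \emph{string property} stated (again without proof, citing \cite[Proposition 3.3.5]{kash}) as Proposition~\ref{a} in the paper. So within the paper's logical framework, your missing ingredient is available as a black box, and your inductive argument then goes through cleanly. Your character identity $ch(\mathfrak{D}_i\mathcal{A}) = T_i(ch(\mathcal{A}))$ for $e_i$-closed $\mathcal{A}$ is exactly Kashiwara's Lemma~3.3.4, and the telescoping verification you describe is the standard one. The only point where you could tighten the exposition is the final key-polynomial step: rather than checking $(\omega'\lambda)_i \geq (\omega'\lambda)_{i+1}$, it is slightly more direct to note that $T_i \key_{\omega'\lambda} = T_i T_{\omega'} \characx^\lambda = T_{s_i\omega'} \characx^\lambda = \key_{\omega\lambda}$ holds by definition of $\key$ once you pick a reduced word for $\omega$ starting with $s_i$, with well-definedness absorbed into the statement that $T_w$ depends only on $w$.
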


\noindent \textbf{Examples:}
\begin{itemize}
    \item $\mathcal{B}_{\omega_0}(\lambda) = \Tab(\lambda, n)$. This crystal is denoted as $B(\lambda)$ in the literature.
    \item $\mathcal{B}_{s_{k-1} \cdots s_2s_1}((1))$ = $\Tab((1), k) \subset \Tab((1), n)$ for $1 \leq k \leq n$. We will denote this Demazure crystal by $\mathbb{B}_k$.
\end{itemize}
A subset $\mathcal{S}$ of a crystal $\mathcal{C}$ is said to have the \emph{string property} if $\forall i \in \{1, 2, \cdots, n-1\}$ and $\forall x \in \mathcal{S}$ such that $e_i (x) \neq 0$, we have
\begin{enumerate}
    \item $e_i (x) \in \mathcal{S},$
    \item $f_i (x) \neq 0$ implies $f_i (x) \in \mathcal{S}$
\end{enumerate}
\begin{proposition}
\label{a}
~\cite[Proposition 3.3.5]{kash}
    For any $\lambda \in \mathcal{P}[n]$ and $\omega \in \mathfrak{S}_n$, the Demazure subcrystal $\mathcal{B}_{\omega} (\lambda)$ of $\Tab(\lambda, n)$ has the string property.
\end{proposition}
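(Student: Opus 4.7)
The plan is to prove the string property by induction on $\ell(\omega)$. The base case $\omega = e$ is immediate: $\mathcal{B}_e(\lambda) = \{T_\lambda^0\}$, and since $e_i(T_\lambda^0) = 0$ for every $i$, the hypothesis of the string property is vacuously satisfied. For the inductive step, I would choose a simple reflection $s_i$ with $\ell(s_i\omega) = \ell(\omega)-1$, set $\omega' = s_i\omega$, and use the recursive description
\[
\mathcal{B}_\omega(\lambda) = \{f_i^k y : y \in \mathcal{B}_{\omega'}(\lambda),\; k \geq 0\} \setminus \{0\}.
\]
Given the inductive hypothesis for $\mathcal{B}_{\omega'}(\lambda)$, the task reduces to the abstract claim: if $S \subset \Tab(\lambda, n)$ has the string property, then so does $\mathcal{D}_i(S) := \{f_i^k x : x \in S,\; k \geq 0\} \setminus \{0\}$.

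The direction $j = i$ is purely formal. Let $\sigma$ be an arbitrary $i$-string in $\Tab(\lambda, n)$. The string property for $S$ forces $S \cap \sigma$ to be either empty, the top element alone, or all of $\sigma$, and in each case a direct check shows that $\mathcal{D}_i(S) \cap \sigma$ is either empty or the whole of $\sigma$. Both closure conditions for $i$ follow at once.

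The main obstacle is the case $j \neq i$, where the adjoined $i$-string elements must interact cleanly with $j$-strings. My plan is to work inside the tensor product embedding $\Tab(\lambda, n) \hookrightarrow \mathbb{B}^{\otimes |\lambda|}$ of \S\ref{ss:tpcrystal} and use the explicit signature rule for $e_j, f_j$. When $|i - j| \geq 2$, the operators $f_i$ and $e_j, f_j$ commute on elements where both are defined, so $\varepsilon_j(f_i^k y) = \varepsilon_j(y)$ and $\phi_j(f_i^k y) = \phi_j(y)$; thus $e_j$ and $f_j$ simply pass through $f_i^k$ and the string property for $S$ transfers directly. The harder sub-case is $|i-j| = 1$, governed by an $A_2$-type local pattern: given $x = f_i^k y \in \mathcal{D}_i(S)$ with $y \in S$ and $e_j(x) \neq 0$, one must produce $y' \in S$ and $k' \geq 0$ such that $e_j(x) = f_i^{k'} y'$, and analogously for $f_j(x)$. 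This comes down to careful bookkeeping of the $i$- and $j$-signatures on the underlying tensor factors, combined with the string property of $S$ to locate $y'$. Alternatively, one can bypass this combinatorial analysis by invoking Kashiwara's original argument, which establishes the result for Demazure modules over the quantized enveloping algebra via the compatibility of the global crystal basis with each $U_q(\mathfrak{sl}_2)$-action.
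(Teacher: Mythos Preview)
The paper does not prove this proposition; it is quoted directly from Kashiwara. Your closing ``alternative'' of invoking Kashiwara's global-basis argument is therefore exactly the route the paper takes.

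The combinatorial reduction you sketch before that, however, breaks down. The abstract claim at its heart---that if $S \subset \Tab(\lambda,n)$ has the string property then so does $\mathcal{D}_i(S)$---is \emph{false}. Take $n=3$, $\lambda=(2,1,0)$, and let $S$ be the four-element set consisting of the highest-weight tableau $T_\lambda^0$ together with the full $1$-string through $f_2 T_\lambda^0$ (three further tableaux, of weights $(2,0,1)$, $(1,1,1)$, $(0,2,1)$). One checks directly that $S$ has the string property for both $j=1$ and $j=2$, yet $S$ is not a Demazure crystal. Now $\mathcal{D}_2(S)$ adjoins only the lowest-weight tableau $H$ of weight $(0,1,2)$, since $f_2$ already kills every element of $S$ except the weight-$(0,2,1)$ tail of that $1$-string. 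But $e_1(H)\neq 0$ is the tableau of weight $(1,0,2)$, which does not lie in $\mathcal{D}_2(S)$; hence the string property fails for $\mathcal{D}_2(S)$.

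Thus the string property alone is too weak an inductive hypothesis to survive the recursion $\mathcal{B}_\omega(\lambda)=\mathcal{D}_i\bigl(\mathcal{B}_{s_i\omega}(\lambda)\bigr)$, and the $|i-j|=1$ sub-case cannot be closed by signature bookkeeping at that level of generality---the statement you would need is simply not true. Kashiwara's proof works because it carries a stronger package of properties (compatibility of the Demazure module with the global crystal basis under each $U_q(\mathfrak{sl}_2)$) through the induction; this is why a direct citation, rather than an elementary combinatorial argument, is the standard treatment.
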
 
\begin{remark}
    The converse of proposition \ref{a} is not true in general (see ~\cite{sami}).  
\end{remark}
A characterization of when a tensor product of Demazure crystals decomposes into Demazure crystals was given in \cite{kuono}. Following \cite{kuono}, a different characterization was obtained in \cite{sami} as follows:
\begin{theorem}
    
    \label{sami-thrm}
    \cite[Theorem 1.2]{sami} For $\lambda, \mu \in \mathcal{P}[n]$ and $\omega, \tau \in \mathfrak{S}_n$, the subset $\mathcal{B}_{\omega} (\lambda) \otimes \mathcal{B}_{\tau} (\mu)$ of $\Tab(\lambda, n) \otimes \Tab(\mu, n)$ is a disjoint union of Demazure crystals if and only if $\mathcal{B}_{\omega} (\lambda) \otimes \mathcal{B}_{\tau} (\mu)$ has the string property.
\end{theorem}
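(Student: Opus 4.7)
The forward direction is immediate: by Proposition~\ref{a} every Demazure crystal has the string property, and since the string property is a local condition quantified over single elements, it passes to any disjoint union. So I would concentrate on the reverse implication, which carries the real content. The broad plan is to analyse $\mathcal{S} := \mathcal{B}_{\omega}(\lambda) \otimes \mathcal{B}_{\tau}(\mu)$ component by component inside the ambient tensor-product crystal $\Tab(\lambda,n) \otimes \Tab(\mu,n)$.

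Decompose the ambient crystal into its connected components $C_1, \ldots, C_m$ (the usual Littlewood--Richardson decomposition): each $C_j \cong B(\nu_j)$ for some partition $\nu_j$, with a unique dominant element $h_j$. Set $\mathcal{S}_j := \mathcal{S} \cap C_j$; note that $\mathcal{S}_j$ inherits the string property from $\mathcal{S}$. First, every non-empty $\mathcal{S}_j$ contains $h_j$: starting at any $x \in \mathcal{S}_j$, apply any sequence of raising operators $e_i$ that lifts $x$ to $h_j$ inside $C_j$; clause~(1) of the string property keeps each intermediate vertex in $\mathcal{S}$, so $h_j \in \mathcal{S}_j$. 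The task now reduces to showing that each non-empty $\mathcal{S}_j$ is a single Demazure subcrystal $\mathcal{B}_{\sigma_j}(\nu_j) \subset C_j$ for an appropriate $\sigma_j \in \mathfrak{S}_n$.

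This last step is the main obstacle, because (as flagged in the remark after Proposition~\ref{a}) string-closure inside $B(\nu)$ does not by itself characterise Demazure subcrystals; the extra input must come from the tensor-product shape of $\mathcal{S}$. The cleanest route I see is to invoke the Kouno--Naoi criterion of \cite{kuono}, which gives an intrinsic sufficient condition for a tensor product of Demazure crystals to decompose as a disjoint union of Demazure crystals, and then to verify that the string property implies this condition. Concretely, one works locally at each $x \otimes y \in \mathcal{S}$: the signature rule \eqref{e action}--\eqref{f action} pins down which tensor factor $e_i$ and $f_i$ act on in terms of $\varepsilon_i(y)$ and $\phi_i(x)$, and the string property should be readable as forcing these signatures to be compatible with the Demazure truncations cut out by $\omega$ and $\tau$.

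As a sanity check and a possible fallback, the base cases are easy: if $\omega = e$ then $\mathcal{B}_{e}(\lambda) = \{T_{\lambda}^{0}\}$ is a singleton and $\{T_{\lambda}^{0}\} \otimes \mathcal{B}_{\tau}(\mu)$ is essentially just $\mathcal{B}_{\tau}(\mu)$ with shifted weight; similarly for $\tau = e$ or $\omega = \tau = w_0$. This suggests an alternative induction on $\ell(\omega) + \ell(\tau)$, writing $\omega = s_k \omega'$ with $\ell(\omega) = \ell(\omega') + 1$ and $\mathcal{B}_{\omega}(\lambda) = \bigcup_{j \geq 0} f_k^{\,j}\, \mathcal{B}_{\omega'}(\lambda)$, and transferring the string-property hypothesis to $\mathcal{B}_{\omega'}(\lambda) \otimes \mathcal{B}_{\tau}(\mu)$ before reassembling. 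The most delicate point throughout is ensuring that the decomposition provided (either by Kouno--Naoi or inductively) is a \emph{disjoint} union of Demazure crystals rather than merely a covering, which requires a careful accounting of the extremal vertices $h_j$ against the $f_k$-strings used in the induction.
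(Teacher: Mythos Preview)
The paper does not prove this theorem: it is quoted verbatim as \cite[Theorem~1.2]{sami} and used as a black box. There is therefore no ``paper's proof'' to compare your proposal against; the paper only applies the result (in Corollary~\ref{cor-for-sami} and Proposition~\ref{extension of main theorem}).

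As for the proposal itself, the skeleton is sound: the forward direction is indeed immediate from Proposition~\ref{a}, and for the converse the reduction to each connected component $\mathcal{S}_j \subset C_j \cong B(\nu_j)$, together with the observation that $h_j \in \mathcal{S}_j$ via repeated $e_i$'s, is correct and standard. But the proposal stops exactly where the real difficulty begins. You correctly flag that the string property alone does not force $\mathcal{S}_j$ to be a Demazure crystal (this is the content of the remark after Proposition~\ref{a}), and then offer two ways forward, neither of which is carried out. Saying ``invoke the Kouno--Naoi criterion of \cite{kuono} and verify that the string property implies it'' is not a proof but a reduction to another cited theorem, and you do not actually perform the verification; the phrase ``the string property should be readable as forcing these signatures to be compatible'' is precisely the step that needs an argument. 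The alternative induction on $\ell(\omega)+\ell(\tau)$ is likewise only gestured at: transferring the string hypothesis from $\mathcal{B}_{\omega}(\lambda)\otimes\mathcal{B}_{\tau}(\mu)$ down to $\mathcal{B}_{\omega'}(\lambda)\otimes\mathcal{B}_{\tau}(\mu)$ is not automatic (Proposition~\ref{lemma for sami} strips a factor on the right, not a simple reflection on the left), and the ``reassembly'' step is where Assaf's actual argument does nontrivial work. In short, you have located the crux accurately but have not supplied the missing idea.
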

    Subsets of crystals that exhibit the string property are referred to as {\em extremal} in \cite{sami}.
The following proposition is a strengthening of \cite[Proposition 8.1]{sami}. 
\begin{proposition}
     \label{lemma for sami}Let $X_1$ and $X_2$ be subsets of crystals $C_1$ and $C_2$ respectively. Assume that $\forall i \in \{1,2, \cdots, n-1\}$ $\exists x_i^{\dagger} \in X_2$ such that $e_i(x_i^{\dagger}) = 0$ (i.e., for each $i$, $X_2$ contains a head of some $i$-string). If $X_1 \otimes X_2$ has the string property, then $X_1$ has the string property.
\end{proposition}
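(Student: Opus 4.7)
The plan is to select, for each $i \in \{1, \ldots, n-1\}$, an element $x_i^\dagger \in X_2$ with $e_i(x_i^\dagger) = 0$ as provided by the hypothesis, and to transfer the string property from $x \otimes x_i^\dagger \in X_1 \otimes X_2$ down to $x \in X_1$. The driving observation is that $e_i(x_i^\dagger) = 0$ is equivalent to $\varepsilon_i(x_i^\dagger) = 0$, which collapses the tensor-product rules \eqref{e action} and \eqref{f action} so that $e_i$ and $f_i$ act on $x \otimes x_i^\dagger$ purely through the left tensor factor.

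Concretely, given $x \in X_1$ with $e_i(x) \neq 0$, the inequality $\varepsilon_i(x_i^\dagger) = 0 \leq \phi_i(x)$ forces $e_i(x \otimes x_i^\dagger) = e_i(x) \otimes x_i^\dagger$, which is nonzero. Clause (1) of the string property of $X_1 \otimes X_2$ then places $e_i(x) \otimes x_i^\dagger$ in $X_1 \otimes X_2$, and I would conclude $e_i(x) \in X_1$ by appealing to the fact that elements of $C_1 \otimes C_2$ are formal symbols $a \otimes b$ admitting a unique factorization. If moreover $f_i(x) \neq 0$, then $\phi_i(x) > 0 = \varepsilon_i(x_i^\dagger)$, so $f_i(x \otimes x_i^\dagger) = f_i(x) \otimes x_i^\dagger \neq 0$; clause (2) of the string property applied to $x \otimes x_i^\dagger$, followed by the same unique-factorization step, yields $f_i(x) \in X_1$. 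Ranging over $i$ completes the verification of both string conditions for $X_1$.

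I do not expect a genuine obstacle here: the whole argument rides on the observation that a head of an $i$-string in the right tensor factor is ``$i$-transparent'' for the tensor operators, which is precisely the role the hypothesis on $X_2$ is engineered to play. The strengthening over \cite[Proposition 8.1]{sami} amounts to recognizing that a single $i$-head in $X_2$ for each $i$ suffices, rather than requiring $X_2$ itself to enjoy a stronger closure property such as the string property.
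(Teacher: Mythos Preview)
Your proposal is correct and follows essentially the same argument as the paper: tensor $x$ with the $i$-string head $x_i^{\dagger}$, use $\varepsilon_i(x_i^{\dagger})=0$ to force both $e_i$ and $f_i$ to act on the left factor, and read off membership in $X_1$ from the string property of $X_1\otimes X_2$. The only cosmetic difference is that you make the equality $\varepsilon_i(x_i^{\dagger})=0$ and the inequality $\phi_i(x)>0$ (from $f_i(x)\neq 0$) explicit, whereas the paper leaves these implicit.
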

\begin{proof}
    Let $i \in \{1,2, \cdots, n-1\}$ be arbitrary. Suppose $u \in X_1$ such that $e_i(u) \neq 0$. 
    Then $e_i (u \otimes x_i^{\dagger}) = e_i(u) \otimes x_i^{\dagger} \neq 0 \text{ since } \varepsilon_i(x_i^{\dagger}) \leq \phi_i(u)$. Therefore by the string property of $X_1 \otimes X_2$ we have  $e_i(u) \otimes x_i^{\dagger} \in X_1 \otimes X_2$, which implies that $e_i(u) \in X_1$.
    
    Additionally, if $f_i (u) \neq 0$ then $f_i (u\otimes x_i^{\dagger}) = f_i(u) \otimes x_i^{\dagger} \neq 0$ since $ \varepsilon_i(x_i^{\dagger}) < \phi_i(u) $. Therefore by the string property of $X_1 \otimes X_2$ we have $f_i(u)\otimes x_i^{\dagger} \in X_1 \otimes X_2$, which implies that $f_i(u) \in X_1$. 
\end{proof}
\begin{corollary}
\label{cor-for-sami}
    Let $D_1, D_2, \cdots, D_k$ be Demazure crystals. If $D_1 \otimes D_2 \otimes \cdots \otimes D_k$ has the string property, then it is a disjoint union of Demazure crystals.
\end{corollary}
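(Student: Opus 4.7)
The plan is to induct on $k$. The base case $k=1$ is vacuous and $k=2$ is precisely Theorem~\ref{sami-thrm}, so I assume $k \geq 3$ and that the result holds for $(k-1)$-fold tensor products of Demazure crystals.

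Given that $D_1 \otimes \cdots \otimes D_k = (D_1 \otimes \cdots \otimes D_{k-1}) \otimes D_k$ has the string property, I first pull this property down to $D_1 \otimes \cdots \otimes D_{k-1}$: the dominant tableau in $D_k$ (see Remark~\ref{dom-in-dem}) is annihilated by every $e_i$, so $D_k$ contains a head of every $i$-string. Proposition~\ref{lemma for sami} then yields that $D_1 \otimes \cdots \otimes D_{k-1}$ has the string property, and by the inductive hypothesis,
\[
D_1 \otimes \cdots \otimes D_{k-1} = \bigsqcup_\alpha E_\alpha,
\]
with each $E_\alpha$ a Demazure crystal.

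Rebracketing gives $D_1 \otimes \cdots \otimes D_k = \bigsqcup_\alpha (E_\alpha \otimes D_k)$, and the plan is to apply Theorem~\ref{sami-thrm} to each summand, which amounts to verifying the string property of $E_\alpha \otimes D_k$. For $x \otimes y \in E_\alpha \otimes D_k$ with $e_i(x \otimes y) \neq 0$, a standard case split on the tensor product rule~\eqref{e action}--\eqref{f action} reduces closure under $e_i$ and $f_i$ to the individual string properties of $E_\alpha$ and $D_k$ (both Demazure, hence satisfying Proposition~\ref{a}), except in one borderline case: when $\varepsilon_i(y) = \phi_i(x) = 0$ and $f_i(y) \neq 0$, we have $f_i(x \otimes y) = x \otimes f_i(y)$, and since $e_i(y) = 0$, the string property of $D_k$ does not directly give $f_i(y) \in D_k$. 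Here I would invoke the hypothesized string property of the full tensor $D_1 \otimes \cdots \otimes D_k$ applied to $x \otimes y$: it yields $x \otimes f_i(y) \in D_1 \otimes \cdots \otimes D_k$, which forces $f_i(y) \in D_k$ and hence $f_i(x \otimes y) \in E_\alpha \otimes D_k$.

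With the string property of each $E_\alpha \otimes D_k$ in hand, Theorem~\ref{sami-thrm} decomposes each summand into a disjoint union of Demazure crystals, and taking the union over $\alpha$ completes the inductive step. I expect the main obstacle to be the borderline case in the verification above: it is the one point in the case analysis where the individual string properties of the tensor factors $E_\alpha$ and $D_k$ are insufficient, and one must genuinely appeal to the global string property of the full $k$-fold tensor that is hypothesized.
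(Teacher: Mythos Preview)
Your argument is correct and follows essentially the same route as the paper: induct on $k$, use Proposition~\ref{lemma for sami} (with the highest-weight element of $D_k$ supplying the required $i$-string heads) to pull the string property down to the $(k-1)$-fold tensor, decompose by induction, and then verify the string property on each piece $E_\alpha \otimes D_k$ in order to apply Theorem~\ref{sami-thrm}.

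The only stylistic difference lies in the verification of $f_i$-closure for $E_\alpha \otimes D_k$. You carry out an explicit case analysis on the tensor-product rule and isolate the single borderline configuration $\varepsilon_i(y)=\phi_i(x)=0$ where the individual string properties of $E_\alpha$ and $D_k$ do not suffice, and there invoke the global string property of $D_1\otimes\cdots\otimes D_k$ to force $f_i(y)\in D_k$. The paper instead handles all cases uniformly: it (implicitly) uses the global string property to place $f_i(x\otimes y)$ somewhere in $\bigsqcup_\beta (E_\beta\otimes D_k)$, then observes that $e_i(f_i(x\otimes y))=x\otimes y$ together with $e_i$-closure of each $E_\beta\otimes D_k$ and disjointness of the decomposition pins down $\beta=\alpha$. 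Both arguments use the hypothesized global string property at exactly the same essential point; yours makes that dependence more transparent, while the paper's is more compact.
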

\begin{proof}
    We prove by induction on $k$. The case $k =1$ is straight forward. Suppose $k > 1$. Then it follows from proposition \ref{lemma for sami} that $D_1 \otimes D_2 \otimes \cdots \otimes D_{k-1}$ has string property since $D_k$ has $x_i ^{\dagger}$ such that $ e_i(x_i^{\dagger})=0 $ $ \forall i $. Therefore by induction hypothesis $D_1 \otimes D_2 \otimes \cdots \otimes D_{k-1}$ is a disjoint union of Demazure crystals (say = $\bigsqcup \Tilde{D}$). Now, $$D_1 \otimes D_2 \otimes \cdots \otimes D_k = \bigsqcup (\Tilde{D} \otimes D_k)$$ 
    Observe that if $i \in \{1, 2, \cdots, n-1\}$ and $x\otimes y \in \Tilde{D} \otimes D_k$ then $e_i(x \otimes y) \in (\Tilde{D} \otimes D_k) \cup \{0\}$ because of proposition \ref{a}.

    Let $x \otimes y \in \Tilde{D} \otimes D_k$ such that $e_i (x \otimes y) \neq 0$ and $f_i(x \otimes y) \neq 0.$ It follows that $f_i(x \otimes y) \in \Tilde{D}\otimes D_k$ because $e_i(f_i(x \otimes y)) = x \otimes y$ and the fact that the decomposition $D_1 \otimes D_2 \otimes \cdots \otimes D_{k-1} = \bigsqcup \Tilde{D}$ is disconnected. Therefore $\Tilde{D} \otimes D_k$ has string property. By theorem \ref{sami-thrm}, it now follows that $\Tilde{D} \otimes D_k$ is a disjoint union of Demazure crystals. 
\end{proof}
Let $\Phi$ be a flag and $\rho \in \mathbb{Z}_+ ^n$ be a composition of $k \in \mathbb Z _+$ (i.e., $\rho_1 + \rho_2 + \cdots + \rho_n = k$). Define the subset of $\mathbb{B}^{\otimes k}$: $$\mathbb{B}_{\Phi}  ^{\rho}:= \mathbb{B}_{\Phi_1} ^{\otimes \rho_1} \otimes \mathbb{B}_{\Phi_2} ^{\otimes \rho_2} \otimes \cdots \otimes \mathbb{B}_{\Phi_n}^{\otimes \rho_n}$$
\begin{lemma}\label{b rho lemma}    $\mathbb{B}_{\Phi} ^{\rho}$ is a disjoint union of Demazure crystals.
\end{lemma}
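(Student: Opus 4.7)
The plan is to apply Corollary~\ref{cor-for-sami}. Since each $\mathbb{B}_k$ is itself the Demazure crystal $\mathcal{B}_{s_{k-1}\cdots s_1}((1))$, as noted in the examples following the refined Demazure character formula, the set $\mathbb{B}_{\Phi}^{\rho}$ is already presented as a tensor product of Demazure crystals. It therefore suffices to verify that $\mathbb{B}_{\Phi}^{\rho}$ has the string property inside the ambient crystal $\mathbb{B}^{\otimes k}$, where $k := \rho_1 + \cdots + \rho_n$.

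To set this up, I will index positions $1 \leq j \leq k$ by the block they lie in, writing $\Phi(j) := \Phi_m$ when position $j$ belongs to the $m$-th tensor slot. Because $\Phi$ is a flag, the function $j \mapsto \Phi(j)$ is weakly increasing; and $u = u_1 \otimes \cdots \otimes u_k$ lies in $\mathbb{B}_{\Phi}^{\rho}$ exactly when $u_j \leq \Phi(j)$ for every $j$. Now let $u \in \mathbb{B}_{\Phi}^{\rho}$ with $e_i(u) \neq 0$. The standard signature rule for words (a direct consequence of the tensor product formulas \eqref{e action}--\eqref{f action}) identifies the positions at which $e_i$ and $f_i$ act: associate the sign $+$ to each letter $u_j = i+1$ and $-$ to each letter $u_j = i$, iteratively cancel adjacent $-+$ pairs, and the surviving signs form a block of the form $+^a -^b$. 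Then $e_i$ acts on the position $p$ of the rightmost surviving $+$, sending $u_p = i+1$ to $i$; and $f_i$, when nonzero, acts on the position $q$ of the leftmost surviving $-$, sending $u_q = i$ to $i+1$. The essential observation is that $p < q$.

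The chain $i+1 \leq \Phi(p) \leq \Phi(q)$, which follows from $u_p = i+1 \leq \Phi(p)$ together with $p < q$ and the monotonicity of $\Phi$, shows that both $e_i(u)$ and (when it is nonzero) $f_i(u)$ continue to respect the flag bounds and so lie in $\mathbb{B}_{\Phi}^{\rho}$. This verifies the string property, and Corollary~\ref{cor-for-sami} then yields the asserted decomposition into Demazure crystals. The only step that goes beyond formal manipulations is the signature-rule inequality $p < q$; once this is in hand, the monotonicity of the flag automatically propagates flag admissibility from position $p$ to position $q$, closing the argument.
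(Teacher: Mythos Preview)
Your proof is correct and follows essentially the same approach as the paper: reduce to Corollary~\ref{cor-for-sami} and verify the string property by showing that $f_i$ acts at a position strictly to the right of where $e_i$ acts, then use the weak increase of $\Phi$ to propagate the flag bound. The only cosmetic difference is that the paper establishes this positional inequality by splitting the tensor product at the $e_i$-position and applying the rules \eqref{e action}--\eqref{f action} directly, whereas you phrase it via the signature rule; these are equivalent.
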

\begin{proof}
    By corollary \ref{cor-for-sami}, it is sufficient to show that $\mathbb{B}_{\Phi}^{\rho}$ has string property.

    Let $u = u_1 \otimes u_2 \otimes \cdots \otimes u_k \in \mathbb{B}_{\Phi}^{\rho}$. Suppose that $e_i(u) \neq 0$. Then $e_i(u) \in \mathbb{B}_{\Phi}^{\rho}$, because of proposition \ref{a} and the fact that $\mathbb{B}_{\Phi_i}$'s are Demazure crystals.

    Now suppose furthermore that $f_i(u) \neq 0$. Let $t$ be the index where $e_i$ acts in $u$. i.e., 
    \begin{align*}
      u = u_1 \otimes \cdots \otimes u_{t-1} \otimes (i+1) \otimes u_{t+1} \otimes  \cdots \otimes u_k\\
      e_i(u) = u_1 \otimes \cdots \otimes u_{t-1} \otimes i \otimes u_{t+1} \otimes \cdots \otimes u_k
    \end{align*}

    Define $u' = u_1 \otimes \cdots \otimes u_{t-1}$ and $u'' = (i+1) \otimes u_{t+1} \otimes \cdots \otimes u_k$. Then by (\ref{e action}) it follows that $\varepsilon_i(u'') > \phi_i(u')$. Therefore, (\ref{f action}) implies $f_i (u) = u' \otimes f_i(u'')$. The action of $f_i$ on $u''$ amounts to changing a $u_{t_0}$ ($t_0>t$) which is an $i$ to an $i+1$. But since $\Phi_t \geq i+1$ (by definition of $u$) and $\Phi$ is weakly increasing, it follows that $f_i(u) \in \mathbb{B}_{\Phi}^{\rho}$.
\end{proof}
\begin{theorem}
\label{crystal-thrm}
    Let $\Phi$ be a flag and $\mu, \gamma \in \mathcal{P}[n]$ such that $\mu \subset \gamma$. Then $\Tab(\mu / \gamma, \Phi)$ is a disjoint union of Demazure crystals.
\end{theorem}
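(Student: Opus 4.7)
My plan is to realize $\Tab(\mu/\gamma,\Phi)$ as an intersection of two well-understood subsets of the word crystal $\mathbb{B}^{\otimes k}$, where $k=|\mu|-|\gamma|$, and then exploit Lemma~\ref{b rho lemma}. Set $\rho=(\mu_1-\gamma_1,\mu_2-\gamma_2,\ldots,\mu_n-\gamma_n)\in\mathbb{Z}_+^n$, a composition of $k$. Under the reverse-row reading word embedding $T\mapsto b_T$, the $\rho_i$ entries coming from row $i$ of $T$ occupy the tensor slots $\rho_1+\cdots+\rho_{i-1}+1,\ldots,\rho_1+\cdots+\rho_i$. The flag condition that entries in row $i$ are at most $\Phi_i$ therefore translates exactly into the condition $b_T\in\mathbb{B}_{\Phi_1}^{\otimes\rho_1}\otimes\cdots\otimes\mathbb{B}_{\Phi_n}^{\otimes\rho_n}=\mathbb{B}_{\Phi}^{\rho}$. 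Since (by the remark in \S\ref{ss:tpcrystal}) the embedding already realizes $\Tab(\mu/\gamma,n)$ as a full-subcrystal of $\mathbb{B}^{\otimes k}$, I obtain the clean identification
$$\Tab(\mu/\gamma,\Phi)\;=\;\Tab(\mu/\gamma,n)\cap\mathbb{B}_{\Phi}^{\rho}.$$

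Next, by Lemma~\ref{b rho lemma}, write $\mathbb{B}_{\Phi}^{\rho}=\bigsqcup_j D_j$ as a disjoint union of Demazure crystals. The key observation is that each $D_j$ is connected in the crystal graph of the ambient $\mathbb{B}^{\otimes k}$: tracking through the inductive proof of Corollary~\ref{cor-for-sami}, each $D_j$ is obtained from a single dominant word by successive applications of ambient lowering operators $f_{i_1},f_{i_2},\ldots$, and each such application is an edge in the ambient crystal graph. On the other hand $\Tab(\mu/\gamma,n)$ is, by definition of full-subcrystal, a union of connected components of $\mathbb{B}^{\otimes k}$. Hence for each $j$, the connected set $D_j$ either lies entirely inside $\Tab(\mu/\gamma,n)$ or is disjoint from it.

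Combining these two steps,
$$\Tab(\mu/\gamma,\Phi)\;=\;\bigsqcup_{j\,:\,D_j\subset\Tab(\mu/\gamma,n)} D_j,$$
which exhibits $\Tab(\mu/\gamma,\Phi)$ as a disjoint union of Demazure crystals. (The hypothesis in the theorem statement should be read as $\gamma\subset\mu$; otherwise $\Tab(\mu/\gamma,\Phi)$ is empty by convention and the conclusion is vacuous.) I expect no serious obstacle: the only things to check carefully are the set-theoretic identification $\Tab(\mu/\gamma,\Phi)=\Tab(\mu/\gamma,n)\cap\mathbb{B}_{\Phi}^{\rho}$, which is immediate from the block structure of $b_T$, and the connectedness of Demazure crystals in the ambient graph, which follows directly from their inductive construction via $f_i$'s.
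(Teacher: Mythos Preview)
Your proof is correct and follows essentially the same approach as the paper: both establish the identity $\Tab(\mu/\gamma,\Phi)=\Tab(\mu/\gamma,n)\cap\mathbb{B}_{\Phi}^{\rho}$ and then invoke Lemma~\ref{b rho lemma} together with the fact that $\Tab(\mu/\gamma,n)$ is a full subcrystal of $\mathbb{B}^{\otimes k}$. Your explicit connectedness argument for the Demazure pieces $D_j$ merely spells out what the paper leaves implicit, and your observation that the hypothesis should read $\gamma\subset\mu$ catches a typo in the statement.
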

\begin{proof}
Define the composition $\rho$ by $\rho_i = \mu_i - \gamma_i, \hspace{0.2cm} 1 \leq i \leq n$. Let $k = |\rho| = \rho_1 + \cdots + \rho_n$. Since $\Tab(\mu / \gamma, n)$ is a full subcrystal of $\mathbb{B}^{\otimes k}$, the theorem follows from lemma \ref{b rho lemma} because $$\Tab(\mu / \gamma, \Phi) = \Tab(\mu / \gamma, n) \cap \mathbb{B}_{\Phi}^{\rho}$$
\end{proof}
\begin{remark}
\cite{RS} proves the above theorem at the character level (i.e., the key positivity of $s_{\mu / \gamma}(\Phi)$). Theorem \ref{crystal-thrm} is however implicit in \cite{RS} and allows us to compute the explicit decomposition of $\Tab(\lambda / \mu, \Phi)$ into Demazure crystals. A short sketch is deferred to the appendix. 
\end{remark}
\begin{remark}
    \textcolor{black}{It is important that we assume $\Phi$ to be weakly increasing. For example, if $\mu = (3,2)$, $\gamma = (1,0)$ and $\Phi = (3,2)$, then $\Tab(\mu /\gamma, \Phi)$ does not even have the string property.}
\end{remark}
Consider the tensor product  $\mathcal{C}(\lambda; \mu, \gamma, \Phi) = \Tab(\lambda, \Phi_0) \otimes \Tab(\mu / \gamma, \Phi)$. Here $\Phi_0 = (1,2,3, \cdots)$ is the standard flag. The Demazure crystal $\Tab(\lambda, \Phi_0)$ is the singleton set containing $T_{\lambda} ^0$.

\begin{proposition} \label{extension of main theorem}
    $\mathcal{C}(\lambda; \mu, \gamma, \Phi)$ is a disjoint union of Demazure crystals.
\end{proposition}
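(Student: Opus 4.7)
The plan is to reduce to Theorem~\ref{sami-thrm}. By Theorem~\ref{crystal-thrm} we can write $\Tab(\mu/\gamma,\Phi)=\bigsqcup_j D_j$ as a disjoint union of Demazure crystals, so
$$
\mathcal{C}(\lambda;\mu,\gamma,\Phi) \;=\; \bigsqcup_j \bigl(\Tab(\lambda,\Phi_0)\otimes D_j\bigr),
$$
and each summand is a tensor product of two Demazure crystals, since $\Tab(\lambda,\Phi_0)=\{T_\lambda^0\}$ is itself the (singleton) Demazure crystal attached to the identity permutation. It therefore suffices to verify that each $\Tab(\lambda,\Phi_0)\otimes D_j$ has the string property; Theorem~\ref{sami-thrm} then identifies each summand with a disjoint union of Demazure crystals, and the result follows by taking the union over $j$.

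To check the string property, fix an index $i$ and take $u\otimes v\in\Tab(\lambda,\Phi_0)\otimes D_j$ with $u=T_\lambda^0$. The essential observation is that $u$ is a highest-weight element of $\Tab(\lambda,n)$: $e_i(u)=0$ and $\phi_i(u)=\lambda_i-\lambda_{i+1}$. Consequently, if $e_i(u\otimes v)\neq 0$, the tensor-product rule~\eqref{e action} forces $\varepsilon_i(v)>\phi_i(u)$ and $e_i(u\otimes v)=u\otimes e_i(v)$. Since $D_j$ has the string property (Proposition~\ref{a}), $e_i(v)\in D_j$, and therefore $e_i(u\otimes v)\in\Tab(\lambda,\Phi_0)\otimes D_j$. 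If in addition $f_i(u\otimes v)\neq 0$, the inequality $\varepsilon_i(v)>\phi_i(u)$ also places us in the second case of~\eqref{f action}, so $f_i(u\otimes v)=u\otimes f_i(v)$; the string property of $D_j$ again yields $f_i(v)\in D_j$, and hence $f_i(u\otimes v)\in\Tab(\lambda,\Phi_0)\otimes D_j$.

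The whole argument is a short unwinding of the tensor-product rules, so I do not foresee a real obstacle. The one point that genuinely matters is that $T_\lambda^0$ is highest weight, so neither $e_i$ nor $f_i$ is ever required to act on the left-hand factor of $u\otimes v$; this is essential, because the tight flag $\Phi_0$ leaves $\Tab(\lambda,\Phi_0)$ as the singleton $\{T_\lambda^0\}$ with no room for such action. The mechanism that makes this automatic is precisely that the hypothesis $e_i(u\otimes v)\neq 0$ already forces $\varepsilon_i(v)>\phi_i(u)$, which is exactly the inequality that routes the subsequent $f_i$ action through the right-hand factor.
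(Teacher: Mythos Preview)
Your proof is correct and follows essentially the same route as the paper: decompose $\Tab(\mu/\gamma,\Phi)$ via Theorem~\ref{crystal-thrm}, then verify the string property of each $\Tab(\lambda,\Phi_0)\otimes D_j$ and invoke Theorem~\ref{sami-thrm}. The paper additionally cites Joseph's theorem as an alternative justification, but its self-contained argument via the string property is the same as yours (you are in fact slightly more explicit in checking the $e_i$-closure condition).
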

\begin{proof}
By Theorem~\ref{crystal-thrm}, we have $\Tab(\mu / \gamma, \Phi) = \bigsqcup_p D_p$ where each $D_p$ is a Demazure crystal. Then $$\mathcal{C}(\lambda; \mu, \gamma, \Phi) = \Tab(\lambda, \Phi_0) \otimes (\bigsqcup D_p) = \bigsqcup \Tab(\lambda, \Phi_0) \otimes D_p $$ 
But $\Tab(\lambda, \Phi_0) \otimes D_p$ is a disjoint union of Demazure crystals by Joseph's theorem \cite{Joseph1}. This fact also follows from theorem \ref{sami-thrm} because $\Tab(\lambda, \Phi_0) \otimes D$ has the string property as we show below:

For $u \otimes v \in \Tab(\lambda, \Phi_0) \otimes D$, $e_i(u \otimes v) \neq 0$ only if $e_i(u \otimes v) = u \otimes e_i(v)$. This implies that $\varepsilon_i(v) > \phi_i(u)$. By the tensor product rule we therefore have $f_i(u \otimes v) = u \otimes f_i(v)$. By assumptions, $e_i(v) \neq 0$ and $f_i(v) \neq 0$. Since $D$ is a Demazure crystal, by proposition \ref{a} it follows that $f_i(v) \in D$ and hence $f_i(u \otimes v) \in \Tab(\lambda, \Phi_0) \otimes D$.
\end{proof}
\begin{corollary} (\cite[Theorem 20]{RS})
    $\characx^{\lambda}\,s_{\mu / \gamma} (\Phi) = ch(\mathcal{C}(\lambda; \mu, \gamma, \Phi))$ is key-positive. 
\end{corollary}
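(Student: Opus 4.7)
The plan is to derive this corollary as a direct consequence of Proposition~\ref{extension of main theorem} together with the identification of the character of $\mathcal{C}(\lambda; \mu, \gamma, \Phi)$.

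First I would compute $ch(\mathcal{C}(\lambda;\mu,\gamma,\Phi))$ explicitly. Since $\Phi_0 = (1,2,3,\ldots)$ forces the entry in row $i$ to be at most $i$, and since tableaux must strictly increase down columns, the set $\Tab(\lambda, \Phi_0)$ contains only the dominant tableau $T_\lambda^0$, whose weight is $\lambda$. Hence $ch(\Tab(\lambda, \Phi_0)) = \characx^\lambda$. On the other hand, by the very definition of the flagged skew Schur polynomial in Section~\ref{s:prelims}, we have $ch(\Tab(\mu/\gamma, \Phi)) = s_{\mu/\gamma}(X_\Phi)$. Since the character of a tensor product of crystals is the product of the individual characters (as noted in \S\ref{ss:tpcrystal}), we obtain
\[
ch(\mathcal{C}(\lambda;\mu,\gamma,\Phi)) = \characx^\lambda \, s_{\mu/\gamma}(X_\Phi).
\]

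Next I would apply Proposition~\ref{extension of main theorem} to write $\mathcal{C}(\lambda;\mu,\gamma,\Phi) = \bigsqcup_q \mathcal{B}_{\omega_q}(\nu_q)$ as a disjoint union of Demazure crystals. Taking characters and invoking the refined Demazure character formula (the proposition attributed to \cite{kash}) yields
\[
\characx^\lambda \, s_{\mu/\gamma}(X_\Phi) = \sum_q \key_{\omega_q \nu_q},
\]
which is manifestly key-positive.

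The work of this corollary is thus entirely upstream: the nontrivial content lies in Theorem~\ref{crystal-thrm} and Proposition~\ref{extension of main theorem}, which in turn rely on Assaf's Theorem~\ref{sami-thrm} and the string-property analysis in Lemma~\ref{b rho lemma}. Once those are in hand, the corollary is a one-line deduction. The only point requiring a moment's care is noting that the singleton crystal $\Tab(\lambda,\Phi_0) = \{T_\lambda^0\}$ is itself a Demazure crystal (namely $\mathcal{B}_e(\lambda)$ with character $\key_\lambda = \characx^\lambda$), which guarantees that tensoring with it on the left, as done in the proof of Proposition~\ref{extension of main theorem}, indeed fits into the Demazure-crystal framework.
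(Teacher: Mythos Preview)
Your proposal is correct and follows essentially the same approach as the paper, which treats the corollary as an immediate consequence of Proposition~\ref{extension of main theorem} together with the refined Demazure character formula and the multiplicativity of characters under tensor product. The paper does not even supply a separate proof, so your write-up simply spells out the details that are left implicit there.
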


A skew tableau $T \in Tab(\mu/\gamma, n)$ is {\em $\lambda$-dominant} if  the concatenated word $b_{T_{\lambda}^0} *b_T$ is a dominant word. We now prove  the first part of theorem \ref{first result}.
\begin{theorem}\label{thm:firstpart}
$c_{\lambda, \, \mu /\gamma} ^{\,\nu} (\Phi)$  is the cardinality of the set $\Tab_{\lambda} ^{\nu} (\mu /\gamma, \Phi)$ of all $\lambda$-dominant tableaux in $\Tab(\mu / \gamma, \Phi)$ of weight $\nu - \lambda$.
\end{theorem}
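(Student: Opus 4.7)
The plan is to compute $c_{\lambda,\mu/\gamma}^{\nu}(\Phi)$ by leveraging the Demazure decomposition of $\mathcal{C}(\lambda;\mu,\gamma,\Phi)$ from Proposition~\ref{extension of main theorem} and identifying the terms of shape $\nu$ with highest-weight vertices.

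First I would invoke Proposition~\ref{extension of main theorem} to write $\mathcal{C}(\lambda;\mu,\gamma,\Phi) = \bigsqcup_{p} \mathcal{B}_{\omega_p}(\lambda_p)$ with $\lambda_p \in \mathcal{P}[n]$ and $\omega_p \in \mathfrak{S}_n$. Since the character of a tensor product of crystals is the product of the characters, the left hand side has character $\characx^{\lambda}\, s_{\mu/\gamma}(X_\Phi)$, while by the refined Demazure character formula the right hand side has character $\sum_p \key_{\omega_p \lambda_p}$. Applying $T_{w_0}$ to this equality, and using $T_{w_0}(\key_{\omega_p \lambda_p}) = s_{(\omega_p \lambda_p)^{\dagger}} = s_{\lambda_p}$ (because $\lambda_p$ is already a partition), I get
\[
T_{w_0}\bigl(\characx^{\lambda}\, s_{\mu/\gamma}(X_\Phi)\bigr) \;=\; \sum_p s_{\lambda_p}.
\]
Comparing with the defining equation~\eqref{eq:defeqn} and using the linear independence of Schur polynomials, this yields
\[
c_{\lambda,\,\mu/\gamma}^{\,\nu}(\Phi) \;=\; \#\{p : \lambda_p = \nu\}.
\]

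Next I would reinterpret this count as a count of highest-weight elements of $\mathcal{C}(\lambda;\mu,\gamma,\Phi)$. By Remark~\ref{dom-in-dem}, each Demazure crystal $\mathcal{B}_{\omega_p}(\lambda_p)$ contains exactly one vertex annihilated by all the raising operators $e_i$, and this vertex has weight $\lambda_p$. Hence $\#\{p : \lambda_p = \nu\}$ equals the number of elements $x \in \mathcal{C}(\lambda;\mu,\gamma,\Phi)$ with $e_i(x) = 0$ for all $i$ and $\mathrm{wt}(x) = \nu$.

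Finally I would translate this condition into the $\lambda$-dominance of tableaux. Since $\Tab(\lambda,\Phi_0) = \{T_\lambda^0\}$, any element of $\mathcal{C}(\lambda;\mu,\gamma,\Phi)$ has the form $T_\lambda^0 \otimes T$ for some $T \in \Tab(\mu/\gamma,\Phi)$. Under the embedding into the word crystal $\mathbb{B}^{\otimes(|\lambda|+|\mu/\gamma|)}$, this element corresponds to the concatenation $b_{T_\lambda^0} * b_T$. By the very definition of $\lambda$-dominance (given just before the theorem), we have $e_i(T_\lambda^0 \otimes T) = 0$ for all $i$ iff $T$ is $\lambda$-dominant. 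Moreover $\mathrm{wt}(T_\lambda^0 \otimes T) = \lambda + \mathrm{wt}(T)$, so the weight condition becomes $\mathrm{wt}(T) = \nu - \lambda$. This establishes a bijection between the highest-weight elements of $\mathcal{C}(\lambda;\mu,\gamma,\Phi)$ of weight $\nu$ and the set $\Tab_\lambda^\nu(\mu/\gamma,\Phi)$, completing the proof.

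There is no serious obstacle; the argument is essentially a conceptual unwinding, with all the work hidden in Proposition~\ref{extension of main theorem}. The only subtlety worth spelling out carefully is the compatibility between the tensor product rule for the $e_i$ on $T_\lambda^0 \otimes T$ and the condition that $b_{T_\lambda^0} * b_T$ is a dominant word, which is immediate from the embedding into the word crystal.
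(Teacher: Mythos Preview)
Your proposal is correct and follows essentially the same approach as the paper: decompose $\mathcal{C}(\lambda;\mu,\gamma,\Phi)$ into Demazure crystals, pass to characters, apply $T_{w_0}$ to extract the Schur expansion, and then count highest-weight elements by identifying them with $\lambda$-dominant tableaux via Remark~\ref{dom-in-dem}. The paper's proof is organized identically, grouping the Demazure pieces as $\bigsqcup_{\nu}\bigsqcup_{w\in\mathcal{W}(\nu)}\mathcal{B}_w(\nu)$ and obtaining $c_{\lambda,\mu/\gamma}^{\nu}(\Phi)=|\mathcal{W}(\nu)|$ in place of your $\#\{p:\lambda_p=\nu\}$.
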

\begin{proof}
  By Proposition \ref{extension of main theorem} (and Remark~\ref{dom-in-dem}), for all $\nu \in \mathcal{P}[n]$ there exists a multi-subset $\mathcal{W}(\nu) \subseteq \mathfrak{S}_n$ such that
  \begin{equation} \label{eq:clmnp}
    \mathcal{C}(\lambda; \mu, \gamma, \Phi) = \bigsqcup _{\nu \in \mathcal{P}[n]} \bigsqcup _{w \in \mathcal{W}(\nu)} \mathcal{B}_w (\nu)
    \end{equation}
    Taking characters, we obtain
    \[\characx^{\lambda}\,s_{\mu / \gamma} (\Phi) = \sum_{\nu}\sum_{w \in \mathcal{W}(\nu)} \key_{w\nu}\]
    Applying $T_{w_0}$ and using proposition \ref{key symmetrization}  gives $T_{w_0}(\characx^{\lambda}\,s_{\mu / \gamma} (\Phi)) = \sum_{\nu} |\mathcal{W}(\nu)| s_{\nu}(\characx)$. Thus $$c^{\,\nu}_{\lambda, \,\mu /\gamma} (\Phi) = |\mathcal{W}(\nu)|$$
    Now, by definition of $\lambda$-dominance, the number of elements $\zeta \in \mathcal{C}(\lambda; \mu, \gamma, \Phi)$ of weight $\nu$ satisfying $e_i \zeta =0 $ for all $i$ is precisely $|Tab^{\nu} _{\lambda}(\mu /\gamma, \Phi)|$.
    On the other hand, in the RHS of equation~\eqref{eq:clmnp}, each $\mathcal{B}_w (\nu)$ has a unique element $\xi$ such that $e_i \xi =0 $ for all $i$; this element has weight $\nu$. Thus, the number of elements $\zeta$ as above  is also equal to  $|\mathcal{W}(\nu)|$. Putting all these together establishes Theorem~\ref{thm:firstpart}. 
\end{proof}

\section{A hive model for  flagged skew Littlewood-Richardson coefficients}\label{sec:hive}
In this section we define the skew hive polytope and its faces corresponding to flags $\Phi$. We then prove the second part of theorem \ref{first result}.
\subsection{Skew GT patterns}\label{sec:skewgt}
Given $m,n \geq 1 $, a skew Gelfand-Tsetlin pattern is an array of real numbers $\{x_{ij}: 0 \leq i \leq m, \hspace{0.2cm} 1 \leq j \leq n\}$ satisfying the following inequalities:
\begin{center}
$NE_{ij}= x_{ij} - x_{(i-1)j} \geq 0 \quad \quad \quad 1 \leq i \leq m; \quad 1 \leq j \leq n$\\ 
$SE_{ij}=  x_{(i-1)j} - x_{i (j+1)} \geq 0 \quad \quad \quad 1 \leq i \leq m; \quad 1\leq j \leq n-1 $
\end{center}
The above inequalities simply mean that the consecutive rows interlace. Hence we arrange the rows in the shape of a parallelogram as follows (shown for $m=n=4$):

\medskip
\begin{center}
	\begin{tikzpicture}
		\draw (2,2*1.732) node {$x_{01}$};
		\draw (3,2*1.732) node {$x_{02}$};
		\draw (4,2*1.732) node {$x_{03}$};
		\draw (5,2*1.732) node {$x_{04}$};
		\draw (1.5,1.5*1.732) node {$x_{11}$};
		\draw (2.5,1.5*1.732) node {$x_{12}$};
		\draw (3.5,1.5*1.732) node {$x_{13}$};
		\draw (4.5,1.5*1.732) node {$x_{14}$};
		\draw (1,1.732) node {$x_{21}$};
		\draw (2,1.732) node {$x_{22}$};
		\draw (3,1.732) node {$x_{23}$};
		\draw (4,1.732) node {$x_{24}$};
		\draw (0.5,0.5*1.732) node {$x_{31}$};
		\draw (1.5,0.5*1.732) node {$x_{32}$};
		\draw (2.5,0.5*1.732) node {$x_{33}$};
		\draw (3.5,0.5*1.732) node {$x_{34}$};
		\draw (0,0) node {$x_{41}$};
		\draw (1,0) node {$x_{42}$};
		\draw (2,0) node {$x_{43}$};
		\draw (3,0) node {$x_{44}$};
	\end{tikzpicture}
\end{center}

For $\mu, \gamma \in \mathcal{P}[n]$, such that $\gamma \subset \mu$, the skew Gelfand-Tsetlin polytope
$\GT(\mu/\gamma, m)$ is the set of all skew Gelfand-Tsetlin patterns $(x_{ij})$ with $0 \leq i \leq m,\, 1 \leq j \leq n$ satisfying $x_{0j}=\gamma_j$, $x_{mj}=\mu_j$ for all $j$. Define, $$\GT_{\mathbb{Z}}(\mu/\gamma, m) := \{X = (x_{ij}) \in \GT(\mu / \gamma, m): x_{ij}\in \mathbb{Z}\}$$
 
 In the sequel, we will only have occasion to consider the case when $m=n$.
Consider the map $$\Upsilon: \GT_{\mathbb{Z}}(\mu /\gamma, n) \rightarrow  \Tab(\mu / \gamma,n)$$ where $X = (x_{ij}) \in \GT_{\mathbb{Z}}(\mu /\gamma, n)$ maps to the unique tableau $\Upsilon(X)$ in $\Tab(\mu / \gamma)$ such that the number of $i$ that appears in the $j^{th}$ row of $\Upsilon(X)$ is $x_{ij} - x_{(i-1)j}$.
\begin{figure}[h]
\begin{center}
\begin{tikzpicture}[scale=0.8]
	\draw (2,2*1.732) node {$2$};
	\draw (3,2*1.732) node {$1$};
	\draw (4,2*1.732) node {$0$};
	\draw (5,2*1.732) node {$0$};
	\draw (1.5,1.5*1.732) node {$3$};
	\draw (2.5,1.5*1.732) node {$2$};
	\draw (3.5,1.5*1.732) node {$0$};
	\draw (4.5,1.5*1.732) node {$0$};
	\draw (1,1.732) node {$4$};
	\draw (2,1.732) node {$3$};
	\draw (3,1.732) node {$0$};
	\draw (4,1.732) node {$0$};
	\draw (0.5,0.5*1.732) node {$4$};
	\draw (1.5,0.5*1.732) node {$3$};
	\draw (2.5,0.5*1.732) node {$2$};
	\draw (3.5,0.5*1.732) node {$0$};
	\draw (0,0) node {$4$};
	\draw (1,0) node {$3$};
	\draw (2,0) node {$2$};
	\draw (3,0) node {$1$};
\end{tikzpicture}
\begin{tikzpicture}
    \draw (0,0) node {\null};
    \draw[|->] (0.5,1.5) -- (1.5,1.5);
    \draw (1,1.75) node {$\Upsilon$};
    \draw (2.5,1.5) node {\null};
\end{tikzpicture}
\begin{tikzpicture}
    \draw (0,0) -- (0,1.4) -- (2.1, 1.4) -- (2.1, 2.8) -- (2.8,2.8);
    \draw (2.8,2.8) -- (2.8,2.1) -- (0.7, 2.1) -- (0.7, 0) -- (0,0);
    \draw (0, 0.7) -- (1.4, 0.7) -- (1.4, 2.8) -- (2.1, 2.8);
    \draw (0.35, 0.35) node {$4$};
    \draw (0.35, 1.05) node {$3$};
    \draw (1.05, 1.05) node {$3$};
    \draw (1.05, 1.75) node {$1$};
    \draw (1.75, 1.75) node {$2$};
    \draw (1.75, 2.45) node {$1$};
    \draw (2.45, 2.45) node {$2$};
\end{tikzpicture}
\end{center}
\caption{The skew GT pattern on the left maps to the skew tableau on the right under the map $ \Upsilon$.}\label{Tab to GT}
\end{figure}
The following statement is well-known - see for instance \cite[\S 3]{louck} (whose pattern drawing convention differs from ours by a vertical flip).
\begin{lemma} \label{louck_lemma}
 The map $\Upsilon$ is a bijection.
\end{lemma}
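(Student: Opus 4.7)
The plan is to prove bijectivity by explicitly constructing a two-sided inverse to $\Upsilon$. The core of the argument consists of translating each of the two defining inequalities (NE and SE) of a skew GT pattern into one of the two semistandard conditions (weakly increasing rows, strictly increasing columns) on the associated tableau.

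First I would check that $\Upsilon$ is well-defined, i.e., that the output $T = \Upsilon(X)$ really is a semistandard skew tableau of shape $\mu/\gamma$. The shape condition follows from telescoping: the length of row $j$ equals $\sum_{i=1}^{n}(x_{ij}-x_{(i-1)j}) = x_{nj}-x_{0j} = \mu_j - \gamma_j$. The NE inequality $x_{ij}-x_{(i-1)j}\geq 0$ guarantees that the prescribed multiplicities are non-negative, so one can fill row $j$ in weakly increasing order. The only non-trivial point is strict increase down columns. Observe that, by construction, the entries $\leq i$ in row $j$ occupy exactly the columns $\gamma_j+1, \gamma_j+2, \ldots, x_{ij}$. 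Equivalently, the entry in box $(j,k)$ of $T$ is $\min\{i : x_{ij}\geq k\}$. If $a$ is the entry at $(j,k)$ and $b$ is the entry at $(j+1,k)$, then $x_{(a-1)j}<k$ and $x_{b(j+1)}\geq k$. Applying the SE inequality $x_{(a-1)j}\geq x_{a(j+1)}$ yields $x_{a(j+1)}\leq x_{(a-1)j}<k$, so $b>a$; this gives strict column-increase.

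Next I would construct the candidate inverse $\Psi: \Tab(\mu/\gamma,n)\to \GT_{\mathbb Z}(\mu/\gamma,n)$ by
\[
\Psi(T)_{ij} := \gamma_j + \#\{\text{entries in row } j \text{ of } T \text{ that are} \leq i\}.
\]
The boundary conditions $\Psi(T)_{0j}=\gamma_j$ and $\Psi(T)_{nj}=\mu_j$ are immediate (the row has $\mu_j-\gamma_j$ entries, all $\leq n$). The NE inequality is immediate since consecutive differences count the number of $i$'s in row $j$. For the SE inequality, the argument dualises the one above: set $c = \Psi(T)_{(i-1)j}$ and $c' = \Psi(T)_{i(j+1)}$, so that every box in row $j$ strictly right of column $c$ carries an entry $\geq i$, while the box $(j+1,c')$ (if $c'>\gamma_{j+1}$) carries an entry $\leq i$. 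If $c<c'$, one checks that $(j,c')$ lies in the skew shape and that the column-strict condition on boxes $(j,c')$ and $(j+1,c')$ is violated; hence $c\geq c'$.

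Finally, one verifies $\Upsilon\circ\Psi = \mathrm{id}$ and $\Psi\circ\Upsilon = \mathrm{id}$ by unwinding definitions: both compositions reduce to counting entries $\leq i$ in a row, which matches the telescoping formula for $x_{ij}$ in terms of the NE differences. I expect the main (if mild) obstacle to be the SE-to-column-strict translation; both directions of this correspondence require a careful case analysis at the boundary (when the relevant counts vanish and the ``last column'' reduces to $\gamma_j$ or $\gamma_{j+1}$), but no deeper idea is needed beyond the observation that column strictness and interlacing of consecutive GT rows encode the same piece of data.
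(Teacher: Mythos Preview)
Your proof is correct. The paper does not actually prove this lemma, stating it as well-known with a reference to \cite[\S 3]{louck}; your explicit inverse $\Psi(T)_{ij} = \gamma_j + \#\{\text{entries} \leq i \text{ in row } j \text{ of } T\}$ is precisely the map the paper invokes (without justification) in the proof of the subsequent Lemma~\ref{lem:flags}, and your SE/column-strictness translation is the standard argument.
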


\subsection{Flagged skew GT patterns}\label{sec:flagskewGT}
We keep the notation of the previous subsection, but in addition assume that we are given a flag $\Phi =(\Phi_1,\cdots ,\Phi_n)$. Define the set of flagged skew GT patterns:
\begin{equation}\label{eq:flaggt}
  \GT(\mu / \gamma, \Phi) = \{(x_{ij})\in \GT(\mu / \gamma, n): x_{n,j} = x_{n-1,j}  = \cdots = x_{\Phi_j,j} \quad \forall 1 \leq j \leq n \}
  \end{equation}
and let $\GT_{\mathbb{Z}}(\mu / \gamma, \Phi)$ denote the set of integer points in this polytope. We have:
\begin{lemma}\label{lem:flags}
The map $\Upsilon$ restricts to a bijection between $\GT_{\mathbb{Z}}(\mu / \gamma , \Phi)$ and $\Tab(\mu / \gamma, \Phi)$.
\end{lemma}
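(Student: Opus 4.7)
The plan is to leverage the bijection $\Upsilon$ already established in Lemma~\ref{louck_lemma} and simply verify that the flag condition defining $\GT_{\mathbb{Z}}(\mu/\gamma,\Phi)$ in \eqref{eq:flaggt} translates, under $\Upsilon$, exactly into the row-bound condition defining $\Tab(\mu/\gamma,\Phi)$. Since $\Upsilon$ is already known to be a bijection on the unrestricted sets, restricting it to the appropriate subsets will give the desired bijection immediately once we identify the two conditions.

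First, I would recall how $\Upsilon$ reads off the content of each row: for $X=(x_{ij})\in \GT_{\mathbb{Z}}(\mu/\gamma,n)$, the number of occurrences of the entry $i$ in row $j$ of the tableau $T=\Upsilon(X)$ equals the difference $x_{ij}-x_{(i-1)j}$. In particular, row $j$ of $T$ contains entries that are all at most $\Phi_j$ if and only if $i$ does not appear in row $j$ for any $i>\Phi_j$, i.e.
\[ x_{ij}-x_{(i-1)j}=0 \quad \text{for all } i>\Phi_j. \]
This is equivalent to the chain of equalities $x_{\Phi_j,j}=x_{\Phi_j+1,j}=\cdots =x_{n,j}$, which is precisely the defining condition of $\GT(\mu/\gamma,\Phi)$ in \eqref{eq:flaggt}.

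Second, I would observe that the definition of $\Tab(\mu/\gamma,\Phi)$ requires exactly that every entry in row $j$ is at most $\Phi_j$, for each $j$. So the tableau $T=\Upsilon(X)$ lies in $\Tab(\mu/\gamma,\Phi)$ if and only if $X\in \GT_{\mathbb{Z}}(\mu/\gamma,\Phi)$.

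Combining these two observations, $\Upsilon$ maps $\GT_{\mathbb{Z}}(\mu/\gamma,\Phi)$ into $\Tab(\mu/\gamma,\Phi)$, and the preimage of $\Tab(\mu/\gamma,\Phi)$ under $\Upsilon$ is $\GT_{\mathbb{Z}}(\mu/\gamma,\Phi)$. Since $\Upsilon$ is a bijection on the ambient sets $\GT_{\mathbb{Z}}(\mu/\gamma,n)\to \Tab(\mu/\gamma,n)$ by Lemma~\ref{louck_lemma}, its restriction is a bijection between the two flagged subsets. There is no serious obstacle here—the proof is essentially a direct unwinding of definitions, and the only thing to be careful about is matching the flag index $\Phi_j$ on the row $j$ of the tableau with the vertical chain of equal entries in column $j$ of the GT pattern.
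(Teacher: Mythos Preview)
Your proof is correct and follows essentially the same approach as the paper: both arguments use that the number of $i$'s in row $j$ of $\Upsilon(X)$ equals $x_{ij}-x_{(i-1)j}$, so the flag condition on the tableau side translates directly into the chain of equalities $x_{\Phi_j,j}=\cdots=x_{n,j}$ defining $\GT_{\mathbb{Z}}(\mu/\gamma,\Phi)$. Your presentation is in fact slightly cleaner, as you establish the equivalence of the two conditions at once rather than checking the forward and inverse maps separately with an unnecessary case split on whether $\Phi=(n,\ldots,n)$.
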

\begin{proof}
    Let $ X =(x_{ij}) \in \GT_{\mathbb{Z}}(\mu / \gamma , \Phi)$. If $\Phi=(n,n,\cdots,n)$ then it is easy to see $ \Upsilon(X) \in \Tab(\mu/\gamma,\Phi) $. Otherwise let $k$ be the maximum such that $ \Phi_{k} \neq n $. Then the number of $ i \,(\Phi_j<i \leq n) $ that appear in the $j^{th}$ row $ (1 \leq j \leq k)$ of $\Upsilon(X)$ is $x_{ij} - x_{(i-1)j}=0$. Since $\forall j > k$, $  \Phi_j = n$ so for those $j$ in $j^{th}$ row of $\Upsilon(X)$ all entries are $ \leq \Phi_j(=n)$. Thus $ \Upsilon(X) \in \Tab(\mu/\gamma,\Phi)$. Also, if $ T \in \Tab(\mu/\gamma,\Phi) $ then $(i,j)^{th}$ entry of $\Upsilon^{-1}(T) $ is the number of entries $\leq i$ that appear in the $j^{th}$ row of $T$. So $ \Upsilon^{-1}(T) \in \GT_{\mathbb{Z}}(\mu / \gamma , \Phi)$.  
\end{proof}
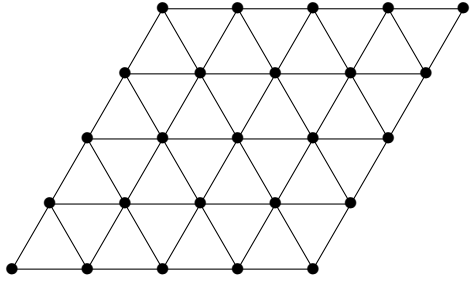
\begin{figure}[h]
    \centering
    \begin{center}
    \begin{tikzpicture}
        \node at (0,0) {$\bullet$};
        \node at (1,0) {$\bullet$};
        \node at (2,0) {$\bullet$};
        \node at (3,0) {$\bullet$};
		\node at (4,0) {$\bullet$};
        \draw (0,0) -- (1,0) -- (2,0) -- (3,0);
        \draw (3,0) -- (4,0);
%		\draw[blue, ultra thick] (3,0) -- (4,0);
        \node at (0.5,0.5*1.732) {$\bullet$};
        \node at (1.5,0.5*1.732) {$\bullet$};
        \node at (2.5,0.5*1.732) {$\bullet$};
        \node at (3.5,0.5*1.732) {$\bullet$};
        \node at (4.5,0.5*1.732) {$\bullet$};
		\draw (0.5, 0.5*1.732) -- (1.5, 0.5*1.732) -- (2.5, 0.5*1.732);
        \draw (2.5, 0.5*1.732) -- (3.5, 0.5*1.732);
%		\draw[blue, ultra thick] (2.5, 0.5*1.732) -- (3.5, 0.5*1.732);
		\draw (3.5, 0.5*1.732) -- (4.5, 0.5*1.732);
        \node at (1,1.732) {$\bullet$};
        \node at (2,1.732) {$\bullet$};
        \node at (3,1.732) {$\bullet$};
        \node at (4,1.732) {$\bullet$};
        \node at (5,1.732) {$\bullet$};
		\draw (1, 1.732) -- (2, 1.732) -- (3, 1.732) -- (4, 1.732) -- (5, 1.732);
        \node at (1.5,1.5*1.732) {$\bullet$};
        \node at (2.5,1.5*1.732) {$\bullet$};
        \node at (3.5,1.5*1.732) {$\bullet$};
        \node at (4.5,1.5*1.732) {$\bullet$};
        \node at (5.5,1.5*1.732) {$\bullet$};
		\draw (1.5, 1.5*1.732) -- (2.5, 1.5*1.732);
%        \draw[red, ultra thick] (1.5, 1.5*1.732) -- (2.5, 1.5*1.732);
		\draw (2.5, 1.5*1.732) -- (3.5, 1.5*1.732) -- (4.5, 1.5*1.732) -- (5.5, 1.5*1.732);
        \node at (2,2*1.732) {$\bullet$};
        \node at (3,2*1.732) {$\bullet$};
        \node at (4,2*1.732) {$\bullet$};
        \node at (5,2*1.732) {$\bullet$};
        \node at (6,2*1.732) {$\bullet$};
        \draw (2,2*1.732) -- (3,2*1.732);
%		\draw[red, ultra thick] (2,2*1.732) -- (3,2*1.732);
		\draw (3, 2*1.732) -- (4,2*1.732) -- (5, 2*1.732) -- (6,2*1.732);
		
		\draw (0,0) -- (0.5, 0.5*1.732) -- (1, 1.732) -- (1.5, 1.5*1.732);
        \draw (1.5, 1.5*1.732) -- (2, 2*1.732);
%		\draw[red, ultra thick] (1.5, 1.5*1.732) -- (2, 2*1.732);
		\draw (1,0) -- (1.5, 0.5* 1.732) -- (2, 1.732) -- (2.5, 1.5*1.732);
        \draw (2.5, 1.5*1.732) -- (3, 2*1.732);
%		\draw[red, ultra thick] (2.5, 1.5*1.732) -- (3, 2*1.732);
		\draw (2,0) -- (2.5, 0.5* 1.732) -- (3, 1.732) -- (3.5, 1.5*1.732) -- (4, 2*1.732);
		\draw (3,0) -- (3.5, 0.5* 1.732) -- (4, 1.732) -- (4.5, 1.5*1.732);
        \draw (4.5, 1.5*1.732) -- (5, 2*1.732);
%		\draw[green, ultra thick] (4.5, 1.5*1.732) -- (5, 2*1.732);
		\draw (4,0) -- (4.5, 0.5* 1.732) -- (5, 1.732);
        \draw (5, 1.732) -- (5.5, 1.5*1.732);
%		\draw[green, ultra thick] (5, 1.732) -- (5.5, 1.5*1.732);
		\draw (5.5, 1.5*1.732) -- (6, 2*1.732);
			
		\draw (1,0) -- (0.5, 0.5*1.732);
		\draw (2,0) -- (1.5, 0.5*1.732) -- (1,1.732);
		\draw (3,0) -- (2.5, 0.5*1.732);
%        \draw[blue, ultra thick] (3,0) -- (2.5, 0.5*1.732);
		\draw (2.5, 0.5*1.732) -- (2, 1.732) -- (1.5, 1.5*1.732);
        \draw (4,0) -- (3.5, 0.5*1.732);
%		\draw[blue, ultra thick] (4,0) -- (3.5, 0.5*1.732);
		\draw (3.5, 0.5*1.732) -- (3, 1.732) -- (2.5, 1.5*1.732) -- (2, 2*1.732);
		\draw (4.5, 0.5*1.732) -- (4, 1.732) -- (3.5, 1.5*1.732) -- (3, 2*1.732);
        \draw (5,1.732) -- (4.5, 1.5*1.732);
%		\draw[green, ultra thick] (5,1.732) -- (4.5, 1.5*1.732);
		\draw (4.5, 1.5*1.732) -- (4, 2*1.732);
        \draw (5.5, 1.5*1.732) -- (5, 2*1.732);
%		\draw[green, ultra thick] (5.5, 1.5*1.732) -- (5, 2*1.732);
	\end{tikzpicture}
 \end{center}

    \caption{The 4-hive parallelogram}\label{hive-grid}
\end{figure}

In the next two subsections,  we give a hive model for the flagged skew Littlewood-Richardson coefficients.
\subsection{Skew hives}
 \label{sec:skewhive} 
The $(n+1) \times (n+1)$  array of nodes in figure \ref{hive-grid} is called the \emph{n-hive parallelogram}. Observe that the small rhombi \footnote{A rhombus with unit side length.} in the $n$-hive parallelogram are oriented in the following three different ways:
$$\text{Northeast (NE):} \;\;\; \begin{tikzpicture}
  [scale=0.4]
  \draw (0,0) -- (1,0);
  \draw (0,0) -- (0.5, 0.5*1.732);
  \draw (0.5, 0.5*1.732) -- (1.5, 0.5*1.732);
  \draw (1,0) -- (1.5,0.5*1.732);
  
\end{tikzpicture}   \; , \;\;
 \text{Southeast (SE):} \;\;\;   \begin{tikzpicture}
  [scale=0.4]
  \draw (0,0) -- (1,0);
  \draw (0,0) -- (-0.5, 0.5*1.732);
  \draw (-0.5, 0.5*1.732) -- (0.5, 0.5*1.732);
  \draw (1,0) -- (0.5,0.5*1.732);
  
\end{tikzpicture}    \quad \text{ and }
\; \text{Vertical: }
\begin{tikzpicture}
 [scale=0.3]
  \draw (0,0) -- (0.5,0.5*1.732);
  \draw (1,0) -- (0.5,0.5*1.732);
  \draw (0.5,-0.5*1.732) -- (0,0);
  \draw (0.5,-0.5*1.732) -- (1,0);
  
\end{tikzpicture} .$$
Let $\lambda \in \mathcal{P}[n]$. Following \cite{krvfpsac}, we define  $\bar{\lambda} = (0, \lambda_1, \lambda_1 + \lambda_2, \cdots, |\lambda|)$ and $\partial \lambda = (\lambda_2 - \lambda_1, \lambda_3- \lambda_2, \cdots, \lambda_n - \lambda_{n-1})$. Let $\lambda, \mu, \gamma, \nu \in \mathcal{P}[n]$ be such that $\gamma \subset \mu, \, \lambda \subset \nu$ and $|\lambda| +|\mu| = |\nu|+|\gamma|$. We define the \emph{skew hive polytope} $\SHive(\lambda, \mu, \gamma, \nu)$ as the set of all $\mathbb{R}$-labellings of the nodes of the $n$-hive parallelogram such that:
\begin{enumerate}
	\item The boundary labels of the left boundary (read top to bottom); bottom boundary (read left to right); top boundary (read left to right); right boundary (read top to bottom) are $\bar{\lambda}, |\lambda| + \bar{\mu}, \bar{\gamma} \text{ and } |\gamma|+\bar{\nu}$ respectively as in figure \ref{fig:hive-example}.
	\item The {\em contents} of all the small rhombi are non-negative. We recall that the content of a small rhombus is the sum of the labels on its obtuse-angled nodes minus the sum of the labels on its acute-angled nodes.
\end{enumerate}
We denote the set of integer points in $\SHive(\lambda, \mu, \gamma, \nu)$ by $\SHive_\mathbb{Z}(\lambda, \mu, \gamma, \nu)$.
\begin{figure}[h]
\centering
    \begin{tikzpicture}
    \node at (0,0) {$\bullet$};
    \node at (1,0) {$\bullet$};
    \node at (2,0) {$\bullet$};
    \node at (3,0) {$\bullet$};
    \node at (4,0) {$\bullet$};
    \node at (0.5,0.5*1.732) {$\bullet$};
    \node at (4.5,0.5*1.732) {$\bullet$};
    \node at (1,1.732) {$\bullet$};
    \node at (5,1.732) {$\bullet$};
    \node at (1.5,1.5*1.732) {$\bullet$};
    \node at (5.5,1.5*1.732) {$\bullet$};
    \node at (2,2*1.732) {$\bullet$};
    \node at (3,2*1.732) {$\bullet$};
    \node at (4,2*1.732) {$\bullet$};
    \node at (5,2*1.732) {$\bullet$};
    \node at (6,2*1.732) {$\bullet$};
    \node at (-0.3,-0.3) {$|\lambda|$};
    \node at (-0.3,0.5*1.732) {$\sum_{i=1}^{3}\lambda_i$};
    \node at (0.1,1.732) {$\sum_{i=1}^{2} \lambda_i$};
    \node at (1.1,1.5*1.732) {$\lambda_1$};
    \node at (2,2.19*1.732) {$0$};
    \node at (3,2.2*1.732) {$\gamma_1$};
    \node at (4,2.2*1.732) {$ \substack{\sum_{i=1}^{2}\gamma_i}$};
    \node at (5.3,2.2*1.732) {$ \substack{\sum_{i=1}^{3}\gamma_i}$};
    \node at (6.3,2.1*1.732) {$ \substack{|\gamma|}$};
    \node at (6.2,1.5*1.732) {$\substack{|\gamma|+\nu_1}$};
    \node at (6.1,1*1.732) {$\substack{|\gamma|+\sum_{i=1}^{2}\nu_i}$};
    \node at (5.5,0.5*1.732) {$\substack{|\gamma|+\sum_{i=1}^{3}\nu_i}$};
    \node at (1,-0.5) {$\substack{\mu_1 \\ + |\lambda|}$};
    \node at (2,-0.5) {$\substack{\sum_{i=1} ^2 \mu _i \\+ |\lambda|}$};
    \node at (3.3,-0.5) {$\substack{\sum_{i=1} ^3 \mu _i \\+ |\lambda|}$};
    \node at (5,-0.4) {$\substack{|\lambda|+|\mu |=|\gamma|+|\nu|}$};
    \node at (0.5,0.5*1.732) {$\bullet$};
    \node at (4.5,0.5*1.732) {$\bullet$};
    \node at (1,1.732) {$\bullet$};
    \node at (5,1.732) {$\bullet$};
    \node at (1.5,1.5*1.732) {$\bullet$};
    \node at (5.5,1.5*1.732) {$\bullet$};
    \node at (2,2*1.732) {$\bullet$};
    \node at (3,2*1.732) {$\bullet$};
    \node at (4,2*1.732) {$\bullet$};
    \node at (5,2*1.732) {$\bullet$};
    \node at (6,2*1.732) {$\bullet$};
    \draw (0,0) -- (1,0) -- (2,0) -- (3,0);
    \draw (3,0) -- (4,0);
    \draw (0.5, 0.5*1.732) -- (1.5, 0.5*1.732) -- (2.5, 0.5*1.732);
    \draw (2.5, 0.5*1.732) -- (3.5, 0.5*1.732);
    \draw (3.5, 0.5*1.732) -- (4.5, 0.5*1.732);
    \draw (1, 1.732) -- (2, 1.732) -- (3, 1.732) -- (4, 1.732) -- (5, 1.732);
    \draw (1.5, 1.5*1.732) -- (2.5, 1.5*1.732);
    \draw (2.5, 1.5*1.732) -- (3.5, 1.5*1.732) -- (4.5, 1.5*1.732) -- (5.5, 1.5*1.732);
    \draw (2,2*1.732) -- (3,2*1.732);
    \draw (3, 2*1.732) -- (4,2*1.732) -- (5, 2*1.732) -- (6,2*1.732);
    \draw (0,0) -- (0.5, 0.5*1.732) -- (1, 1.732) -- (1.5, 1.5*1.732);
    \draw (1.5, 1.5*1.732) -- (2, 2*1.732);
    \draw (1,0) -- (1.5, 0.5* 1.732) -- (2, 1.732) -- (2.5, 1.5*1.732);
    \draw (2.5, 1.5*1.732) -- (3, 2*1.732);
    \draw (2,0) -- (2.5, 0.5* 1.732) -- (3, 1.732) -- (3.5, 1.5*1.732) -- (4, 2*1.732);
    \draw (3,0) -- (3.5, 0.5* 1.732) -- (4, 1.732) -- (4.5, 1.5*1.732);
    \draw (4.5, 1.5*1.732) -- (5, 2*1.732);
    \draw (4,0) -- (4.5, 0.5* 1.732) -- (5, 1.732);
    \draw (5, 1.732) -- (5.5, 1.5*1.732);
    \draw (5.5, 1.5*1.732) -- (6, 2*1.732);
    \draw (1,0) -- (0.5, 0.5*1.732);
    \draw (2,0) -- (1.5, 0.5*1.732) -- (1,1.732);
    \draw (3,0) -- (2.5, 0.5*1.732);
    \draw (2.5, 0.5*1.732) -- (2, 1.732) -- (1.5, 1.5*1.732);
    \draw (4,0) -- (3.5, 0.5*1.732);
    \draw (3.5, 0.5*1.732) -- (3, 1.732) -- (2.5, 1.5*1.732) -- (2, 2*1.732);
    \draw (4.5, 0.5*1.732) -- (4, 1.732) -- (3.5, 1.5*1.732) -- (3, 2*1.732);
    \draw (5,1.732) -- (4.5, 1.5*1.732);
    \draw (4.5, 1.5*1.732) -- (4, 2*1.732);
    \draw (5.5, 1.5*1.732) -- (5, 2*1.732);
    \end{tikzpicture}
    \label{Hive Polytope}
    \begin{tikzpicture}
    \draw (0,-0.8) node {$\null$};
    \draw (2,2*1.732) node {$0$};
    \draw (3,2*1.732) node {$2$};
    \draw (4,2*1.732) node {$3$};
    \draw (5,2*1.732) node {$3$};
    \draw (6,2*1.732) node {$3$};
    \draw (1.5,1.5*1.732) node {$3$};
    \draw (2.5,1.5*1.732) node {$7$};
    \draw (3.5,1.5*1.732) node {$9$};
    \draw (4.5,1.5*1.732) node {$10$};
    \draw (5.5,1.5*1.732) node {$10$};
    \draw (1,1.732) node {$4$};
    \draw (2,1.732) node {$9$};
    \draw (3,1.732) node {$13$};
    \draw (4,1.732) node {$14$};
    \draw (5,1.732) node {$14$};
    \draw (0.5,0.5*1.732) node {$5$};
    \draw (1.5,0.5*1.732) node {$10$};
    \draw (2.5,0.5*1.732) node {$14$};
    \draw (3.5,0.5*1.732) node {$16$};
    \draw (4.5,0.5*1.732) node {$16$};
    \draw (0,0) node {$5$};
    \draw (1,0) node {$10$};
    \draw (2,0) node {$14$};
    \draw (3,0) node {$16$};
    \draw (4,0) node {$17$};
    \draw (0.25,0) -- (0.75,0);
    \draw (1.25,0) -- (1.75,0);
    \draw (2.25,0) -- (2.75,0);
    \draw (3.25,0) -- (3.75,0);
    \draw (0.75,0.5*1.732) -- (1.25,0.5*1.732);
    \draw (1.75,0.5*1.732) -- (2.25,0.5*1.732);
    \draw (2.75,0.5*1.732) -- (3.25,0.5*1.732);
    \draw (3.75,0.5*1.732) -- (4.25,0.5*1.732);
    \draw (1.25,1.732) -- (1.75,1.732);
    \draw (2.25,1.732) -- (2.75,1.732);
    \draw (3.25,1.732) -- (3.75,1.732);
    \draw (4.25,1.732) -- (4.75,1.732);
    \draw (1.75,1.5*1.732) -- (2.25,1.5*1.732);
    \draw (2.75,1.5*1.732) -- (3.25,1.5*1.732);
    \draw (3.75,1.5*1.732) -- (4.25,1.5*1.732);
    \draw (4.75,1.5*1.732) -- (5.25,1.5*1.732);
    \draw (2.25,2*1.732) -- (2.75,2*1.732);
    \draw (5.25,2*1.732) -- (5.75,2*1.732);
    \draw (3.25,2*1.732) -- (3.75,2*1.732);
    \draw (4.25,2*1.732) -- (4.75,2*1.732);
    \draw (0+0.15, 0+0.2598) -- (0.35, 0.6062);
    \draw (0.5+0.15, 0.5*1.732+0.2598) -- (0.5+0.35, 0.5*1.732 + 0.6062);
    \draw (1+0.15, 1.732+0.2598) -- (1+0.35, 1.732 + 0.6062);
    \draw (1.5+0.15, 1.5*1.732+0.2598) -- (1.5+0.35, 1.5*1.732 + 0.6062);
    \draw (1+0.15, 0+0.2598) -- (1+0.35, 0 + 0.6062);
    \draw (1.5+0.15, 0.5*1.732+0.2598) -- (1.5+0.35, 0.5*1.732 + 0.6062);
    \draw (2+0.15, 1.732+0.2598) -- (2+0.35, 1.732 + 0.6062);
    \draw (2.5+0.15, 1.5*1.732+0.2598) -- (2.5+0.35, 1.5*1.732 + 0.6062);
    \draw (2+0.15, 0+0.2598) -- (2+0.35, 0 + 0.6062);
    \draw (2.5+0.15, 0.5*1.732+0.2598) -- (2.5+0.35, 0.5*1.732 + 0.6062);
    \draw (3+0.15, 1.732+0.2598) -- (3+0.35, 1.732 + 0.6062);
    \draw (3.5+0.15, 1.5*1.732+0.2598) -- (3.5+0.35, 1.5*1.732 + 0.6062);
    \draw (3+0.15, 0+0.2598) -- (3+0.35, 0 + 0.6062);
    \draw (3.5+0.15, 0.5*1.732+0.2598) -- (3.5+0.35, 0.5*1.732 + 0.6062);
    \draw (4+0.15, 1.732+0.2598) -- (4+0.35, 1.732 + 0.6062);
    \draw (4.5+0.15, 1.5*1.732+0.2598) -- (4.5+0.35, 1.5*1.732 + 0.6062);
    \draw (4+0.15, 0+0.2598) -- (4+0.35, 0 + 0.6062);
    \draw (4.5+0.15, 0.5*1.732+0.2598) -- (4.5+0.35, 0.5*1.732 + 0.6062);
    \draw (5+0.15, 1.732+0.2598) -- (5+0.35, 1.732 + 0.6062);
    \draw (5.5+0.15, 1.5*1.732+0.2598) -- (5.5+0.35, 1.5*1.732 + 0.6062);
    \draw (1-0.15, 0+0.2598) -- (1-0.35, 0 + 0.6062);
    \draw (2-0.15, 0+0.2598) -- (2-0.35, 0 + 0.6062);
    \draw (3-0.15, 0+0.2598) -- (3-0.35, 0 + 0.6062);
    \draw (4-0.15, 0+0.2598) -- (4-0.35, 0 + 0.6062);
    \draw (1.5-0.15, 0.5*1.732+0.2598) -- (1.5-0.35, 0.5*1.732 + 0.6062);
    \draw (2.5-0.15, 0.5*1.732+0.2598) -- (2.5-0.35, 0.5*1.732 + 0.6062);
    \draw (3.5-0.15, 0.5*1.732+0.2598) -- (3.5-0.35, 0.5*1.732 + 0.6062);
    \draw (4.5-0.15, 0.5*1.732+0.2598) -- (4.5-0.35, 0.5*1.732 + 0.6062);
    \draw (2-0.15, 1.732+0.2598) -- (2-0.35, 1.732 + 0.6062);
    \draw (3-0.15, 1.732+0.2598) -- (3-0.35, 1.732 + 0.6062);
    \draw (4-0.15, 1.732+0.2598) -- (4-0.35, 1.732 + 0.6062);
    \draw (5-0.15, 1.732+0.2598) -- (5-0.35, 1.732 + 0.6062);
    \draw (2.5-0.15, 1.5*1.732+0.2598) -- (2.5-0.35, 1.5*1.732 + 0.6062);
    \draw (3.5-0.15, 1.5*1.732+0.2598) -- (3.5-0.35, 1.5*1.732 + 0.6062);
    \draw (4.5-0.15, 1.5*1.732+0.2598) -- (4.5-0.35, 1.5*1.732 + 0.6062);
    \draw (5.5-0.15, 1.5*1.732+0.2598) -- (5.5-0.35, 1.5*1.732 + 0.6062);
    \end{tikzpicture}
    \caption{(a) The boundary labels of skew hives in $ \SHive(\lambda, \mu, \gamma, \nu)$.
    (b) A skew hive in $\SHive_{\mathbb{Z}}(\lambda, \mu, \gamma, \nu) $ where $\lambda =(3,1,1,0)$; $\gamma=(2,1,0,0)$; $\mu=(5,4,2,1)$; $\nu=(7,4,2,1)$.}
    \label{fig:hive-example}
\end{figure}
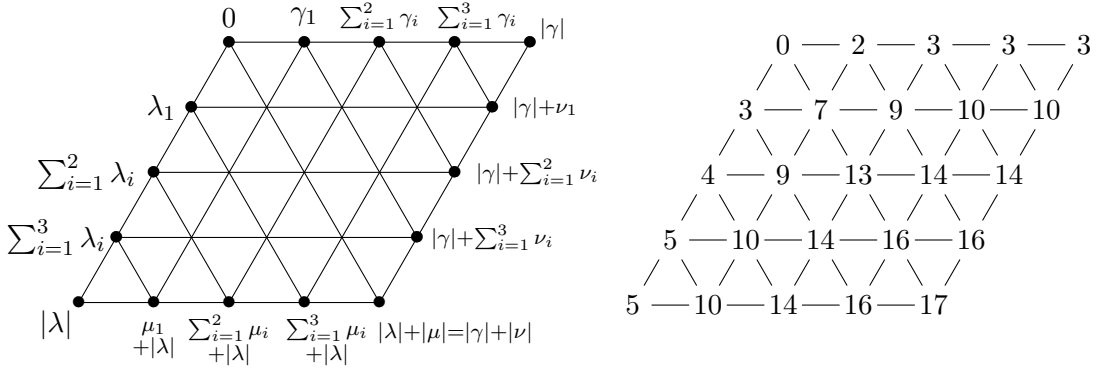
An element of $\SHive(\lambda, \mu, \gamma, \nu)$ is called a skew hive with boundary $(\lambda, \mu, \gamma, \nu)$. The  rows of a skew hive  $h \in \SHive(\lambda, \mu, \gamma, \nu)$ (from top to bottom) give a sequence of vectors $h_0, h_1, \cdots, h_n \in \mathbb{R}^{n+1}$. 
Consider the parallelogram array $\partial h$ with $(n+1)$ rows (and $n$ nodes in each row) whose rows (from top to bottom) are $\partial h_0, \partial h_1, \cdots, \partial h_n$. It follows directly (adapting the arguments of \cite{krvfpsac}) that the positivity of the contents of Northeast and Southeast rhombi in $h$ correspond to positivity (in the skew GT setting of \S\ref{sec:skewgt}) of $NE_{ij}$ and $ SE_{ij}$ of $\partial h$ respectively. Thus $\partial h $ is a skew GT pattern with top row $ \gamma $ and bottom row $ \mu$. It is elementary to check that the map $\partial: \SHive(\lambda, \mu, \gamma, \nu) \longrightarrow \GT(\mu / \gamma, n)$ is linear, injective and maps integer points to integer points.

\begin{theorem}
\label{hive-thrm}
For $\lambda, \mu, \gamma, \nu \in \mathcal{P}[n]$ such that $\gamma \subset \mu, \, \lambda \subset \nu$ and $|\lambda| +|\mu| = |\nu|+|\gamma|$, the map $\Upsilon \circ \partial$ is a bijection between $\SHive_{\mathbb{Z}}(\lambda, \mu, \gamma, \nu)$ \text{and } $Tab_{\lambda} ^{\nu}(\mu/\gamma, n) = \{T \in \Tab(\mu / \gamma, n): b_{T_{\lambda}^{0}} * b_T$ is a dominant word of weight $\nu$\}.    
\end{theorem}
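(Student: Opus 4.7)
The plan is to realize the claimed bijection as $\Upsilon \circ \partial$, where $\partial: \SHive(\lambda,\mu,\gamma,\nu) \to \GT(\mu/\gamma,n)$ is the map from \S\ref{sec:skewhive} and $\Upsilon$ is the bijection of Lemma \ref{louck_lemma}. Since $\partial$ is linear, injective, and preserves integer points, and $\Upsilon$ is bijective, it suffices to show that $\Upsilon \circ \partial$ maps $\SHive_\mathbb{Z}(\lambda,\mu,\gamma,\nu)$ onto $\Tab_\lambda^\nu(\mu/\gamma,n)$. I will verify three ingredients: (i) the NE and SE rhombus inequalities on $h$ correspond to the GT interlacings on $\partial h$ (already noted in \S\ref{sec:skewhive}); (ii) the right-boundary labels of $h$ correspond to the weight condition $wt(T) = \nu - \lambda$ for $T := \Upsilon(\partial h)$; and (iii) the vertical rhombus inequalities on $h$ correspond to $\lambda$-dominance of $T$.

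For (ii), writing $x_{ij} = h(i,j) - h(i,j-1)$, the left and right boundary labels of $h$ give $\sum_{k=1}^n x_{ik} = h(i,n) - h(i,0) = |\gamma| + \bar\nu_i - \bar\lambda_i$, and taking differences of successive row-sums yields $wt(T)_i = \nu_i - \lambda_i$. Conversely, any $X \in \GT_\mathbb{Z}(\mu/\gamma,n)$ with $wt(\Upsilon(X)) = \nu - \lambda$ lifts uniquely to an array $h(i,j) := \bar\lambda_i + \sum_{k=1}^j x_{ik}$ whose four boundary labels match those required of $\SHive(\lambda,\mu,\gamma,\nu)$, and on which the NE/SE contents reduce to the GT inequalities.

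The core of the argument is (iii). A direct substitution using $h(i,j) = \bar\lambda_i + \sum_{k=1}^j x_{ik}$ shows that the vertical rhombus with top node $(i,j)$ has content
\[
(\lambda_{i+1} - \lambda_{i+2}) + N_{i+1,\leq j}(T) - N_{i+2,\leq j+1}(T),
\]
where $N_{m,\leq k}(T)$ denotes the number of $m$'s in rows $1,\ldots,k$ of $T$. Non-negativity is equivalent to $\lambda_{i+1} + N_{i+1,\leq j}(T) \geq \lambda_{i+2} + N_{i+2,\leq j+1}(T)$. On the tableau side, $b_{T_\lambda^0} = 1^{\lambda_1} 2^{\lambda_2} \cdots n^{\lambda_n}$ and $b_T$ reads each row right-to-left, so within any row all $(i{+}2)$'s are read before any $(i{+}1)$. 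Therefore, along prefixes of $b_{T_\lambda^0} * b_T$, the quantity $\#(i{+}1) - \#(i{+}2)$ attains its minimum inside row $j{+}1$ of $b_T$ at the moment just after the $(i{+}2)$'s in that row are read but before the $(i{+}1)$'s, at which point the two counts are exactly $\lambda_{i+1} + N_{i+1,\leq j}(T)$ and $\lambda_{i+2} + N_{i+2,\leq j+1}(T)$. As $(i,j)$ ranges over $\{0,\ldots,n-2\}\times\{0,\ldots,n-1\}$ this sweeps out every critical prefix; dominance within $b_{T_\lambda^0}$ itself is automatic since $\lambda$ is a partition. The main obstacle is precisely this step: the vertical rhombus content mixes three consecutive rows of the GT pattern, and translating it into the correct row-prefix count on $T$ and then recognizing that count as the critical dominance inequality for the right prefix of $b_{T_\lambda^0} * b_T$ is where the substance of the proof lies.
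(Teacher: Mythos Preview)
Your proposal is correct and follows essentially the same approach as the paper: both unwind the formula $h(i,j)=\bar\lambda_i+\sum_{k\le j}x_{ik}$ to identify the NE/SE rhombus contents with the GT interlacings, the right-boundary constraint with the weight condition, and the vertical rhombus contents with the inequalities $\lambda_{i+1}+N_{i+1,\le j}(T)\ge \lambda_{i+2}+N_{i+2,\le j+1}(T)$, then construct the inverse via the same explicit formula. Your treatment of (iii) is in fact more informative than the paper's: the paper simply states that dominance of $b_{T_\lambda^0}*b_T$ is equivalent to $N_{ik}\ge N_{(i+1)(k+1)}$ without argument, whereas you explain why these are the critical prefixes (within a reverse-read row the $(i{+}2)$'s are consumed before the $(i{+}1)$'s, so the running difference attains its row-local minimum at exactly that moment).
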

\begin{proof}
We follow closely the proof of Proposition 4 in \cite{kushwaha2022saturation}.
Let $h \in \SHive_{\mathbb{Z}}(\lambda, \mu, \gamma, \nu)$. By the discussion in the previous para and lemma \ref{louck_lemma}, it is clear that $T=\Upsilon(\partial h) \in \Tab(\mu /\gamma ,n)$.\\
Now we will show that $ T \in Tab_{\lambda} ^{\nu}(\mu/\gamma, n)$.$\text{ }$Let $\partial h=(x_{ij}). \text{ Then } x_{ij}=h_{ij}-h_{i(j-1)}$. So the number of times $i$ appears in row $j$ of  $T=x_{ij}-x_{(i-1)j}$.
Now we have to prove that $ b_{T_{\lambda}^{0}} * b_T$ is a dominant word of weight $\nu$. Let $b_T= b_{T_1}*b_{T_2}*\cdots *b_{T_n}$ where $b_{T_k}$ is the reverse reading word of the $k$-th row of $T$. Also, let $ N_{ik}$  be the number of times $i$ appears in the word  $ b_{T_0}*b_{T_1}* \cdots * b_{T_k}$ (with $b_{T_0} =b_{T_{\lambda}^{0}}$). Then by definition, $ b_{T_{\lambda}^{0}} * b_T$ is dominant iff  $N_{ik} \geq N_{(i+1)(k+1)} \text{ for all } 1 \leq i \leq n, \text{ } 0 \leq k \leq n$.
We get
$$N_{ik} = \lambda_i + (x_{i1}-x_{(i-1)1})+\cdots + (x_{ik}-x_{(i-1)k})
= h_{ik}-h_{(i-1)k} \text{ (in terms of $h$) }$$
So $N_{ik} - N_{(i+1)(k+1)} = h_{ik}+h_{i(k+1)}-h_{(i-1)k}-h_{(i+1)(k+1)} \geq 0 $ by the corresponding vertical rhombus inequality in $h$.
Now $\text{ the number of times $i$ occurs in }b_{T_{\lambda}^{0}} * b_T$ is
$$\lambda_i + (x_{i1}-x_{(i-1)1})+\cdots + (x_{in}-x_{(i-1)n})=h_{in}-h_{(i-1)n} =\nu_i $$
So $ b_{T_{\lambda}^{0}} * b_T$ is a dominant word of weight $\nu$. Thus $T \in Tab_{\lambda} ^{\nu}(\mu/\gamma, n)$.

Now, we will show $\Upsilon \circ \partial$ is injective. It suffices to show that  $\partial$ is injective.
Let $ h=(h_0,h_1,\cdots ,h_n), \, h'=(h'_0,h'_1,\cdots ,h'_n) \in \Hive_\mathbb{Z}(\lambda, \mu, \gamma, \nu)$ where $h_i,h'_i \in \mathbb{Z}^{n+1} \text{ and } \partial h = \partial h'$. Also, let $h_i=(h_{i0},h_{i1},\cdots,h_{in}) \text{ and } h'_i=(h'_{i0},h'_{i1},\cdots,h'_{in})$.
We will show that $h_{ik}=h'_{ik}$ for all $i$ by induction on $k$. Clearly, $h_{00}=h'_{00}=0 \text{ and } h_{i0}= \sum_{j=1}^{i} \lambda_j = h'_{i0}$. So for $k=0$ we get $h_{ik}=h'_{ik}$ for all $i$. Let $ k \geq 1 \text{ and } h_{i(k-1)}=h'_{i(k-1)}$ for all $i$. Since $\partial h_i = \partial h'_i.$ So $h_{ik}-h_{i(k-1)} = h'_{ik}-h'_{i(k-1)} $. Thus $h_{ik}=h'_{ik}$ for all $i$. Hence $ h=h' $.
Therefore $\partial $ is injective.

Next we will prove $ \Upsilon \circ \partial $ is surjective.
Let $ S \in Tab_{\lambda} ^{\nu}(\mu/\gamma, n)$ and  $\Upsilon^{-1}(S)= (s_{ij})$ $\in \GT_{\mathbb{Z}}(\mu / \gamma, n) $ where $s_{0j}= \mu_j, \, s_{ij}= \mu_j + $ the numbers of entries $\leq i$ in the $j^{th}$ row of $S$ $(1 \leq i,j \leq n) $. Define $ \lambda_{0}=0, h_{i0}=\sum_{k=0}^{i} \lambda_k, h_{ij}=h_{i0}+\sum_{k=1}^{j} s_{ik} \text{ }(0 \leq i \leq n, \text{ } 1 \leq j \leq n) \text{ and } h_i=(h_{i0},h_{i1},\cdots,h_{in})$ for all $i$. Consider  the $n$-hive parallelogram $h$ whose rows (from top to bottom) are $h_0, h_1, \cdots, h_n$. We claim that $h \in \SHive_{\mathbb{Z}}(\lambda, \mu, \gamma, \nu)$. The Northeast and Southeast rhombi inequalities of $h$ hold since $ S \in \Tab(\mu /\gamma ,n)$ and the vertical rhombi inequalities of $h$  hold because $ b_{T_{\lambda}^{0}} * b_S $ is dominant. The boundary labels of $h$ can be easily seen to be $\bar{\lambda}$ (left edge, top to bottom), $|\lambda| + \bar{\mu}$ (bottom edge, left to right), $\bar{\gamma}$ (top edge, left to right) and $|\gamma|+\bar{\nu}$ (right edge, top to bottom) respectively. So $ h \in \SHive_\mathbb{Z}(\lambda, \mu, \gamma, \nu)$ and $ (\Upsilon \circ \partial)^{-1} (S)=h $. Thus $\Upsilon \circ \partial $ is surjective.
\end{proof}

\subsection{Flagged skew hives}
Given a flag $\Phi$, consider the face of the skew hive polytope defined by
 $\SHive(\lambda, \mu, \gamma, \nu, \Phi) := \partial ^{-1} ( \GT(\mu / \gamma, \Phi))$. We call this the \emph{flagged skew hive polytope}. We let  $\SHive_{\mathbb{Z}}(\lambda, \mu, \gamma, \nu, \Phi)$ denote the set of integer points in this polytope. It is elementary to observe that 
$\SHive_{\mathbb{Z}}(\lambda, \mu, \gamma, \nu, \Phi) = \partial ^{-1} ( \GT_{\mathbb{Z}}(\mu / \gamma, \Phi))$.
Lemma~\ref{lem:flags} and Theorem~\ref{hive-thrm} together give us the second part of theorem \ref{first result}:
 \begin{theorem}
   The map $\Upsilon \circ \partial$ restricts to a bijection between $\SHive_{\mathbb{Z}}(\lambda, \mu, \gamma, \nu, \Phi)$ and $Tab_{\lambda} ^{\nu}(\mu /\gamma, \Phi)$. Then $c_{\lambda, \,\mu/\gamma} ^{\,\nu} (\Phi)=|\SHive_{\mathbb{Z}}(\lambda, \mu, \gamma, \nu, \Phi)|=|Tab_{\lambda} ^{\nu}(\mu /\gamma, \Phi)|$.    
 \end{theorem}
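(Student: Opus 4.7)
The plan is to leverage the unflagged bijection of Theorem \ref{hive-thrm} together with the flag-tracking Lemma \ref{lem:flags}, and then to invoke Theorem \ref{thm:firstpart} for the counting identity. The whole argument is essentially a short diagram chase verifying that the bijection $\Upsilon \circ \partial$ respects the flag condition on both sides, so I expect no serious obstacle; the structural content has already been established in earlier sections.

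First, I will record that Theorem \ref{hive-thrm} already gives a bijection $\Upsilon \circ \partial : \SHive_{\mathbb{Z}}(\lambda, \mu, \gamma, \nu) \to \Tab_{\lambda}^{\nu}(\mu/\gamma, n)$. The key step is then to show, for an integer skew hive $h$ with boundary $(\lambda, \mu, \gamma, \nu)$, that $h \in \SHive_{\mathbb{Z}}(\lambda, \mu, \gamma, \nu, \Phi)$ if and only if $(\Upsilon \circ \partial)(h) \in \Tab(\mu/\gamma, \Phi)$. By the definition $\SHive_{\mathbb{Z}}(\lambda, \mu, \gamma, \nu, \Phi) = \partial^{-1}(\GT_{\mathbb{Z}}(\mu/\gamma, \Phi))$, the former is equivalent to $\partial h \in \GT_{\mathbb{Z}}(\mu/\gamma, \Phi)$, and by Lemma \ref{lem:flags} this is in turn equivalent to $\Upsilon(\partial h) \in \Tab(\mu/\gamma, \Phi)$. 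Since the image automatically lies in $\Tab_{\lambda}^{\nu}(\mu/\gamma, n)$ by Theorem \ref{hive-thrm}, intersecting yields exactly the target set $\Tab_{\lambda}^{\nu}(\mu/\gamma, n) \cap \Tab(\mu/\gamma, \Phi) = \Tab_{\lambda}^{\nu}(\mu/\gamma, \Phi)$. Conversely, any $T \in \Tab_{\lambda}^{\nu}(\mu/\gamma, \Phi)$ has a unique preimage $h \in \SHive_{\mathbb{Z}}(\lambda, \mu, \gamma, \nu)$ under $\Upsilon \circ \partial$, and the same chain of equivalences shows $\partial h \in \GT_{\mathbb{Z}}(\mu/\gamma, \Phi)$, hence $h$ lies in the flagged subset.

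Once the restricted bijection is in hand, both equalities in the counting statement follow immediately: $|\SHive_{\mathbb{Z}}(\lambda, \mu, \gamma, \nu, \Phi)| = |\Tab_{\lambda}^{\nu}(\mu/\gamma, \Phi)|$ from the bijection, and equality with $c_{\lambda, \mu/\gamma}^{\nu}(\Phi)$ is precisely the content of Theorem \ref{thm:firstpart}. The only thing that could conceivably go wrong is a mismatch in how the flag condition is tracked through $\partial$ versus through $\Upsilon$, but since the flag condition on a skew hive is \emph{defined} as pullback along $\partial$ and Lemma \ref{lem:flags} settles its translation under $\Upsilon$, the two conditions line up by construction.
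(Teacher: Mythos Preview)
Your proposal is correct and matches the paper's own argument exactly: the paper simply states that Lemma~\ref{lem:flags} and Theorem~\ref{hive-thrm} together yield the result, and then invokes Theorem~\ref{thm:firstpart} (the first part of Theorem~\ref{first result}) for the identification with $c_{\lambda,\,\mu/\gamma}^{\,\nu}(\Phi)$. Your diagram chase through $\partial^{-1}(\GT_{\mathbb{Z}}(\mu/\gamma,\Phi))$ and the intersection $\Tab_{\lambda}^{\nu}(\mu/\gamma,n)\cap\Tab(\mu/\gamma,\Phi)$ is precisely the intended reading of that one-line justification.
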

The NE rhombi of the $n$-hive parallelogram are labelled $R_{ij}$ with $1 \leq i,j \leq n$ as shown in the example in Figure~\ref{left bottom}. To the flag $\Phi$, we associate the set $I= \bigcup_{i=1}^n \{(\Phi_i+1,i),\cdots,(n,i)\}$. Then $ R(\Phi)= \bigcup_{(i,j) \in I} \{ R_{ij} \}$ forms a bottom-left-justified region of NE  rhombi (shown in purple in Figure~\ref{left bottom}). It is easy to see from equation~\eqref{eq:flaggt} that  $\SHive_{\mathbb{Z}}(\lambda, \mu, \gamma, \nu, \Phi)$ is obtained from  $\SHive_{\mathbb{Z}}(\lambda, \mu, \gamma, \nu)$ by imposing the condition that all rhombi in $R(\Phi)$ are flat. For example, the hive in figure \ref{fig:hive-example}(b) is such that every NE oriented rhombus in $R(\Phi)$ is flat where $\Phi$ = (2,2,3,4).
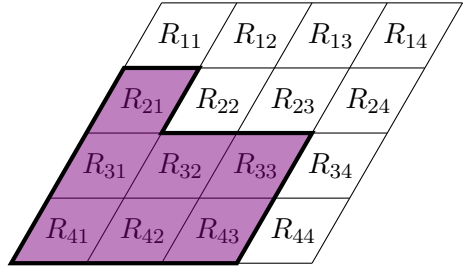
\begin{figure}[t]
    \centering
    \begin{center}
\begin{tikzpicture}
		\draw (0,0) -- (1,0) -- (2,0) -- (3,0);
		\draw (3,0) -- (4,0);
		\draw (0.5, 0.5*1.732) -- (1.5, 0.5*1.732) -- (2.5, 0.5*1.732);
		\draw (2.5, 0.5*1.732) -- (3.5, 0.5*1.732);
		\draw (3.5, 0.5*1.732) -- (4.5, 0.5*1.732);
		\draw (1, 1.732) -- (2, 1.732) -- (3, 1.732) -- (4, 1.732) -- (5, 1.732);
		\draw (1.5, 1.5*1.732) -- (2.5, 1.5*1.732);
		\draw (2.5, 1.5*1.732) -- (3.5, 1.5*1.732) -- (4.5, 1.5*1.732) -- (5.5, 1.5*1.732);
		\draw (2,2*1.732) -- (3,2*1.732);
		\draw (3, 2*1.732) -- (4,2*1.732) -- (5, 2*1.732) -- (6,2*1.732);
		\draw (0,0) -- (0.5, 0.5*1.732) -- (1, 1.732) -- (1.5, 1.5*1.732);
		\draw (1.5, 1.5*1.732) -- (2, 2*1.732);
		\draw (1,0) -- (1.5, 0.5* 1.732) -- (2, 1.732) -- (2.5, 1.5*1.732);
		\draw (2.5, 1.5*1.732) -- (3, 2*1.732);
		\draw (2,0) -- (2.5, 0.5* 1.732) -- (3, 1.732) -- (3.5, 1.5*1.732) -- (4, 2*1.732);
		\draw (3,0) -- (3.5, 0.5* 1.732) -- (4, 1.732) -- (4.5, 1.5*1.732);
		\draw (4.5, 1.5*1.732) -- (5, 2*1.732);
		\draw (4,0) -- (4.5, 0.5* 1.732) -- (5, 1.732);
		\draw (5, 1.732) -- (5.5, 1.5*1.732);
		\draw (5.5, 1.5*1.732) -- (6, 2*1.732);
		
  \node at (0.75, 0.25*1.732) {$R_{41}$};
  \node at (1.75, 0.25*1.732) {$R_{42}$};
  \node at (2.75, 0.25*1.732) {$R_{43}$};
  \node at (3.75, 0.25*1.732) {$R_{44}$};
  \node at (1.25, 0.75*1.732) {$R_{31}$};
  \node at (2.25, 0.75*1.732) {$R_{32}$};
  \node at (3.25, 0.75*1.732) {$R_{33}$};
  \node at (4.25, 0.75*1.732) {$R_{34}$};
  \node at (1.75, 1.25*1.732) {$R_{21}$};
  \node at (2.75, 1.25*1.732) {$R_{22}$};
  \node at (3.75, 1.25*1.732) {$R_{23}$};
  \node at (4.75, 1.25*1.732) {$R_{24}$};
  \node at (2.25, 1.75*1.732) {$R_{11}$};
  \node at (3.25, 1.75*1.732) {$R_{12}$};
  \node at (4.25, 1.75*1.732) {$R_{13}$};
  \node at (5.25, 1.75*1.732) {$R_{14}$};
  \draw [ultra thick, draw=black, fill=violet, fill opacity=0.5]
       (0,0) -- (1.5,1.5*1.732) -- (2.5,1.5*1.732) -- (2,1*1.732) -- (4,1*1.732) --  
       (3,0) -- cycle;
\end{tikzpicture}        
    \end{center}
    \caption{Labelling of NE oriented rhombi in 4-hive parallegram and the shaded region is a typical configuration of $ R(\Phi)$ (for $ \Phi=(1,2,2,4)$)}\label{left bottom}
\end{figure}

\section{Flagged Skew LR coefficients are $w$-refined LR coefficients}
In this section we prove that any skew hive polytope is affinely isomorphic to some \emph{hive polytope} (albeit in twice as many ambient dimensions). Moreover, this isomorphism maps the flagged skew hive polytope to a hive Kogan face corresponding to some $312$-avoiding permutation \cite[\S 2.4]{krvfpsac} and preserves the integral points.

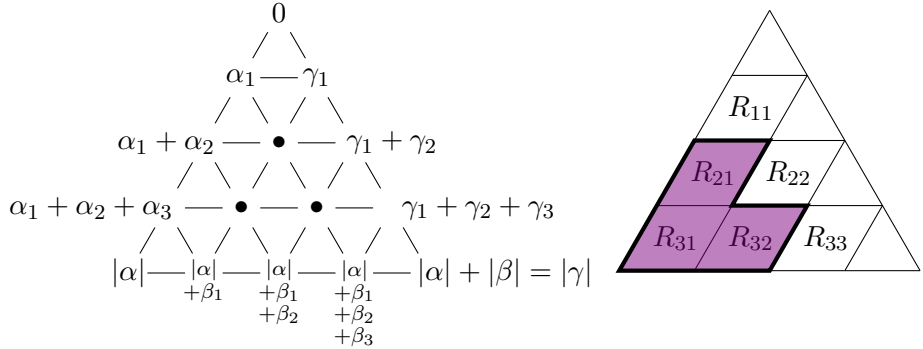
\begin{figure}[h]
    \begin{tikzpicture}
    \draw (3.5,3.5*1.732) node {$0$};
    \draw (3,3*1.732) node {$\alpha_1$};
    \draw (4,3*1.732) node {${\gamma}_1 $};
    \draw (5,2.5*1.732) node {${\gamma}_1 + \gamma_2$};
    \draw (3.5,2.5*1.732) node {$\bullet$};
    \draw (2,2.5*1.732) node {$\alpha_1 + \alpha_2$};
    \draw (1,2*1.732) node {$\alpha_1 + \alpha_2 + \alpha_3$};
    \draw (3,2*1.732) node {$\bullet$};
    \draw (4,2*1.732) node {$\bullet$};
    \draw (6.15,2*1.732) node {$\gamma_1+\gamma_2+\gamma_3$};
    \draw (1.5,1.5*1.732) node {$|\alpha|$};
    \draw (2.5,1.5*1.732-0.1) node {$\substack{|\alpha|\\+\beta_1}$};
    \draw (3.5,1.5*1.732-0.25) node {$\substack{|\alpha|\\+\beta_1\\+\beta_2}$};
    \draw (4.5,1.5*1.732-0.4) node {$\substack{|\alpha|\\+\beta_1\\+\beta_2\\+\beta_3}$};
    \draw (6.5,1.5*1.732) node {$|\alpha|+|\beta| = |\gamma|$};
    \draw (1.75,1.5*1.732) -- (2.25,1.5*1.732);
    \draw (2.75,1.5*1.732) -- (3.25,1.5*1.732);
    \draw (3.75,1.5*1.732) -- (4.25,1.5*1.732);
    \draw (4.75,1.5*1.732) -- (5.25,1.5*1.732);
    \draw (2.75,2.5*1.732) -- (3.25,2.5*1.732);
    \draw (3.75,2.5*1.732) -- (4.25,2.5*1.732);
    \draw (3.25,3*1.732) -- (3.75,3*1.732);
    \draw (2.25,2*1.732) -- (2.75,2*1.732);
    \draw (3.25,2*1.732) -- (3.75,2*1.732);
    \draw (4.25,2*1.732) -- (4.75,2*1.732);
    \draw (1.5+0.15, 1.5*1.732+0.2598) -- (1.5+0.35, 1.5*1.732 + 0.6062);
    \draw (2+0.15, 2*1.732+0.2598) -- (2+0.35, 2*1.732 + 0.6062);
    \draw (2.5+0.15, 2.5*1.732+0.2598) -- (2.5+0.35, 2.5*1.732 + 0.6062);
    \draw (3+0.15, 3*1.732+0.2598) -- (3+0.35, 3*1.732 + 0.6062);
    \draw (2.5+0.15, 1.5*1.732+0.2598) -- (2.5+0.35, 1.5*1.732 + 0.6062);
    \draw (3+0.15, 2*1.732+0.2598) -- (3+0.35, 2*1.732 + 0.6062);
    \draw (3.5+0.15, 2.5*1.732+0.2598) -- (3.5+0.35, 2.5*1.732 + 0.6062);
    \draw (3.5+0.15, 1.5*1.732+0.2598) -- (3.5+0.35, 1.5*1.732 + 0.6062);
    \draw (4+0.15, 2*1.732+0.2598) -- (4+0.35, 2*1.732 + 0.6062);
    \draw (4.5+0.15, 1.5*1.732+0.2598) -- (4.5+0.35, 1.5*1.732 + 0.6062);
    \draw (2.5-0.15, 1.5*1.732+0.2598) -- (2.5-0.35, 1.5*1.732 + 0.6062);
    \draw (3.5-0.15, 1.5*1.732+0.2598) -- (3.5-0.35, 1.5*1.732 + 0.6062);
    \draw (4.5-0.15, 1.5*1.732+0.2598) -- (4.5-0.35, 1.5*1.732 + 0.6062);
    \draw (5.5-0.15, 1.5*1.732+0.2598) -- (5.5-0.35, 1.5*1.732 + 0.6062);
    \draw (3-0.15, 2*1.732+0.2598) -- (3-0.35, 2*1.732 + 0.6062);
    \draw (4-0.15, 2*1.732+0.2598) -- (4-0.35, 2*1.732 + 0.6062);
    \draw (5-0.15, 2*1.732+0.2598) -- (5-0.35, 2*1.732 + 0.6062);
    \draw (3.5-0.15, 2.5*1.732+0.2598) -- (3.5-0.35, 2.5*1.732 + 0.6062);
    \draw (4.5-0.15, 2.5*1.732+0.2598) -- (4.5-0.35, 2.5*1.732 + 0.6062);
    \draw (4-0.15, 3*1.732+0.2598) -- (4-0.35, 3*1.732 + 0.6062);
    \end{tikzpicture}
    \begin{tikzpicture}
		\draw (0,0) -- (1,0) -- (2,0) -- (3,0);
		\draw (3,0) -- (4,0);
		\draw (0.5, 0.5*1.732) -- (1.5, 0.5*1.732) -- (2.5, 0.5*1.732);
		\draw (2.5, 0.5*1.732) -- (3.5, 0.5*1.732);
		%\draw (3.5, 0.5*1.732) -- (4.5, 0.5*1.732);
		\draw (1, 1.732) -- (2, 1.732) -- (3, 1.732);
		\draw (1.5, 1.5*1.732) -- (2.5, 1.5*1.732);
		%\draw (2.5, 1.5*1.732) -- (3.5, 1.5*1.732);
		%\draw (2,2*1.732) -- (3,2*1.732);
		%\draw (3, 2*1.732) -- (4,2*1.732) -- (5, 2*1.732);
		\draw (0,0) -- (0.5, 0.5*1.732) -- (1, 1.732) -- (1.5, 1.5*1.732);
		\draw (1.5, 1.5*1.732) -- (2, 2*1.732);
		\draw (1,0) -- (1.5, 0.5* 1.732) -- (2, 1.732) -- (2.5, 1.5*1.732);
		%\draw (2.5, 1.5*1.732) -- (3, 2*1.732);
		\draw (2,0) -- (2.5, 0.5* 1.732) -- (3, 1.732);
		\draw (3,0) -- (3.5, 0.5* 1.732);
		%\draw (4.5, 1.5*1.732) -- (5, 2*1.732);
		\draw (4,0) -- (3.5, 0.5* 1.732) -- (3, 1.732) -- (2.5,1.5*1.732) -- (2, 2*1.732);
		%\draw (5, 1.732) -- (5.5, 1.5*1.732);
		%\draw (5.5, 1.5*1.732) -- (6, 2*1.732);
		
  \node at (0.75, 0.25*1.732) {$R_{31}$};
  \node at (1.75, 0.25*1.732) {$R_{32}$};
  \node at (2.75, 0.25*1.732) {$R_{33}$};
  \node at (1.25, 0.75*1.732) {$R_{21}$};
  \node at (2.25, 0.75*1.732) {$R_{22}$};
  \node at (1.75, 1.25*1.732) {$R_{11}$};
  \draw [ultra thick, draw=black, fill=violet, fill opacity=0.5]
       (0,0) -- (1,1*1.732) -- (2,1.732) -- (2,1*1.732) -- (1.5,0.5*1.732) -- (2.5,0.5*1.732) -- (2,0) -- cycle;
       \node at (0,-1) {$\null$};
\end{tikzpicture}        
    \caption{(a) A $4$-hive with boundary $(\alpha, \beta, \gamma)$ (b) Labelling of NE oriented rhombi. The shaded region is a typical configuration of $R(\Phi)$, shown here for $\Phi = (2,3,4,4)$} \label{tri-hives}
\end{figure}
Given partitions $\alpha, \beta, \gamma \in \mathcal{P}[n]$ such that $|\alpha| + |\beta| = |\gamma|$, the hive polytope $\Hive(\alpha, \beta, \gamma)$ is the set of all $\mathbb{R}$-labellings of an $(n+1)$-triangular array of nodes such that
\begin{enumerate}
    \item the boundary labels are given by $(0, \alpha_1, \alpha_1 + \alpha_2, \cdots, |\alpha|, |\alpha| + \beta_1, |\alpha|+\beta_1 + \beta_2, \cdots, |\alpha|+|\beta|, \gamma_1+\cdots+\gamma_{n-1}, \cdots, \gamma_1+\gamma_2, \gamma_1)$, reading the nodes anti-clockwise beginning from the topmost node as in figure \ref{tri-hives}.
    \item the contents of all the small rhombi are non-negative.
\end{enumerate}
The horizontal strings of nodes (``rows'') of the triangular array are termed the zeroth row, first row, second row, etc starting from the top.
Consider the labelling of the NE oriented small rhombi as shown in the example in figure \ref{tri-hives}. Given a flag $\Phi$, consider the set of NE rhombi $R(\Phi) = \{R_{ij}| \,n > i \geq \Phi_j\}$. We define the face $\Hive(\alpha, \beta, \gamma, \Phi)$ of the polytope $\Hive(\alpha, \beta, \gamma)$ as the collection of those hives in which all the rhombi in $R(\Phi)$ are flat. As we vary $\Phi$, these run over the {\em hive Kogan faces} corresponding to $312$-avoiding permutations, in the terminology of \cite[\S 2.4]{krvfpsac}. Refer \cite{krvfpsac} for more details.
\begin{lemma}\label{hive_singleton}
    Let $\alpha, \beta, \gamma \in \mathbb R _{+} ^n$ be weakly decreasing sequences such that either $\alpha$ or $\beta$ is a constant sequence. Then $\Hive(\alpha, \beta, \gamma)$ is either empty or a singleton set. The latter is true if and only if $\alpha+\beta = \gamma$. 
\end{lemma}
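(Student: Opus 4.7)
The plan is to handle the case ``$\alpha$ constant'' in detail and cover the case ``$\beta$ constant'' by the same argument with rows playing the role of columns. So set $\alpha = (c, c, \ldots, c)$. The key observation is that two of the three rhombus families alone---the vertical rhombi and the NE rhombi of \S\ref{sec:skewhive}---force every unit edge difference along a ``NE-SW'' column of the hive to be equal to $c$; this rigidity pins down a unique hive and simultaneously forces $\gamma = \alpha + \beta$.

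For the ``if'' direction, assume $\gamma = \alpha + \beta$ and exhibit the explicit labeling $h(k, j) := kc + \sum_{i=1}^{j} \beta_i$ for $0 \le j \le k \le n$. The three boundary conditions on $h$ are immediate (for instance, the right-boundary label at row $k$ reads $\sum_{i \le k}(c + \beta_i) = \sum_{i \le k} \gamma_i$), and a short calculation shows that the NE and vertical rhombus contents vanish identically, while the SE rhombus content equals $\beta_{j+1} - \beta_{j+2} \ge 0$. Hence $h \in \Hive(\alpha, \beta, \gamma)$, giving existence.

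For the ``only if'' and uniqueness parts, let $h$ be any element of $\Hive((c^n), \beta, \gamma)$ and write $\Delta(k, j) := h(k, j) - h(k-1, j)$ for each edge going from row $k-1$ to row $k$ in column $j$. The left boundary immediately gives $\Delta(k, 0) = c$. A direct reading of the rhombus inequalities shows that the NE rhombus content rearranges to $\Delta(k, j+1) \ge \Delta(k, j)$ (so $\Delta$ is non-decreasing as one moves right along a row), while the vertical rhombus content rearranges to $\Delta(k-1, j-1) \ge \Delta(k, j)$ (so $\Delta$ is non-increasing as one moves one step down-right). Iterating these two monotonicities from the left boundary sandwiches $c = \Delta(k, 0) \le \Delta(k, j) \le \Delta(k-j, 0) = c$, hence $\Delta(k, j) = c$ throughout. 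With every such column difference equal to $c$, $h$ is determined by the right boundary as $h(k, j) = \sum_{i \le j} \gamma_i + (k - j) c$; imposing the bottom boundary $h(n, j) = nc + \sum_{i \le j} \beta_i$ and taking successive differences then forces $\gamma_j = c + \beta_j$ for all $j$, i.e., $\gamma = \alpha + \beta$.

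The main obstacle I expect is the sign bookkeeping for the two rhombus families: one must carefully verify the placement of the obtuse vertices (those lying on the short diagonal of each rhombus) so that the two content inequalities really do sandwich $\Delta$ from opposite sides rather than in the same direction. A smaller issue is the reduction ``$\beta$ constant'' $\Rightarrow$ ``$\alpha$ constant'', which is not a literal geometric symmetry of the hive polytope; the cleanest workaround is to rerun the same sandwich argument using the horizontal row differences (which on the bottom edge all equal $c$) in place of the column differences from the left edge.
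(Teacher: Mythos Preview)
Your proof is correct, but it proceeds along a genuinely different axis from the paper's. The paper treats the case $\beta$ constant first and works with \emph{horizontal row differences}: using the trapezium formed by an overlapping NE and SE rhombus, it shows that if the successive differences along row $k$ are all equal to $b$, then so are those along row $k-1$; induction from the bottom row (where this holds by the $\beta$ boundary) then determines every label from the left edge, and matching the right edge forces $\gamma-\alpha=\beta$. You instead treat $\alpha$ constant and work with the \emph{NE-diagonal (column) differences} $\Delta(k,j)=h(k,j)-h(k-1,j)$: the NE rhombus gives $\Delta$ non-decreasing to the right, the vertical rhombus gives $\Delta$ non-increasing down-right, and these two monotonicities sandwich $\Delta$ against the left-edge value $c$ with no induction needed. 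Your approach is arguably a bit slicker---one clean squeeze rather than a row-by-row propagation---while the paper's trapezium argument has the mild advantage that the same pair of rhombi (NE and SE) handles both the $\alpha$-constant and $\beta$-constant cases symmetrically, just with the trapezium flipped. Your closing remark that the $\beta$-constant case requires rerunning the argument with row differences (since there is no literal polytope symmetry interchanging $\alpha$ and $\beta$) is exactly right; that rerun is precisely the paper's argument. The off-by-one in your SE content (it is $\beta_j-\beta_{j+1}$ in the natural indexing, not $\beta_{j+1}-\beta_{j+2}$) is cosmetic.
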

\begin{proof} Supposing first that $\beta=(\beta_1, \beta_2, \cdots, \beta_n)$ is constant, i.e., $\beta_i=b$ (say) for all $i$. If $\Hive(\alpha, \beta, \gamma)$ is non-empty, let $h$ be an element. Define $\kappa = \gamma - \alpha \in \mathbb R ^n$. The labels along the left and right edges of $h$ are $\sum_{i=1}^k \alpha_i$ and $\sum_{i=1}^k \gamma_i$ for $0 \leq k \leq n$. Now consider the following types of trapezia formed by two overlapping unit rhombi, one NE and the other SE.
    \begin{center}
    \begin{tikzpicture}
    \draw (2.5,1.5*1.732) node {$d$};
    \draw (3.5,1.5*1.732) node {$e$};
    \draw (2,1.732) node {$c$};
    \draw (3,1.732) node {$b$};
    \draw (4,1.732) node {$a$};
    \draw (2.25,1.732) -- (2.75,1.732);
    \draw (3.25,1.732) -- (3.75,1.732);
    \draw (2.75,1.5*1.732) -- (3.25,1.5*1.732);
    \draw (2+0.15, 1.732+0.2598) -- (2+0.35, 1.732 + 0.6062);
    \draw (3+0.15, 1.732+0.2598) -- (3+0.35, 1.732 + 0.6062);
    \draw (3-0.15, 1.732+0.2598) -- (3-0.35, 1.732 + 0.6062);
    \draw (4-0.15, 1.732+0.2598) -- (4-0.35, 1.732 + 0.6062);
    \end{tikzpicture}
    \end{center}
From the two rhombus inequalities in this picture, we conclude that if $a-b = b-c$, then each is also equal to $e-d$. Now $\beta_i=b$ for all $i$ implies that the successive differences of labels on the bottom (i.e., $n^{th}$) row of $h$ are all equal to $b$. The observation above means that the successive differences of labels on the $(n-1)^{th}$ row are also all $b$. We proceed by induction, moving up the hive triangle, to conclude that the successive differences of labels along every row of $h$ is equal to $b$. In particular, all labels of $h$ are uniquely determined from those on  left boundary alone. An additional compatibility condition arises from summing the differences in each row - this gives $\sum_{i=1}^k \kappa_i = kb$ for each $k$, or equivalently that $\kappa = \beta$ as claimed. 

The proof for the case that $\alpha$ is constant is similar. One considers instead the trapezia of the form:
\begin{center}
    \begin{tikzpicture}
    \draw (3,2*1.732) node {$a$};
    \draw (2.5,1.5*1.732) node {$b$};
    \draw (3.5,1.5*1.732) node {$e$};
    \draw (2,1.732) node {$c$};
    \draw (3,1.732) node {$d$};
    \draw (2.25,1.732) -- (2.75,1.732);
    \draw (2.75,1.5*1.732) -- (3.25,1.5*1.732);
    \draw (2+0.15, 1.732+0.2598) -- (2+0.35, 1.732 + 0.6062);
    \draw (2.5+0.15, 1.5*1.732+0.2598) -- (2.5+0.35, 1.5*1.732 + 0.6062);
    \draw (3+0.15, 1.732+0.2598) -- (3+0.35, 1.732 + 0.6062);
    \draw (3-0.15, 1.732+0.2598) -- (3-0.35, 1.732 + 0.6062);
    \draw (3.5-0.15, 1.5*1.732+0.2598) -- (3.5-0.35, 1.5*1.732 + 0.6062);
    \end{tikzpicture}
    \end{center}
\end{proof}
\begin{proposition}
    Let $\lambda, \mu, \nu, \gamma \in \mathcal{P}[n]$ be such that $\gamma \subset \mu, \, \lambda \subset \nu$ and $|\lambda| +|\mu| = |\nu|+|\gamma|$. Define  $\Tilde{\lambda}, \Tilde{\mu}, \Tilde{\nu} \in \mathcal{P}[2n]$ by $\Tilde{\lambda} = (\nu_1, \cdots, \nu_1, \lambda_1, \cdots,  \lambda_n), \, \Tilde{\mu} = (\mu_1, \mu_2, \cdots, \mu_n, 0,0,\cdots, 0)$ and $\Tilde{\nu} = (\nu_1 + \gamma_1, \cdots, \nu_1 + \gamma_n, \nu_1, \cdots, \nu_n)$. Then there exists an affine linear isomorphism between $\SHive(\lambda, \mu, \gamma, \nu)$ and $\Hive(\Tilde{\lambda}, \Tilde{\mu}, \Tilde{\nu})$.
    Moreover, the isomorphism preserves integral points and maps $\SHive(\lambda, \mu, \gamma, \nu, \Phi)$ onto $\Hive(\Tilde{\lambda}, \Tilde{\mu}, \Tilde{\nu}, \Tilde{\Phi})$, where $\Tilde{\Phi} = (\Phi_1 +n, \cdots, \Phi_n +n)$.
\end{proposition}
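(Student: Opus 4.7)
The plan is to extend an $\SHive(\lambda, \mu, \gamma, \nu)$-datum to a labelling of the full $2n$-triangle by filling in the labels of two auxiliary sub-triangles whose values are forced. Decompose the $2n$-triangle into three size-$n$ pieces sharing only boundary edges: a top sub-triangle $T_{\mathrm{top}}$ on rows $0,\ldots,n$, the skew-hive parallelogram $P$ on rows $n,\ldots,2n$ and columns $0,\ldots,n$, and a bottom-right sub-triangle $T_{\mathrm{br}}$ on rows $n,\ldots,2n$ and columns $n,\ldots,2n$. By construction of $\Tilde{\lambda}$ (whose first $n$ parts equal $\nu_1$), the left-edge differences of $T_{\mathrm{top}}$ form the constant sequence $(\nu_1,\ldots,\nu_1)$; by construction of $\Tilde{\mu}$ (whose last $n$ parts are $0$), the bottom-edge differences of $T_{\mathrm{br}}$ are identically $0$. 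Lemma~\ref{hive_singleton} therefore pins down unique labels on these two pieces: the node at position $(k,j)$ of $T_{\mathrm{top}}$ must carry label $k\nu_1 + \bar\gamma_j$, and the node at position $(n+r,n+c)$ of $T_{\mathrm{br}}$ must carry label $n\nu_1 + |\gamma| + \bar\nu_r$. For $h \in \SHive(\lambda,\mu,\gamma,\nu)$, I define $\Theta(h)$ to carry these forced values on the sub-triangles and to carry $n\nu_1 + h(r,c)$ at the parallelogram node in position $(r,c)$. A direct check confirms that the outer boundary of $\Theta(h)$ matches $(\Tilde{\lambda}, \Tilde{\mu}, \Tilde{\nu})$.

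The crucial step is verifying that every rhombus inequality of $\Theta(h)$ holds. Rhombi wholly inside $T_{\mathrm{top}}$ have content $0$ (NE, vertical) or $\gamma_{j+1} - \gamma_{j+2} \geq 0$ (SE); symmetrically, rhombi wholly inside $T_{\mathrm{br}}$ have content $0$ (NE, SE) or $\nu_{r+1} - \nu_{r+2} \geq 0$ (vertical). Rhombi wholly inside $P$ translate directly to skew-hive rhombi of $h$, whose contents are unaffected by the global shift. The seam rhombi across row $n$ are only vertical, with content $\nu_1 + \bar\gamma_{j+1} - h_{1,j+1}$; this is nonnegative because the NE rhombus inequalities in $h$ force $c \mapsto h_{1,c} - h_{0,c}$ to be weakly increasing in $c$, and the right boundary of $h$ makes this difference equal to $\nu_1$ at $c = n$. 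The seam rhombi across column $n$ split into NE (content $0$), vertical (content $\nu_{r+1} - \nu_{r+2} \geq 0$), and SE (content $h_{r,n} - h_{r,n-1}$); the SE case reduces to the nonnegativity of horizontal differences of $h$, which follows from the NE rhombus inequalities forcing $r \mapsto h_{r,c+1} - h_{r,c}$ to be weakly increasing together with the top-row values $h_{0,c+1} - h_{0,c} = \gamma_{c+1} \geq 0$.

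The inverse of $\Theta$ is immediate from Lemma~\ref{hive_singleton}: any $H \in \Hive(\Tilde{\lambda}, \Tilde{\mu}, \Tilde{\nu})$ carries the same forced values on $T_{\mathrm{top}}$ and $T_{\mathrm{br}}$, so $H$ is determined by its restriction to $P$, and shifting this restriction down by $n\nu_1$ yields a skew hive with the prescribed boundary (rhombus inequalities inherited from $H$). Integrality is evident since the shift and the forced fills are integer-valued on integer inputs. For the flag, I extend $\Tilde{\Phi}$ to length $2n$ by appending $2n$'s (vacuous in view of the trailing zeros of $\Tilde{\mu}$); since $\Tilde{\Phi}_j = \Phi_j + n > n$ for $1 \leq j \leq n$, the set $R(\Tilde{\Phi})$ of flat NE rhombi lies entirely inside $P$, and a shift-by-$n$ of the row index identifies $R(\Tilde{\Phi})$ with $R(\Phi)$ as a subset of the NE rhombi of the parallelogram. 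Hence $\Theta$ restricts to a bijection between the flagged faces. The main obstacle I anticipate is the seam-rhombus verification, and in particular the two monotonicity consequences of the NE rhombus inequalities of $h$ that are needed to bound $h_{1,j+1}$ and $h_{r,n-1}$ from above.
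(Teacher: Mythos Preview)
Your proof is correct and follows essentially the same approach as the paper: both decompose the $2n$-triangle into the top triangle, the bottom-left parallelogram (filled by $h+n\nu_1$), and the bottom-right triangle; both appeal to Lemma~\ref{hive_singleton} to force the two auxiliary triangles; and both reduce well-definedness to the seam rhombi, handling the vertical ones via the bound $h_{1,c}-h_{0,c}\leq\nu_1$ and the SE ones via nonnegativity of horizontal differences in $h$. The only cosmetic differences are that you write down the forced labels $k\nu_1+\bar\gamma_j$ and $n\nu_1+|\gamma|+\bar\nu_r$ explicitly rather than leaving them implicit, and you derive the horizontal monotonicity of $h$ from the NE rhombi together with the top row $\gamma_{c+1}\geq 0$, whereas the paper derives it from the SE rhombi together with the bottom row $\mu_t\geq 0$; both routes are valid.
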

\begin{proof}
For $h \in \SHive(\lambda, \mu, \gamma, \nu)$ we describe $\psi(h)$ as follows (see figure \ref{par_to_tri} for a representative example):
\begin{enumerate}
    \item  $\psi(h)$ is a labelling of the $(2n+1)$- triangular array, with boundary labels coinciding with those of hives in  $\Hive(\Tilde{\lambda}, \Tilde{\mu}, \Tilde{\nu})$.
    \item The bottom-left-justified $(n+1) \times (n+1)$ parallelogram in $\psi(h)$ (white, with a red border in figure \ref{par_to_tri}) is labelled by $h + n \cdot \nu_1$, i.e., the labels of the nodes of $h$ are translated by the constant $n \cdot \nu_1$.
    \item The labels of the top $n$ rows of $\psi(h)$ (highlighted in yellow in figure \ref{par_to_tri}) are determined by the boundary conditions of $\Hive(\Tilde{\lambda}, \Tilde{\mu}, \Tilde{\nu})$ and the choice of parallelogram labels in (2) above. This follows from lemma \ref{hive_singleton}, using the fact that the first $n$ components of $\Tilde{\lambda}$ are equal 
    \item Again, using (2) and  the fact that the last $n$ components of $\Tilde{\mu}$ are equal, lemma \ref{hive_singleton} implies that the labels of the bottom-right-justified $(n+1)$-triangular subarray in
      $\psi(h)$ (highlighted in blue in figure \ref{par_to_tri}) are determined by the boundary conditions of $\Hive(\Tilde{\lambda}, \Tilde{\mu}, \Tilde{\nu})$.
\end{enumerate}
\begin{figure}[h]
    \begin{tikzpicture}
    \draw (2,2*1.732) node {$0$};
    \draw (3,2*1.732) node {$a$};
    \draw (4,2*1.732) node {$b$};
    \draw (5,2*1.732) node {$c$};
    \draw (1.5,1.5*1.732) node {$d$};
    \draw (2.5,1.5*1.732) node {$e$};
    \draw (3.5,1.5*1.732) node {$f$};
    \draw (4.5,1.5*1.732) node {$g$};
    \draw (1,1.732) node {$h$};
    \draw (2,1.732) node {$i$};
    \draw (3,1.732) node {$j$};
    \draw (4,1.732) node {$k$};
    \draw (0.5,0.5*1.732) node {$l$};
    \draw (1.5,0.5*1.732) node {$m$};
    \draw (2.5,0.5*1.732) node {$n$};
    \draw (3.5,0.5*1.732) node {$o$};
    \draw[red, thick] (0.75,0.5*1.732) -- (1.25,0.5*1.732);
    \draw[red, thick] (1.75,0.5*1.732) -- (2.25,0.5*1.732);
    \draw[red, thick] (2.75,0.5*1.732) -- (3.25,0.5*1.732);
    \draw (1.25,1.732) -- (1.75,1.732);
    \draw (2.25,1.732) -- (2.75,1.732);
    \draw (3.25,1.732) -- (3.75,1.732);
    \draw (1.75,1.5*1.732) -- (2.25,1.5*1.732);
    \draw (2.75,1.5*1.732) -- (3.25,1.5*1.732);
    \draw (3.75,1.5*1.732) -- (4.25,1.5*1.732);
    \draw[red, thick] (2.25,2*1.732) -- (2.75,2*1.732);
    \draw[red, thick] (3.25,2*1.732) -- (3.75,2*1.732);
    \draw[red, thick] (4.25,2*1.732) -- (4.75,2*1.732);
    \draw[red, thick] (0.5+0.15, 0.5*1.732+0.2598) -- (0.5+0.35, 0.5*1.732 + 0.6062);
    \draw[red, thick] (1+0.15, 1.732+0.2598) -- (1+0.35, 1.732 + 0.6062);
    \draw[red, thick] (1.5+0.15, 1.5*1.732+0.2598) -- (1.5+0.35, 1.5*1.732 + 0.6062);
    \draw (1.5+0.15, 0.5*1.732+0.2598) -- (1.5+0.35, 0.5*1.732 + 0.6062);
    \draw (2+0.15, 1.732+0.2598) -- (2+0.35, 1.732 + 0.6062);
    \draw (2.5+0.15, 1.5*1.732+0.2598) -- (2.5+0.35, 1.5*1.732 + 0.6062);
    \draw (2.5+0.15, 0.5*1.732+0.2598) -- (2.5+0.35, 0.5*1.732 + 0.6062);
    \draw (3+0.15, 1.732+0.2598) -- (3+0.35, 1.732 + 0.6062);
    \draw (3.5+0.15, 1.5*1.732+0.2598) -- (3.5+0.35, 1.5*1.732 + 0.6062);
    \draw[red, thick] (3.5+0.15, 0.5*1.732+0.2598) -- (3.5+0.35, 0.5*1.732 + 0.6062);
    \draw[red, thick] (4+0.15, 1.732+0.2598) -- (4+0.35, 1.732 + 0.6062);
    \draw[red, thick] (4.5+0.15, 1.5*1.732+0.2598) -- (4.5+0.35, 1.5*1.732 + 0.6062);
    \draw (1.5-0.15, 0.5*1.732+0.2598) -- (1.5-0.35, 0.5*1.732 + 0.6062);
    \draw (2.5-0.15, 0.5*1.732+0.2598) -- (2.5-0.35, 0.5*1.732 + 0.6062);
    \draw (3.5-0.15, 0.5*1.732+0.2598) -- (3.5-0.35, 0.5*1.732 + 0.6062);
    \draw (2-0.15, 1.732+0.2598) -- (2-0.35, 1.732 + 0.6062);
    \draw (3-0.15, 1.732+0.2598) -- (3-0.35, 1.732 + 0.6062);
    \draw (4-0.15, 1.732+0.2598) -- (4-0.35, 1.732 + 0.6062);
    \draw (2.5-0.15, 1.5*1.732+0.2598) -- (2.5-0.35, 1.5*1.732 + 0.6062);
    \draw (3.5-0.15, 1.5*1.732+0.2598) -- (3.5-0.35, 1.5*1.732 + 0.6062);
    \draw (4.5-0.15, 1.5*1.732+0.2598) -- (4.5-0.35, 1.5*1.732 + 0.6062);
    \end{tikzpicture}
    \begin{tikzpicture}
    \draw (0,0) node {$\null$};
    \draw[|->] (1,2) -- (2,2);
    \draw (1.45,2.3) node {$\psi$};
    \end{tikzpicture}
    \begin{tikzpicture}
    %\draw[fill = Turquoise] (3.5,0.5*1.732) -- (4.5,0.5*1.732) -- (4, 1.732) -- (3,1.732) -- (3.5, 0.5*1.732);
    %\draw[fill = Turquoise] (4,1.732) --  (3.5,1.5*1.732) -- (4.5,1.5*1.732) -- (5,1.732) --(4,1.732);
    %\draw[fill = Turquoise] (4.5,1.5*1.732) --  (4,2*1.732) -- (5,2*1.732) -- (5.5,1.5*1.732) --(4.5,1.5*1.732);
    %\draw[fill = LimeGreen] (2.5,1.5*1.732) -- (2,2*1.732) -- (2.5,2.5*1.732) -- (3,2*1.732) -- (2.5,1.5*1.732);
    %\draw[fill = LimeGreen] (3.5,1.5*1.732) -- (3,2*1.732) -- (3.5,2.5*1.732) -- (4,2*1.732) -- (3.5,1.5*1.732);
    %\draw[fill = LimeGreen] (4.5,1.5*1.732) -- (4,2*1.732) -- (4.5,2.5*1.732) -- (5,2*1.732) -- (4.5,1.5*1.732);
    \draw [fill=yellow, draw=none, fill opacity=0.5]
       (2,2*1.732) -- (3.5,3.5*1.732) -- (5,2*1.732) -- cycle;
    \draw [fill=blue, draw=none, fill opacity=0.2]
       (5,2*1.732) -- (3.5 ,0.5*1.732) -- (6.5,0.5*1.732) -- cycle;
    \draw (3.5,3.5*1.732) node {$0$};
    \draw (3,3*1.732) node {$\nu_1$};
    \draw (4+0.5,3*1.732) node {$\nu_1 + {\gamma}_1 $};
    \draw (5+0.5,2.5*1.732) node {$2\nu_1 + {\gamma}_1 + \gamma_2$};
    \draw (3.5,2.5*1.732) node {$q$};
    \draw (2.5,2.5*1.732) node {$2 \nu_1$};
    \draw (2,2*1.732) node {$ 3 \nu_1$};
    \draw (3,2*1.732) node {$a'$};
    \draw (4,2*1.732) node {$b'$};
    \draw (5,2*1.732) node {$c'$};
%    \draw (6,2*1.732) node {$3$};
    \draw (1.5,1.5*1.732) node {$d'$};
    \draw (2.5,1.5*1.732) node {$e'$};
    \draw (3.5,1.5*1.732) node {$f'$};
    \draw (4.5,1.5*1.732) node {$g'$};
    \draw (5.5,1.5*1.732) node {$g'$};
%    \draw (5.5,1.5*1.732) node {$h$};
    \draw (1,1.732) node {$h'$};
    \draw (2,1.732) node {$i'$};
    \draw (3,1.732) node {$j'$};
    \draw (4,1.732) node {$k'$};
    \draw (5,1.732) node {$k'$};
    \draw (6,1.732) node {$k'$};
%    \draw (5,1.732) node {$14$};
    \draw (0.5,0.5*1.732) node {$l'$};
    \draw (1.5,0.5*1.732) node {$m'$};
    \draw (2.5,0.5*1.732) node {$n'$};
    \draw (3.5,0.5*1.732) node {$o'$};
    \draw (4.5,0.5*1.732) node {$o'$};
    \draw (5.5,0.5*1.732) node {$o'$};
    \draw (6.5,0.5*1.732) node {$o'$};
    \draw[red, thick] (0.75,0.5*1.732) -- (1.25,0.5*1.732);
    \draw[red, thick] (1.75,0.5*1.732) -- (2.25,0.5*1.732);
    \draw[red, thick] (2.75,0.5*1.732) -- (3.25,0.5*1.732);
    \draw (3.75,0.5*1.732) -- (4.25,0.5*1.732);
    \draw (4.75,0.5*1.732) -- (5.25,0.5*1.732);
    \draw (5.75,0.5*1.732) -- (6.25,0.5*1.732);
    \draw (1.25,1.732) -- (1.75,1.732);
    \draw (2.25,1.732) -- (2.75,1.732);
    \draw (3.25,1.732) -- (3.75,1.732);
    \draw (4.25,1.732) -- (4.75,1.732);
    \draw (5.25,1.732) -- (5.75,1.732);
    \draw (1.75,1.5*1.732) -- (2.25,1.5*1.732);
    \draw (2.75,1.5*1.732) -- (3.25,1.5*1.732);
    \draw (3.75,1.5*1.732) -- (4.25,1.5*1.732);
    \draw (4.75,1.5*1.732) -- (5.25,1.5*1.732);
    \draw (2.75,2.5*1.732) -- (3.25,2.5*1.732);
    \draw (3.75,2.5*1.732) -- (4.25,2.5*1.732);
    \draw (3.25,3*1.732) -- (3.75,3*1.732);
    \draw[red, thick] (2.25,2*1.732) -- (2.75,2*1.732);
%    \draw (5.25,2*1.732) -- (5.75,2*1.732);
    \draw[red, thick] (3.25,2*1.732) -- (3.75,2*1.732);
    \draw[red, thick] (4.25,2*1.732) -- (4.75,2*1.732);
%    \draw (0+0.15, 0+0.2598) -- (0.35, 0.6062);
    \draw[red, thick] (0.5+0.15, 0.5*1.732+0.2598) -- (0.5+0.35, 0.5*1.732 + 0.6062);
    \draw[red, thick] (1+0.15, 1.732+0.2598) -- (1+0.35, 1.732 + 0.6062);
    \draw[red, thick] (1.5+0.15, 1.5*1.732+0.2598) -- (1.5+0.35, 1.5*1.732 + 0.6062);
    \draw[blue, thick] (2+0.15, 2*1.732+0.2598) -- (2+0.35, 2*1.732 + 0.6062);
    \draw (2.5+0.15, 2.5*1.732+0.2598) -- (2.5+0.35, 2.5*1.732 + 0.6062);
    \draw (3+0.15, 3*1.732+0.2598) -- (3+0.35, 3*1.732 + 0.6062);
%    \draw (1+0.15, 0+0.2598) -- (1+0.35, 0 + 0.6062);
    \draw (1.5+0.15, 0.5*1.732+0.2598) -- (1.5+0.35, 0.5*1.732 + 0.6062);
    \draw (2+0.15, 1.732+0.2598) -- (2+0.35, 1.732 + 0.6062);
    \draw[green, thick] (2.5+0.15, 1.5*1.732+0.2598) -- (2.5+0.35, 1.5*1.732 + 0.6062);
    \draw[blue, thick] (3+0.15, 2*1.732+0.2598) -- (3+0.35, 2*1.732 + 0.6062);
    \draw (3.5+0.15, 2.5*1.732+0.2598) -- (3.5+0.35, 2.5*1.732 + 0.6062);
%    \draw (2+0.15, 0+0.2598) -- (2+0.35, 0 + 0.6062);
    \draw (2.5+0.15, 0.5*1.732+0.2598) -- (2.5+0.35, 0.5*1.732 + 0.6062);
    \draw (3+0.15, 1.732+0.2598) -- (3+0.35, 1.732 + 0.6062);
    \draw[green, thick] (3.5+0.15, 1.5*1.732+0.2598) -- (3.5+0.35, 1.5*1.732 + 0.6062);
    \draw[blue, thick] (4+0.15, 2*1.732+0.2598) -- (4+0.35, 2*1.732 + 0.6062);
%    \draw (3+0.15, 0+0.2598) -- (3+0.35, 0 + 0.6062);
    \draw[red, thick] (3.5+0.15, 0.5*1.732+0.2598) -- (3.5+0.35, 0.5*1.732 + 0.6062);
    \draw[red, thick] (4+0.15, 1.732+0.2598) -- (4+0.35, 1.732 + 0.6062);
    \draw[red, thick] (4.5+0.15, 1.5*1.732+0.2598) -- (4.5+0.35, 1.5*1.732 + 0.6062);
    \draw[green, thick] (4.6+0.15, 1.5*1.732+0.2598) -- (4.6+0.35, 1.5*1.732 + 0.6062);
%    \draw (4+0.15, 0+0.2598) -- (4+0.35, 0 + 0.6062);
    \draw (4.5+0.15, 0.5*1.732+0.2598) -- (4.5+0.35, 0.5*1.732 + 0.6062);
    \draw (5+0.15, 1.732+0.2598) -- (5+0.35, 1.732 + 0.6062);
    \draw (5.5+0.15, 0.5*1.732+0.2598) -- (5.5+0.35, 0.5*1.732 + 0.6062);
    
%    \draw (1-0.15, 0+0.2598) -- (1-0.35, 0 + 0.6062);
%    \draw (2-0.15, 0+0.2598) -- (2-0.35, 0 + 0.6062);
%    \draw (3-0.15, 0+0.2598) -- (3-0.35, 0 + 0.6062);
%    \draw (4-0.15, 0+0.2598) -- (4-0.35, 0 + 0.6062);
    \draw (1.5-0.15, 0.5*1.732+0.2598) -- (1.5-0.35, 0.5*1.732 + 0.6062);
    \draw (2.5-0.15, 0.5*1.732+0.2598) -- (2.5-0.35, 0.5*1.732 + 0.6062);
    \draw (3.5-0.15, 0.5*1.732+0.2598) -- (3.5-0.35, 0.5*1.732 + 0.6062);
    \draw (4.5-0.15, 0.5*1.732+0.2598) -- (4.5-0.35, 0.5*1.732 + 0.6062);
    \draw (5.5-0.15, 0.5*1.732+0.2598) -- (5.5-0.35, 0.5*1.732 + 0.6062);
    \draw (6.5-0.15, 0.5*1.732+0.2598) -- (6.5-0.35, 0.5*1.732 + 0.6062);
    \draw (2-0.15, 1.732+0.2598) -- (2-0.35, 1.732 + 0.6062);
    \draw (3-0.15, 1.732+0.2598) -- (3-0.35, 1.732 + 0.6062);
    \draw (4-0.15, 1.732+0.2598) -- (4-0.35, 1.732 + 0.6062);
    \draw (5-0.15, 1.732+0.2598) -- (5-0.35, 1.732 + 0.6062);
    \draw (6-0.15, 1.732+0.2598) -- (6-0.35, 1.732 + 0.6062);
    \draw (2.5-0.15, 1.5*1.732+0.2598) -- (2.5-0.35, 1.5*1.732 + 0.6062);
    \draw (3.5-0.15, 1.5*1.732+0.2598) -- (3.5-0.35, 1.5*1.732 + 0.6062);
    \draw (4.5-0.15, 1.5*1.732+0.2598) -- (4.5-0.35, 1.5*1.732 + 0.6062);
    \draw (5.5-0.15, 1.5*1.732+0.2598) -- (5.5-0.35, 1.5*1.732 + 0.6062);
    \draw (3-0.15, 2*1.732+0.2598) -- (3-0.35, 2*1.732 + 0.6062);
    \draw (4-0.15, 2*1.732+0.2598) -- (4-0.35, 2*1.732 + 0.6062);
    \draw (5-0.15, 2*1.732+0.2598) -- (5-0.35, 2*1.732 + 0.6062);
    \draw (3.5-0.15, 2.5*1.732+0.2598) -- (3.5-0.35, 2.5*1.732 + 0.6062);
    \draw (4.5-0.15, 2.5*1.732+0.2598) -- (4.5-0.35, 2.5*1.732 + 0.6062);
    \draw (4-0.15, 3*1.732+0.2598) -- (4-0.35, 3*1.732 + 0.6062);
    \end{tikzpicture}
\caption{Here, $q = 2 \nu_1 + \gamma _1$ and for $x$ a label in $h$ we write $x'$ to denote $x+3\nu_1$}\label{par_to_tri}.
\end{figure}
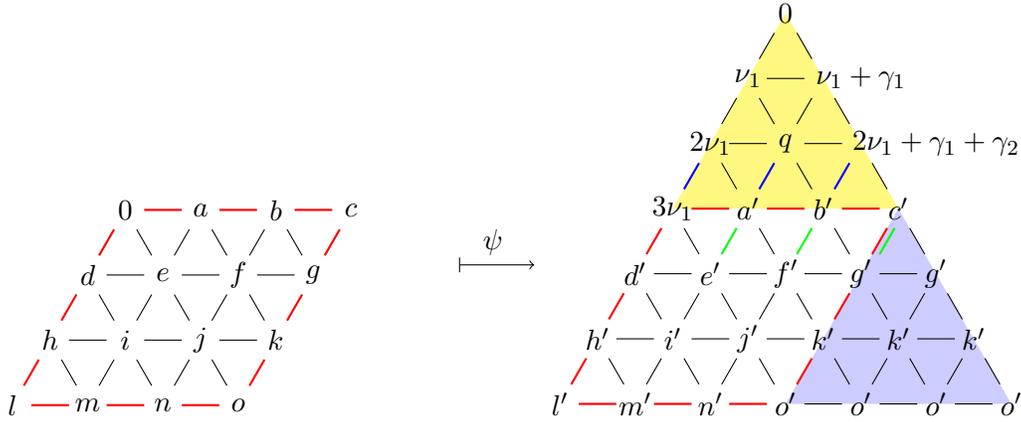

By the above description, to verify the well definedness of the map $\psi$, it suffices to check the rhombus inequalities in $\psi(h)$ for the following $2n$ rhombi:
\begin{itemize}
    \item The $n$ SE rhombi each of which straddles the regions described in (2) and (4).
    \item The $n$ vertical rhombi each of which straddles the regions described in (2) and (3).
\end{itemize}
The first $n$ rhombi inequalities hold because of the fact that the entries of $h$ increase along the rows (this follows form the Southeast rhombi inequalities and the fact that $\mu_t \geq 0 \hspace{0.2cm} \forall t$). 
The second $n$ rhombi inequalities hold because the edge labels \footnote{The edge label of the Northeast edge 
\begin{tikzpicture}
    \draw (0,0) node {$x$};
    \draw (0.5, 0.5*1.732) node {$y$};
    \draw (0+0.15,0.2598) -- (0+0.35,0.6062);
    \end{tikzpicture} 
is defined to be $x-y$.} of the Northeast edges (green edges in figure \ref{par_to_tri}) originating from the $(n+2) ^{th}$ row is bounded above by $\nu_1$, which is equal the edge label of the Northeast edges originating from the $(n+1)^{th}$ row (blue in the figure). This follows from the Northeast rhombi inequalities and the boundary condition on $h$. For example, in figure \ref{par_to_tri} we have $e' - a' \leq f' -b' \leq g'-c' = \nu_1$. Therefore the map $\psi$ is well defined.

It is clear from the definition of the map that it is injective, affine linear and sends integral skew hives to integral hives. We now establish surjectivity. Given a hive in $\Hive(\Tilde{\lambda}, \Tilde{\mu}, \Tilde{\nu})$, consider its triangular subarrays marked in yellow and blue in Figure~\ref{par_to_tri}. These are themselves hives, and lemma~\ref{hive_singleton} implies that these hives are uniquely determined (since the corresponding hive polytopes are non-empty). In particular, the labels on the bottom row of the yellow triangle are  $\bar{\gamma}+n \cdot \nu_1$  where $\bar{\gamma}$ denotes the vector of partial sums of $\gamma$ as defined in \S\ref{sec:skewhive}. Likewise, the labels on the left edge of the blue triangle must be $|\gamma| + \bar{\nu} + n \cdot \nu_1$. These are also edges of the the white parallelogram. The othe two edges of the parallelogram have edge labels $\bar{\lambda} + n \cdot \nu_1$ and $|\lambda| + \bar{\mu} + n \cdot \nu_1$.  This proves surjectivity of $\psi$.

Finally, since the map $\psi$ does not change rhombus contents, it does not alter any flatness conditions within the white parallelogram. Thus if the left and bottom justified region $R(\Phi)$ is flat in $h$, then it remains flat in $\psi(h)$; however since $\psi(h)$ is a triangular hive in twice as many ambient dimensions, this region would now correspond to $R(\Tilde{\Phi})$ in $\psi(h)$, where  $\Tilde{\Phi} = (\Phi_1 +n, \cdots, \Phi_n +n)$.
\end{proof}

We remark that $Hive(\Tilde{\lambda}, \Tilde{\mu}, \Tilde{\nu}, \Tilde{\Phi})$ coincides with the hive Kogan face $K^{\Hive}(\Tilde{\lambda}, \Tilde{\mu}, \Tilde{\nu}, w(\Tilde{\Phi}))$ of \cite{krvfpsac}, where $w(\Tilde{\Phi}) \in \mathfrak{S}_{2n}$ is the unique $312$-avoiding permutation corresponding to the flag $\Tilde{\Phi}$ \cite[\S 14]{PS}, \cite[\S 2.4]{krvfpsac}.
Now, as a consequence of the proposition, we have:
\begin{theorem}
    The flagged skew Littlewood-Richardson coefficients coincide with certain $w$-refined (where, $w$ is $312$-avoiding) Littlewood-Richardson coefficients of \cite{krvfpsac}. More precisely, $c_{\lambda, \,\mu/\gamma} ^{\,\nu} (\Phi) = c_{\Tilde{\lambda}, \,\Tilde{\mu}} ^{\,\Tilde{\nu}} (w(\Tilde{\Phi}))$ where $w(\Tilde{\Phi})$ is the unique $312$-avoiding permutation corresponding to the flag $\Tilde{\Phi}$.
\end{theorem}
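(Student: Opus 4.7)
The plan is to prove this theorem as a direct corollary of the preceding proposition, chained with the hive-polytope description of both sides. First I would invoke Theorem~\ref{first result}(2) to write
\[
c_{\lambda,\,\mu/\gamma}^{\,\nu}(\Phi) \;=\; \bigl|\SHive_{\mathbb{Z}}(\lambda,\mu,\gamma,\nu,\Phi)\bigr|.
\]
Next, I would apply the affine linear isomorphism $\psi$ constructed in the proposition just above the theorem. Since $\psi$ preserves integral points and restricts to a bijection $\SHive(\lambda,\mu,\gamma,\nu,\Phi) \to \Hive(\Tilde{\lambda},\Tilde{\mu},\Tilde{\nu},\Tilde{\Phi})$, we obtain the intermediate equality
\[
\bigl|\SHive_{\mathbb{Z}}(\lambda,\mu,\gamma,\nu,\Phi)\bigr| \;=\; \bigl|\Hive_{\mathbb{Z}}(\Tilde{\lambda},\Tilde{\mu},\Tilde{\nu},\Tilde{\Phi})\bigr|.
\]

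Then I would invoke the remark immediately preceding the theorem, which identifies $\Hive(\Tilde{\lambda},\Tilde{\mu},\Tilde{\nu},\Tilde{\Phi})$ with the hive Kogan face $K^{\Hive}(\Tilde{\lambda},\Tilde{\mu},\Tilde{\nu},w(\Tilde{\Phi}))$ for the unique $312$-avoiding permutation $w(\Tilde{\Phi}) \in \mathfrak{S}_{2n}$ associated with the flag $\Tilde{\Phi}$ via \cite[\S 14]{PS} and \cite[\S 2.4]{krvfpsac}. Finally, by \cite[Theorem 1.2 or equivalent]{krvfpsac}, the number of integral points of this hive Kogan face is precisely the $w$-refined Littlewood-Richardson coefficient $c_{\Tilde{\lambda},\,\Tilde{\mu}}^{\,\Tilde{\nu}}(w(\Tilde{\Phi}))$. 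Concatenating the three equalities yields the claimed identity.

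The only genuine content in the argument lives in the preceding proposition (constructing $\psi$ and checking that it matches $R(\Phi)$ with $R(\Tilde{\Phi})$) and in Theorem~\ref{first result}, both of which are already in hand. The remaining work is bookkeeping of two kinds: first, verifying that the partitions $\Tilde{\lambda},\Tilde{\mu},\Tilde{\nu} \in \mathcal{P}[2n]$ defined in the proposition indeed satisfy the size compatibility $|\Tilde{\lambda}|+|\Tilde{\mu}|=|\Tilde{\nu}|$ required for $\Hive(\Tilde{\lambda},\Tilde{\mu},\Tilde{\nu})$ to be non-empty at the boundary level (a short computation using $|\lambda|+|\mu|=|\gamma|+|\nu|$); and second, checking that the flag $\Tilde{\Phi}=(\Phi_1+n,\ldots,\Phi_n+n)$ is indeed a valid flag in $\mathfrak{S}_{2n}$ (weak monotonicity is inherited from $\Phi$, and $\Tilde{\Phi}_n = \Phi_n + n = 2n$), and that it gives rise to a $312$-avoiding permutation. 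The main potential obstacle is merely ensuring that the conventions for the hive Kogan face $K^{\Hive}$ and the associated $312$-avoiding permutation in \cite{krvfpsac} align with the labelling of $R(\Tilde{\Phi})$ used here; once that is confirmed, the proof is a one-line chain of equalities.
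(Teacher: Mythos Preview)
Your proposal is correct and mirrors exactly what the paper does: the theorem is stated there as an immediate ``consequence of the proposition'' together with the preceding remark identifying $\Hive(\Tilde{\lambda},\Tilde{\mu},\Tilde{\nu},\Tilde{\Phi})$ with the Kogan face $K^{\Hive}(\Tilde{\lambda},\Tilde{\mu},\Tilde{\nu},w(\Tilde{\Phi}))$, with no separate proof block. Your chain of equalities via Theorem~\ref{first result}(2), the bijection $\psi$, and the integral-point count from \cite{krvfpsac} is precisely the intended argument; the only caveat is that $\Tilde{\Phi}$ as written has $n$ components rather than $2n$, so it is implicitly completed to a length-$2n$ flag by setting $\Tilde{\Phi}_{n+1}=\cdots=\Tilde{\Phi}_{2n}=2n$ (your check that $\Tilde{\Phi}_n=2n$ should really be $\Tilde{\Phi}_{2n}=2n$).
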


It is elementary to check that $\widetilde{k\lambda} = k\Tilde{\lambda}$ and $\widetilde{k\nu} = k\Tilde{\nu}$. This implies, $c_{k\lambda, \,k\mu/k\gamma} ^{\,k\nu} (\Phi) = c_{k\Tilde{\lambda}, \,k\Tilde{\mu}} ^{\,k\Tilde{\nu}} (w(\Tilde{\Phi}))$. But Theorem 1.4 of \cite{krvfpsac} establishes the saturation property of the $w$-refined LR coefficients when $w$ is $312$-avoiding. Together with the preceding remarks, this implies our main theorem:
\begin{theorem}\label{thm:mainthmbody}
The saturation property holds for the flagged LR coefficients. i.e., $$c_{k\lambda, \,k\mu/k\gamma} ^{\,k\nu} (\Phi) >0 \text{ for some } k \geq 1 \implies c_{\lambda, \,\mu/\gamma} ^{\,\nu} (\Phi) >0$$
\end{theorem}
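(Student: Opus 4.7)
The plan is to reduce the saturation statement for flagged skew LR coefficients to the known saturation theorem for $w$-refined LR coefficients (with $w$ a $312$-avoiding permutation), proved in \cite[Theorem 1.4]{krvfpsac}. The bridge between these two families of coefficients is the affine linear isomorphism $\psi$ constructed in the preceding proposition, which identifies $\SHive(\lambda,\mu,\gamma,\nu,\Phi)$ with the hive Kogan face $\Hive(\tilde\lambda,\tilde\mu,\tilde\nu,\tilde\Phi)$, preserves integral points, and sends the flag $\Phi$ to $\tilde\Phi=(\Phi_1+n,\ldots,\Phi_n+n)$, which corresponds to a $312$-avoiding permutation $w(\tilde\Phi)\in\mathfrak{S}_{2n}$.

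First, using the hive model of Theorem~\ref{first result}(2) and the fact that $\psi$ preserves integral points, I would record the identity
\[
c^{\,\nu}_{\lambda,\,\mu/\gamma}(\Phi)=\bigl|\SHive_{\mathbb Z}(\lambda,\mu,\gamma,\nu,\Phi)\bigr|=\bigl|\Hive_{\mathbb Z}(\tilde\lambda,\tilde\mu,\tilde\nu,\tilde\Phi)\bigr|=c^{\,\tilde\nu}_{\tilde\lambda,\,\tilde\mu}\bigl(w(\tilde\Phi)\bigr),
\]
where the last equality is the definition of the $w$-refined LR coefficient via counting integer points of the appropriate hive Kogan face \cite{krvfpsac}.

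Next, I would verify the elementary compatibility between scaling and the tilde construction. By definition, $\tilde\lambda=(\nu_1,\ldots,\nu_1,\lambda_1,\ldots,\lambda_n)$, $\tilde\mu=(\mu_1,\ldots,\mu_n,0,\ldots,0)$, and $\tilde\nu=(\nu_1+\gamma_1,\ldots,\nu_1+\gamma_n,\nu_1,\ldots,\nu_n)$; each of these is homogeneous of degree one in $(\lambda,\mu,\gamma,\nu)$, so replacing $(\lambda,\mu,\gamma,\nu)$ by $(k\lambda,k\mu,k\gamma,k\nu)$ simply scales each tilde-partition by $k$, i.e., $\widetilde{k\lambda}=k\tilde\lambda$, $\widetilde{k\mu}=k\tilde\mu$, $\widetilde{k\nu}=k\tilde\nu$. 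Crucially, the flag $\tilde\Phi=(\Phi_1+n,\ldots,\Phi_n+n)$ depends only on $\Phi$ and $n$, not on the partitions, so $w(\tilde\Phi)$ is unchanged under scaling.

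Combining these two observations yields
\[
c^{\,k\nu}_{k\lambda,\,k\mu/k\gamma}(\Phi)=c^{\,k\tilde\nu}_{k\tilde\lambda,\,k\tilde\mu}\bigl(w(\tilde\Phi)\bigr),
\]
so positivity of the LHS for some $k\geq 1$ is equivalent to positivity of $c^{\,k\tilde\nu}_{k\tilde\lambda,\,k\tilde\mu}(w(\tilde\Phi))$. Since $w(\tilde\Phi)$ is $312$-avoiding, \cite[Theorem 1.4]{krvfpsac} applies and gives positivity of $c^{\,\tilde\nu}_{\tilde\lambda,\,\tilde\mu}(w(\tilde\Phi))$, which by the identity above is exactly $c^{\,\nu}_{\lambda,\,\mu/\gamma}(\Phi)>0$. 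The entire proof is essentially bookkeeping once the isomorphism $\psi$ is in hand; the one spot requiring genuine care is the verification that the tilde construction really is homogeneous and that $w(\tilde\Phi)$ does not depend on the partitions, so that the saturation hypothesis is faithfully transported to the triangular hive side without drifting into a different permutation as $k$ varies. This is the main (and only) obstacle I anticipate, and it is purely combinatorial.
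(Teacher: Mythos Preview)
Your proposal is correct and follows exactly the paper's own argument: identify $c_{\lambda,\,\mu/\gamma}^{\,\nu}(\Phi)$ with the $w$-refined LR coefficient $c_{\tilde\lambda,\,\tilde\mu}^{\,\tilde\nu}(w(\tilde\Phi))$ via the affine isomorphism $\psi$, observe that the tilde construction is homogeneous in the partitions while $\tilde\Phi$ (and hence $w(\tilde\Phi)$) is fixed, and then invoke \cite[Theorem~1.4]{krvfpsac}. The care you take in checking that $\widetilde{k\lambda}=k\tilde\lambda$, etc., and that the permutation does not drift with $k$ matches precisely the point the paper highlights before stating the theorem.
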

Theorem~\ref{thm:mainthmbody} can also be proved by working directly with the skew hive polytope, rather than with its isomorphic hive polytope. This involves 
 mimicking all the arguments of \cite{buch, krvfpsac} for skew hives. While we have chosen a shorter approach in this paper, this alternate approach naturally suggests numerous other refinements of the LR coefficients with the saturation property. These will be considered in a future publication.

\section{Appendix: Decomposition of $\Tab(\lambda / \mu, \Phi)$ into Demazure crystals}
In this section, we make theorem \ref{crystal-thrm} effective, describing algorithmically the Demazure crystals which occur in the decomposition. The arguments below are implicit in the character level proof of \cite{RS}, and so we content ourselves with sketching their broad contours.

We start with a brief discussion of the Burge correspondence \cite{fulton:yt}. We use the standard notation $[n] =\{1,2,\cdots,n\}$. Consider a matrix $M=(m_{ij})$ of size $r \times n$ with non-negative integer entries. We associate a biword to  $M$  as follows $$ w_{M}=\begin{bmatrix}          i_t\hspace{0.2cm} \cdots \hspace{0.2cm}i_2 \hspace{0.2cm} i_1 \\
           j_t \hspace{0.2cm} \cdots \hspace{0.2cm} j_2 \hspace{0.2cm}j_1 \\
           
\end{bmatrix}$$
such that for any pair $(i, j)$ that indexes an entry $m_{ij}$ of $M$, there are $m_{ij}$ columns equal to
$\begin{bmatrix}
    i \\
    j
\end{bmatrix}$ in $w_M$, and the columns of  $w_M$ are ordered as follows:
\begin{center}
    \begin{enumerate}
        \item $i_t \geq \cdots \geq i_2 \geq i_1 \geq 1 $.
        \item $ i_{k+1} > i_k$ whenever $ j_{k+1}>j_k$.
    \end{enumerate}
\end{center}

In other words, form the biword $w_M$ by reading the entries $m_{ij}$ of $M$ from left to right within each row starting with the bottom row and proceeding upwards, recording each
$\begin{bmatrix}    i \\    j \end{bmatrix}$ with multiplicity $m_{ij}$. 
We will often denote the row and column indices of the biword by $ \textbf{i} =i_1 i_2 \cdots i_t$ and $ \textbf{j} =j_1 j_2 \cdots j_t$. 
Additionally, given a flag $\Phi$, if  $ i_k \leq \Phi_{j_k}$ for all $k$ (in particular, the matrix $M$ is block upper-triangular) then $ \textbf{i}$ is said to be $(\textbf{j},\Phi)$-compatible (see \cite{RS}).
\begin{theorem}
\cite[Appendix A, Proposition 2]{fulton:yt}
 The Burge correspondence gives a bijection between the set of all $r \times n$ matrices with non-negative integer entries $Mat_{r \times n}(\mathbb{Z}_{+})$ and the set of pairs $(P, Q)$ of semistandard tableaux with the same shape where entries of $P$ are in 
   $[n]$ and entries of $Q$ are in 
    $[r]$. We use the notation $(w_{A} \rightarrow \emptyset)=(P, Q)$ if A corresponds to $(P, Q)$.
\end{theorem}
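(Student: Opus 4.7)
The plan is to establish the bijection through an explicit single-column insertion procedure together with its inverse. Given a biword $w_M$, I would process its columns one at a time and, for each pair $\begin{bmatrix} i_k \\ j_k \end{bmatrix}$, perform a Schensted-type column insertion of $j_k$ into a growing insertion tableau $P$, placing $i_k$ in the cell newly created by this insertion to build up a recording tableau $Q$ of the same shape. Burge's variant differs from classical RSK precisely in the choice of insertion (column rather than row) and the corresponding ordering convention on the biword, which is exactly conditions (1) and (2) imposed above.

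The first task is to check that $P$ remains a semistandard tableau with entries in $[n]$ at each step; this is standard, as column insertion preserves semistandardness and the inserted entries $j_k$ lie in $[n]$. The more delicate task is to show that $Q$ is semistandard with entries in $[r]$. Comparing two successive insertions, condition (1) gives $i_{k+1} \geq i_k$; when $i_{k+1} > i_k$ the new cells can sit in any relative position consistent with growth, while when $i_{k+1} = i_k$ condition (2) forces $j_{k+1} \leq j_k$, and a standard lemma on column insertion then places the cell created at step $k+1$ strictly to the right of the cell created at step $k$. This is exactly what is needed for the rows of $Q$ to be weakly increasing and the columns to be strictly increasing.

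For the inverse direction, I would reverse-bump cells out of $(P,Q)$ one at a time: locate the cell of $Q$ bearing the largest entry (breaking ties by taking the rightmost such cell), reverse column-insert the corresponding entry of $P$ to recover a pair, and iterate. A short verification shows that the extracted sequence of pairs satisfies both ordering conditions, and therefore reassembles into a unique matrix $M \in Mat_{r \times n}(\mathbb{Z}_+)$. The main obstacle is the semistandardness analysis for $Q$: one has to pin down precisely how the biword ordering conditions force the bumping paths of successive column insertions to land in the correct relative positions, and conversely that the tie-breaking rule for reverse insertion produces a sequence respecting (1) and (2). Once this local invariant is established, both directions of the bijection reduce to the invertibility of a single column-insertion step.
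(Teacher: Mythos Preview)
The paper does not supply its own proof of this statement; it is quoted verbatim as a known result from Fulton's textbook \cite[Appendix~A, Proposition~2]{fulton:yt} and used as a black box in the appendix. Your outline---build $P$ by successive column insertions of the $j_k$, record the $i_k$ in the newly created cells to form $Q$, use the biword ordering (1)--(2) to check semistandardness of $Q$, and invert by reverse-bumping the largest rightmost entry of $Q$---is precisely the standard argument found in Fulton, so there is no discrepancy to report.
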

\begin{theorem}
\cite[Appendix A, Symmetry Theorem $($b$)$]{fulton:yt}
\label{burge}
 If  $ M \in Mat_{r \times n}(\mathbb{Z}_{+})$ corresponds to $(P, Q)$ then its transpose ${M^t}$  corresponds to $(Q, P)$.   
\end{theorem}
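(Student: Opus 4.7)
The plan is to prove the symmetry via a Fomin-style growth-diagram encoding of the Burge correspondence, adapted so that the two rows of the biword are recorded along perpendicular axes of a rectangular lattice. The geometric symmetry of the setup will then make the transpose symmetry manifest. I would set up an $r \times n$ grid of cells, inscribing the entry $m_{ij}$ in the cell at row $i$ and column $j$. To each of the $(r+1)(n+1)$ lattice vertices I would attach a partition, determined by a local growth rule: the partition at the northeast corner of a cell is computed from the partitions at the other three corners together with the enclosed matrix entry, via a rule tailored to the Burge variant (so that columns of the biword are consumed in the order dictated by reading $M$ bottom-to-top, left-to-right, with column insertion into $P$). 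The boundary vertices along the south and west edges are initialized to the empty partition.

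Next I would check that reading the partitions along the east edge (south to north) recovers the sequence of shapes obtained by successively column-inserting the $j$-entries of $w_M$ into $P$, while reading along the north edge (west to east) recovers the corresponding sequence for $Q$. This is a standard induction on the number of cells already processed, once the local rule has been set up correctly; the key point is that processing one additional cell corresponds exactly to one more step of Burge insertion.

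With this in hand, the heart of the proof is the local symmetry lemma: the growth rule is invariant under swapping the roles of the north and west neighbors of a cell, i.e.\ under reflection across the cell's NE--SW diagonal. This is a direct finite case analysis on the local configuration (the partitions at the three given corners, together with the inscribed nonnegative integer). Granting it, the theorem follows by reflecting the entire $r\times n$ grid across its main diagonal: the inscribed entries become those of $M^t$, the east and north boundaries get interchanged, and the local rules are preserved by assumption. Consequently the $P$-tableau produced from $M^t$ equals the $Q$-tableau produced from $M$ and vice versa, which is precisely the claimed identity $M^t \leftrightarrow (Q,P)$.

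The main obstacle is pinpointing and verifying the correct local rule for the Burge variant: it differs from the classical RSK local rule in how cells with a positive inscribed entry are resolved, and one must also confirm that the chosen convention is compatible with the specific ordering of $w_M$ described in the excerpt (weakly decreasing $i$'s, with strict decrease forced whenever the $j$'s strictly increase). Once the rule is fixed and shown to be diagonal-symmetric, the rest of the argument is formal and does not require a separate analysis of $P$ versus $Q$.
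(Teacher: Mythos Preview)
The paper does not supply its own proof of this statement; it is simply quoted from Fulton's book as a known result, so there is no in-paper argument to compare your proposal against.

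Your growth-diagram plan is a legitimate and well-documented route to the symmetry of RSK-type correspondences, going back to Fomin, and it has been carried out for the Burge (column-insertion) variant as well. The outline is sound: once the local rule is pinned down and shown to be invariant under swapping the two axial directions, the global symmetry follows formally by reflecting the grid. Fulton's own argument in Appendix~A is more hands-on, working directly with the insertion process (via a Viennot-style geometric construction) rather than repackaging it as a local growth rule. Your approach has the virtue of making the symmetry structurally transparent---it becomes a statement about a single local rule rather than about an entire sequence of insertions---at the price of first having to set up and verify the growth machinery for this particular variant. The obstacle you flag, namely matching the local rule to the specific biword ordering used here (weakly decreasing $i$'s with strict decrease forced when the $j$'s increase), is the right one to isolate: the Burge local rule differs from the classical RSK rule precisely in how multiplicities in a cell are resolved, but it is indeed diagonally symmetric, so the argument goes through.
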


\emph{The reverse filling} of the skew shape $\lambda/\mu$, denoted $RF(\lambda/\mu)$, is defined to be the filling of the boxes of the shape $\lambda/\mu$ by $1, 2, 3, \cdots, |\lambda/\mu|$, sequentially from right to left within each row, starting with the top row and proceeding downwards.

We say a standard $(\text{skew})$ tableau $Q$ with $|\text{shape$(Q)$}|=|\lambda/\mu|$ is $\lambda/\mu$-\emph{compatible} 
if it satisfies the following:
\begin{enumerate}
    \item If $i+1, i $ are adjacent in a row of $RF(\lambda/\mu)$ then $i+1$ appears weakly north and strictly east of $i$ in $Q$.
    \item If $i$ occurs directly above $j$ in a column of $RF(\lambda/\mu)$ then $j$ appears weakly west and strictly south of $i$ in $Q$.
\end{enumerate}

\begin{remark}
    The set of all $\lambda/\mu$-compatible standard tableaux of shape $\nu$  is in one-to-one correspondence with the set of  Littlewood-Richardson tableaux of shape $\lambda/\mu$ and weight $\nu$ \cite[Chapter 5, Proposition 4]{fulton:yt}. 
\end{remark}

Fix a $\lambda/\mu$-compatible standard tableau $Q$. Given a composition $\alpha = (\alpha_1, \alpha_2, \cdots)$, let \textbf{b}($\alpha$) be the word $ \cdots b^{(2)}b^{(1)} $ in which $b^{(j)}$ consists of a string of $\alpha_j$ copies of $j$. Also, consider the semi-standard tableau $R$ whose standardization\footnote{The standardization of a tableau $T$ (denoted by std$(T)$) is the tableau obtained by changing the $1$'s in $T$ from left to right to $1,2,\cdots, \alpha_1$, then the $2$'s to $\alpha_1+1,\alpha_1+2,\cdots,\alpha_1+\alpha_2$ etc, where $\alpha=$weight$(T)$.} is $Q$ and weight is the composition $\rho=\lambda-\mu$. If  $\textbf{i}=i_1i_2\cdots i_t $ then by $\text{rev}(\textbf{i})$ we mean the word $i_t\cdots i_2i_1$.\\
Define $\mathcal{A}(Q,\lambda/\mu,\Phi)=\{ T \in \Tab(\lambda/\mu, \Phi):(
\begin{bmatrix}
           \textbf{b}(\rho) \\
           \text{rev}(b_T) \\
           
\end{bmatrix} \rightarrow \emptyset)
$  
$=(-, R)\}$.
Then $ \Tab(\lambda/\mu,\Phi)= \bigsqcup \mathcal{A}(Q,\lambda/\mu,\Phi) $ where the union is over all $\lambda/\mu$-compatible standard tableaux $Q$ \cite{RS}. We will show that $\mathcal{A}(Q,\lambda/\mu,\Phi)$ is isomorphic to some Demazure crystal as crystals, i.e., there is a weight-preserving bijection between these sets which intertwines the crystal raising and lowering operators (where defined).

For a composition $\alpha$, key($\alpha$) is the semi-standard tableau of shape $\alpha^{\dagger}$ whose first $\alpha_k$ columns contain the letter $k$ for all $k$. One can see that key($\alpha$) is the unique tableau of shape $\alpha ^{\dagger}$ and weight $\alpha$. We define $\mathcal{W}(\alpha, \Phi)$ as the set of all words $\textbf{u} = \cdots u^2 \cdot u^1 $, where each $u^i$ is a maximal row word  of length  $\alpha_i$ together with the properties that each letter in $u^i$ can be atmost $\Phi_i$ and $(
\begin{bmatrix}
    \textbf{b}(\alpha)\\
    \textbf{u}
    \end{bmatrix} \rightarrow \emptyset) =(-, \text{key}(\alpha)) $.

\medskip
Let $\Phi_0=(1,2,3,\cdots,n)$ denote the {\em standard flag}. We have:
\begin{theorem}
    \cite[Proposition 5.6]{LS-keys} Let $\alpha = \omega \alpha ^{\dagger}$. Then the set $\mathcal{W}(\alpha, \Phi_0)$ has a one-to-one correspondence with the set $\mathcal{B}_{\omega}(\alpha^{\dagger})$ via $\textbf{u} \mapsto P(\textbf{u})$ where $ P(\textbf{u})$ is the unique tableau that is Knuth equivalent to $\textbf{u} $.
\end{theorem}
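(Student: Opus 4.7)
The plan is to establish the bijection by combining the injectivity built into the Burge correspondence with a character-matching argument, and to identify the image with the Demazure crystal via the Lascoux--Sch\"utzenberger right-key characterization. First, I would verify well-definedness: given $\mathbf{u} \in \mathcal{W}(\alpha, \Phi_0)$, Theorem \ref{burge} guarantees that $P(\mathbf{u})$ has the same shape as $Q = \text{key}(\alpha)$, namely $\alpha^\dagger$. Since each factor $u^i$ in the L--S factorization of $\mathbf{u}$ consists of letters at most $\Phi_{0,i} = i \leq n$, we have $P(\mathbf{u}) \in \Tab(\alpha^\dagger, n)$. Injectivity of $\mathbf{u} \mapsto P(\mathbf{u})$ is then immediate from the bijectivity of the Burge correspondence, because the $Q$-tableau is pinned at $\text{key}(\alpha)$.

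Next, I would show the image lies in $\mathcal{B}_\omega(\alpha^\dagger)$. For this I invoke the classical right-key criterion: a tableau $T \in \Tab(\alpha^\dagger, n)$ belongs to $\mathcal{B}_\omega(\alpha^\dagger)$ if and only if the right key $K_+(T)$ is entrywise bounded by $\text{key}(\alpha)$. The symmetry property in Theorem \ref{burge} converts the constraint $Q = \text{key}(\alpha)$ into a constraint on the column structure of $P(\mathbf{u})$; tracking how the successive row-insertion of the factors $\cdots u^2 u^1$ builds up $P(\mathbf{u})$ one sees that the columns of $P(\mathbf{u})$ are precisely dominated by those of $\text{key}(\alpha)$, so the right-key inequality holds. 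To conclude bijectivity, I would match characters: by the standard combinatorial formula for key polynomials, the weight-generating function of $\mathcal{W}(\alpha, \Phi_0)$ is $\key_\alpha$, while by the refined Demazure character formula recorded earlier, $\mathrm{ch}(\mathcal{B}_\omega(\alpha^\dagger)) = \key_{\omega \alpha^\dagger} = \key_\alpha$. A weight-preserving injection between finite sets with the same weighted cardinality is necessarily a bijection.

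The main obstacle is the middle step: turning the Burge constraint $Q = \text{key}(\alpha)$ into the right-key bound $K_+(P(\mathbf{u})) \leq \text{key}(\alpha)$. This is the technical heart of Lascoux--Sch\"utzenberger's theory and needs a careful column-by-column analysis of the L--S factorization versus recording data. A possible workaround is to bypass the right-key step entirely by exhibiting an explicit inverse: given $T \in \mathcal{B}_\omega(\alpha^\dagger)$, apply inverse Burge to the pair $(T, \text{key}(\alpha))$ to obtain a biword, and take $\mathbf{u}$ to be its bottom row; one then checks that the maximal-row factorization of $\mathbf{u}$ matches the column structure of $\text{key}(\alpha)$ and that the flag bound $u^i \leq \Phi_{0,i}$ is automatic from the order in which letters of $\mathbf{b}(\alpha)$ force insertions. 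Either route reduces the proposition to classical RSK/Burge bookkeeping, with the crystal structure entering only through the Demazure character identity.
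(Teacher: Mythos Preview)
The paper does not prove this statement; it is quoted directly from Lascoux--Sch\"utzenberger \cite[Proposition 5.6]{LS-keys} with no argument given. There is therefore no ``paper's own proof'' to compare your proposal against.

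As for the proposal itself, the overall architecture---injectivity from the Burge correspondence with fixed $Q$-tableau, image control via the right-key criterion, surjectivity by a cardinality or character count---is the standard route and is essentially what Lascoux and Sch\"utzenberger do. One caution: your character-matching step invokes ``the standard combinatorial formula for key polynomials'' to compute the weight-generating function of $\mathcal{W}(\alpha,\Phi_0)$, but that formula (key polynomial as a sum over words/tableaux with bounded right key) is precisely the content of the Lascoux--Sch\"utzenberger theorem you are citing. Using it here is circular. Your alternative workaround---building the inverse map directly by applying inverse Burge to $(T,\text{key}(\alpha))$ and checking the flag and factorization conditions---is the honest route and is in fact how the original argument proceeds; the right-key inequality $K_+(T)\leq\text{key}(\alpha)$ is exactly what guarantees that the recovered bottom row respects the $\Phi_0$ bound.
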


Let $\textbf{a}=a_1a_2\cdots a_t$ be a word. Then by an \emph{ascent} of the word \textbf{a} we mean a positive integer $1\leq k \leq t-1$ such that $a_k < a_{k+1}$. We recursively define the \emph{essential subword} $\text{ess}_L(\textbf{a})$ of \textbf{a}  with respect to a positive integer $L$ or $\infty$ to be the following indexed subword of \textbf{a}:
\begin{enumerate}
    \item $\text{ess}_L(\textbf{a})$ is the empty word if $t=0$.
    \item $\text{ess}_L(\textbf{a})=\text{ess}_{a_t}(a_1a_2 \cdots a_{t-1})\,a_t$ if $a_t < L $.
    \item  $ \text{ess}_L(\textbf{a}) =\text{ess}_L(a_1a_2 \cdots a_{t-1})$ if $a_t\geq L $.
\end{enumerate}
Define the essential subword of \textbf{a} as $\text{ess}(\textbf{a})=\text{ess}_{\infty}(\textbf{a})$.
\begin{lemma}
\label{lemma 8}
    Let $\Phi $ be a flag and $\textbf{i}=i_1 i_2\cdots i_t$ be a word. If $\textbf{a}=a_1 a_2\cdots a_t$, $\textbf{b}=b_1 b_2 \cdots b_t$  are words in $[n]$ having the same essential subword and ascents, then \textbf{i} is $(\textbf{a}, \Phi) $-compatible if and only if \textbf{i} is $(\textbf{b}, \Phi)$-compatible.
\end{lemma}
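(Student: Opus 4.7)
The plan is to reformulate $(\textbf{a},\Phi)$-compatibility as a condition that depends only on the essential subword and ascent set of the base word; the Lemma will then follow at once since both invariants agree for $\textbf{a}$ and $\textbf{b}$ by hypothesis. Recall that $\textbf{i}$ is $(\textbf{a},\Phi)$-compatible iff it satisfies (1) $\textbf{i}$ weakly increasing, (2) $i_{k+1}>i_k$ at every ascent $k$ of $\textbf{a}$, and (3) $i_k\le \Phi_{a_k}$ for every $k$.

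The key step is that, given (1), condition (3) is equivalent to its restriction at essential positions. Let $m_1<\cdots<m_s=t$ be the essential positions of $\textbf{a}$, with values $v_1<\cdots<v_s$. A short induction on the recursive definition of $\mathrm{ess}_L$ yields the structural observation: at any non-essential position $k$, one has $a_k\ge v_{l+1}$, where $m_{l+1}$ is the smallest essential position exceeding $k$. The reason is that at the step of the recursion processing position $k$, the threshold $L$ equals $v_{l+1}$ (the value at the most recently included essential position lying to the right of $k$), and since $k$ is skipped one has $a_k\ge L=v_{l+1}$. Combined with the weak monotonicity of $\Phi$ and of $\textbf{i}$, the essential bound $i_{m_{l+1}}\le \Phi_{v_{l+1}}$ then propagates to $i_k\le i_{m_{l+1}}\le \Phi_{v_{l+1}}\le \Phi_{a_k}$, so the flag inequality at $k$ is automatic.

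Hence $(\textbf{a},\Phi)$-compatibility is equivalent to the conjunction of (1), (2), and the essential flag condition $i_{m_l}\le \Phi_{v_l}$ for each $l$. Condition (1) does not involve the base word; condition (2) depends only on the ascent set, which is common to $\textbf{a}$ and $\textbf{b}$; and since the essential subwords of $\textbf{a}$ and $\textbf{b}$ coincide as indexed subwords, the essential positions $m_1,\ldots,m_s$ and values $v_1,\ldots,v_s$ are literally the same, so the essential flag condition is the same system of inequalities for both words. Putting these together, $\textbf{i}$ is $(\textbf{a},\Phi)$-compatible if and only if it is $(\textbf{b},\Phi)$-compatible. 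The only technical content lies in the structural observation about non-essential positions, obtained directly from unrolling the recursive definition of $\mathrm{ess}_L$; I expect this to pose no serious obstacle.
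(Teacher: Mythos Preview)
Your argument is correct and is essentially the argument of Reiner--Shimozono that the paper cites without reproducing; the paper's own ``proof'' is just the pointer to \cite{RS}. Your key reduction---that under the weak-monotonicity condition on $\textbf{i}$ the flag bounds $i_k\le\Phi_{a_k}$ are implied by their restriction to the essential positions, because every non-essential $k$ satisfies $a_k\ge v_{l+1}$ for the next essential value $v_{l+1}$---is exactly the mechanism behind Lemma~8 of \cite{RS}, so there is nothing to add.
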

\begin{proof}
 The   proof is similar to that of Lemma 8 of \cite{RS}.
\end{proof}
For a semi-standard tableau $T$, let $T|_{<L}$ denote the subtableau of $T$ consisting of the entries of $T$ which are less than $L$, and let $ K_{\_}(T)$ denote the left key tableau of $T$. For more details on computing left and right keys, see \cite{RS}, \cite{willis} and \cite{plactic}.
\begin{lemma}
\label{lemma 9}
  Let $L\geq1$. Suppose that 
  \begin{enumerate}
      \item  $P, P'$ and $Q$ are semi-standard tableaux of the same shape such that $wt(Q) = (1,1, \cdots, 1)$ and 
      $ K_{\_}(P)|_{<L} = K_{\_}(P')|_{<L} $. 
      \item  $\textbf{a}=a_1a_2 \cdots a_t,$ $ \textbf{a}'=a'_1a'_2 \cdots a'_t 
      \text{ are two words such that } 
      (
\begin{bmatrix}
    \textbf{b}(\textbf{1}_t)\\
    \textup{rev}(\textbf{a})
    \end{bmatrix} \rightarrow \emptyset)
      =(P, Q)$ and 
      $ (
\begin{bmatrix}
    \textbf{b}(\textbf{1}_t)\\
    \textup{rev}(\textbf{a}')
    \end{bmatrix} \rightarrow \emptyset)
      =(P', Q)$, where $\textbf{1}_t$ is the composition $(1,1,\cdots,1)$ of length $t$.
  \end{enumerate}  
  Then $\textup{ess}_L(\textbf{a}) = \textup{ess}_L(\textbf{a}')$ and $ \textbf{a},\textbf{a}'$ have the same ascents.
\end{lemma}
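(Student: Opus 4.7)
The plan is to establish the two conclusions separately: the ascents assertion by direct appeal to the structure of the recording tableau, and the essential-subword assertion by induction on $t = |\mathbf{a}|$ that mirrors the recursive definition of $\textup{ess}_L$.

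For the ascents claim, I would use a standard feature of the Burge correspondence: when the top row of the biword is the standard word $t(t-1)\cdots 1$ (each label appearing exactly once), the recording tableau $Q$ encodes the ascent/descent pattern of the bottom word. Concretely, for each $k$, the relative position in $Q$ of the cells carrying the labels $k$ and $k{+}1$ determines, and is determined by, the order relation between $a_k$ and $a_{k+1}$. Since $\mathbf{a}$ and $\mathbf{a}'$ produce the same $Q$, the reversed words $\textup{rev}(\mathbf{a})$ and $\textup{rev}(\mathbf{a}')$ share the same ascent/descent pattern; reversal swaps ascents with descents, so $\mathbf{a}$ and $\mathbf{a}'$ themselves have identical ascent sets.

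The essential-subword claim is more delicate, and I would induct on $t$, with $t=0$ vacuous. For the inductive step, the recursive definition of $\textup{ess}_L$ invites a case split on whether $a_t \geq L$ or $a_t < L$. In the former case, $\textup{ess}_L(\mathbf{a}) = \textup{ess}_L(a_1\cdots a_{t-1})$ and similarly for $\mathbf{a}'$; the induction hypothesis applies once one verifies that $a_t' \geq L$ as well, and that the insertion tableaux $P^-, (P')^-$ arising from truncating the last biword-column satisfy $K_{-}(P^-)|_{<L} = K_{-}((P')^-)|_{<L}$. In the latter case, $\textup{ess}_L(\mathbf{a}) = \textup{ess}_{a_t}(a_1\cdots a_{t-1}) \cdot a_t$, and one must show $a_t = a_t'$, then invoke the induction hypothesis with $L$ replaced by $a_t$ on the truncated words, after verifying the analogous equality $K_{-}(P^-)|_{<a_t} = K_{-}((P')^-)|_{<a_t}$.

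The principal obstacle is making precise the identification of the ``last'' letter $a_t$ as a quantity recoverable from $K_{-}(P)|_{<L}$ (when $a_t < L$), together with the stability statement that the reverse Burge-insertion operation corresponding to removing the cell of $Q$ labelled $t$ leaves the restriction $K_{-}|_{<L}$ unchanged when the value being reverse-inserted is $\geq L$. Both facts rest on Willis's algorithm \cite{willis} for computing left keys, which describes each column of $K_{-}(P)$ as the minimum of certain ``frank'' columns extracted from $P$, together with a column-by-column analysis of how a single reverse insertion modifies these frank columns. Once this identification is in hand, the inductive structure of the proof flows straight from the recursion defining $\textup{ess}_L$, and the overall argument follows the blueprint of \cite[Lemma 9]{RS}, which is the direct prototype of the present lemma.
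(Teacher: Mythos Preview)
Your proposal is correct and takes essentially the same approach as the paper, which simply states that ``the proof is similar to that of Lemma~9 of \cite{RS}.'' You have in fact gone further than the paper by sketching the inductive structure and identifying the key technical inputs (the descent-set-from-$Q$ fact and the stability of $K_{-}(\cdot)|_{<L}$ under reverse insertion), all of which align with the Reiner--Shimozono argument the paper defers to.
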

\begin{proof}
 The proof is similar to that of Lemma 9 of \cite{RS}.
\end{proof}

Now we have the following proposition:
\begin{proposition}
There is a bijection $\Omega$ between the sets $\mathcal{A}(Q,\lambda/\mu,\Phi)$ and $ \mathcal{W}(\beta(R), \Phi)$  such that if $ T \mapsto \Omega (T)$ then $\textup{rev}(b_T)$ and $\Omega{(T)}$ are Knuth equivalent. Here $\beta(R)$ denote the weight of the left key tableau $K_{\_}(R)$ of $R$.   
\end{proposition}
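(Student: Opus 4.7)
My plan is to build $\Omega$ via the Burge correspondence and its transpose-symmetry. Given $T \in \mathcal{A}(Q,\lambda/\mu,\Phi)$, apply Burge to the biword $w_T = \begin{bmatrix}\mathbf{b}(\rho)\\\mathrm{rev}(b_T)\end{bmatrix}$ to obtain $(P_T,R)$, where $R$ is fixed by the definition of $\mathcal{A}$. Since $\mathrm{key}(\beta(R))$ has the same shape as $R$ (equal to the shape of $P_T$), there is a unique biword $\begin{bmatrix}\mathbf{b}(\beta(R))\\\mathbf{u}\end{bmatrix}$ that Burge-corresponds to $(P_T, \mathrm{key}(\beta(R)))$; declare $\Omega(T):=\mathbf{u}$. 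Knuth equivalence of $\mathrm{rev}(b_T)$ and $\mathbf{u}$ is then automatic, since both have insertion tableau $P_T$. Bijectivity will follow from the obvious symmetric inverse, once we verify that $\Omega$ respects the flag.

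The main obstacle is showing that $\Omega$ indeed lands in $\mathcal{W}(\beta(R),\Phi)$---that is, that the flag condition on $T$ transfers to $\mathbf{u}$. Both flag conditions amount to $\Phi$-compatibility of their respective biwords. To compare them, apply Theorem \ref{burge} to the transposes: $w_T^t$ and $w_\mathbf{u}^t$ share the common $Q$-tableau $P_T$ and have $P$-tableaux $R$ and $\mathrm{key}(\beta(R))$ respectively. The crucial observation is that these two $P$-tableaux have equal left keys, because
$K_{\_}(R) = \mathrm{key}(\beta(R)) = K_{\_}(\mathrm{key}(\beta(R)))$---the first equality by the very definition of $\beta(R)$, and the second because the left key of a key is itself.

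To invoke Lemma \ref{lemma 9}, standardize the shared $Q$-tableau so that both transposed biwords assume the form $\begin{bmatrix}\mathbf{b}(\mathbf{1}_t)\\\cdot\end{bmatrix}$. Since standardization commutes with the left-key operation, the left-key hypothesis of Lemma \ref{lemma 9} continues to hold with $L = \infty$, so the bottom words of the standardized transposed biwords have equal essential subwords and equal ascent sets. Lemma \ref{lemma 8} then shows that the $\Phi$-compatibility of one biword is equivalent to that of the other. Translating back through the transpose and un-standardization yields $T \in \Tab(\lambda/\mu,\Phi)$ if and only if $\mathbf{u} \in \mathcal{W}(\beta(R),\Phi)$, completing the verification that $\Omega$ is the desired bijection.
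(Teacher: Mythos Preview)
Your construction of $\Omega$ is exactly the paper's: once one unwinds the paper's passage through the transposed biword $\begin{bmatrix}\text{rev}(\mathbf{i})\\\text{rev}(\mathbf{a}')\end{bmatrix}\leftrightarrow (K_{\_}(R),\text{rect}(T))$ and transposes back, one obtains precisely your biword $\begin{bmatrix}\mathbf{b}(\beta(R))\\\mathbf{u}\end{bmatrix}\leftrightarrow (P_T,\mathrm{key}(\beta(R)))$, so $\Omega(T)=\mathbf{u}=\text{rev}(\mathbf{v})$ in the paper's notation. The flag-transfer argument is also the same in substance: both pass to the transpose, compare the two $P$-tableaux $R$ and $K_{\_}(R)$ (which share the same left key), and invoke Lemmas~\ref{lemma 8} and~\ref{lemma 9}. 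Where the paper finishes by citing Corollary~12 of \cite{RS}, you carry out that translation by hand; this is fine.

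One small remark: your sentence ``since standardization commutes with the left-key operation'' is not needed. Standardizing the shared $Q$-tableau (i.e.\ the top row of the transposed biwords) leaves the bottom rows, and hence the $P$-tableaux $R$ and $\mathrm{key}(\beta(R))$, unchanged; so the left-key hypothesis of Lemma~\ref{lemma 9} holds for the original $P$-tableaux without any further commutation argument.
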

\begin{proof}
    Let $T$  $ \in \Tab(\lambda/\mu, \Phi)$ and $M(T)$ be the matrix corresponding to
    $
\begin{bmatrix}
           \textbf{b}(\rho) \\
           \text{rev}(b_T) \\
           
\end{bmatrix}$.
Suppose that $
\begin{bmatrix}
           \text{rev}(\textbf{i}) \\
           \text{rev}(\textbf{a}) \\
           
\end{bmatrix}$ is the biword for $M(T)^t$. Thus \textbf{i} is $(\textbf{a}, \Phi)$-compatible because $T \in \Tab(\lambda/\mu, \Phi)$.\\ 

Let $\text{rect}(T)$ denote  the rectification of the skew tableau $T$.
Then $(
\begin{bmatrix}
           \textbf{b}(\rho) \\
           \text{rev}(b_T) \\
           
\end{bmatrix} \rightarrow \emptyset)
=(\text{rect}(T), R) \implies 
$
$(
\begin{bmatrix}
           \text{rev}(\textbf{i}) \\
           \text{rev}(\textbf{a}) \\
           
\end{bmatrix} \rightarrow \emptyset)=(R,\text{rect}(T))
$ (by theorem \ref{burge}).
Consider the unique word $\textbf{a}'$ such that 
$(
\begin{bmatrix}
           \text{rev}(\textbf{i}) \\
           \text{rev}(\textbf{a}') \\
           
\end{bmatrix} \rightarrow \emptyset)=(K_{\_}(R),\text{rect}(T))$. 
Then by Lemma~\ref{lemma 8} and Lemma~\ref{lemma 9},  \textbf{i} is $(\textbf{a}',\Phi) $-compatible. Let $ \begin{bmatrix}
           \text{rev}(\textbf{j}) \\
           \text{rev}(\textbf{v}) \\
           
\end{bmatrix} $
be the biword associated to the matrix A such that $ A^t$ corresponds to
$
\begin{bmatrix}
           \text{rev}(\textbf{i}) \\
           \text{rev}(\textbf{a}') \\
           
\end{bmatrix}
$. Hence $(
\begin{bmatrix}
           \text{rev}(\textbf{j}) \\
           \text{rev}(\textbf{v}) \\
           
\end{bmatrix} \rightarrow \emptyset)=(\text{rect}(T), K_{\_}(R))
$ (by theorem \ref{burge}).
So by corollary 12 of \cite{RS}, we have $\text{rev}(\textbf{v}) \in  \mathcal{W}(\beta(R), \Phi).$
We define $\Omega(T) = \text{rev}(\textbf{v})$. Then 
$\Omega$ is a bijection and $\text{rev}(b_T)$ and $\Omega(T) $ are Knuth equivalent.
\end{proof}
\begin{theorem}
    \cite[Theorem 21]{RS}
    For a flag $\Phi$ and a composition $\beta$, either $\mathcal{W}(\beta, \Phi)$ is empty or there is a bijection $\zeta $ between the sets $\mathcal{W}(\beta, \Phi)$ and $ \mathcal{W}(\hat{\beta}, \Phi_0)$ for some composition $\hat{\beta}$ with $\beta^{\dagger} = \hat{\beta}^{\dagger}$ such that if $\textbf{u} \mapsto \zeta (\textbf{u})$ then $\textbf{u}$ and $\zeta (\textbf{u}) $ are Knuth equivalent.
\end{theorem}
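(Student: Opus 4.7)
My plan is to translate the problem into the language of crystals via the LS bijection $P: \mathcal{W}(\alpha,\Phi_0)\to \mathcal{B}_\omega(\alpha^\dagger)$ (with $\omega\alpha^\dagger=\alpha$) cited just before the theorem. Two words in $\bigcup_\alpha \mathcal{W}(\alpha,\cdot)$ are Knuth equivalent iff they have the same insertion tableau $P(\cdot)$, so constructing the required Knuth-preserving bijection $\zeta$ reduces to finding a composition $\hat\beta$ with $\hat\beta^\dagger=\beta^\dagger$ such that
\[ S \;:=\; \{\,P(\textbf{u}) : \textbf{u}\in \mathcal{W}(\beta,\Phi)\,\} \;=\; \mathcal{B}_{\hat\omega}(\beta^\dagger) \]
inside $B(\beta^\dagger)$, where $\hat\omega\beta^\dagger=\hat\beta$. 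Given this, one sets $\zeta(\textbf{u})$ to be the unique element of $\mathcal{W}(\hat\beta,\Phi_0)$ whose $P$-tableau equals $P(\textbf{u})$; Knuth equivalence of $\textbf{u}$ and $\zeta(\textbf{u})$ is then tautological.

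First I would verify that $P$ is injective on $\mathcal{W}(\beta,\Phi)$: every word in $\mathcal{W}(\beta,\Phi)$ has the same recording tableau $\mathrm{key}(\beta)$, and Burge insertion recovers the biword uniquely from the pair $(P,Q)$. The real work is to characterize $S$. Given $T\in B(\beta^\dagger)$, membership $T\in S$ is equivalent to the existence of a word $\textbf{u}$ of the prescribed row-word structure (with row lengths $\beta_i$), Knuth equivalent to $T$, each letter of row $u^i$ bounded by $\Phi_i$. Lemmas~\ref{lemma 8} and~\ref{lemma 9} show that this $(\textbf{u},\Phi)$-compatibility depends only on the essential subword and the ascent sequence of the biword's bottom row, both of which are determined by the left key $K_{\_}(T)$. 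Hence $S$ is cut out by an entry-wise upper bound on $K_{\_}(T)$ dictated by $\Phi$ and $\beta$. By the LS characterization of Demazure subcrystals via left keys, any such subset of $B(\beta^\dagger)$ is a Demazure crystal $\mathcal{B}_{\hat\omega}(\beta^\dagger)$, which reads off $\hat\omega$; then $\hat\beta:=\hat\omega\beta^\dagger$ satisfies $\hat\beta^\dagger=\beta^\dagger$ by construction.

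The main obstacle is this second step: one must pin down the precise left-key bound induced by $\Phi$ (not merely show $S$ is a union of Demazure crystals), and then verify that this bound is of the \emph{admissible} form that realizes a single Demazure subcrystal. The essential-subword lemmas supply the key technical input, but extracting $\hat\omega$ explicitly from $\Phi$ and $\beta$ is delicate; a plausible route is a greedy normalization algorithm that adjusts the pair $(\beta,\Phi)$ by elementary transpositions on $\beta$ (preserving $\beta^\dagger$) until the flag becomes $\Phi_0$, and then showing that each elementary step lifts to a Knuth-preserving bijection of the corresponding $\mathcal{W}$-sets. The non-emptiness dichotomy in the statement corresponds to whether the resulting left-key bound is feasible: infeasibility (e.g., a forced entry exceeding its row bound) is exactly the case $\mathcal{W}(\beta,\Phi)=\emptyset$.
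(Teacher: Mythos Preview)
The paper does not give its own proof of this statement: it is quoted verbatim as \cite[Theorem~21]{RS} and used as a black box in the appendix. So there is no in-paper argument to compare your proposal against.

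That said, a brief comment on your plan. Your reduction is sound at the level of strategy: $P$ is injective on $\mathcal{W}(\beta,\Phi)$ because the $Q$-tableau is forced to be $\mathrm{key}(\beta)$, and the LS bijection $P:\mathcal{W}(\hat\beta,\Phi_0)\to\mathcal{B}_{\hat\omega}(\beta^\dagger)$ does exactly identify the target once you know the image $S$ is a single Demazure crystal. However, the step where you invoke ``the LS characterization of Demazure subcrystals via left keys'' needs care: the standard LS description of $\mathcal{B}_\omega(\lambda)$ inside $B(\lambda)$ is by a bound on the \emph{right} key $K_{+}(T)$, not the left key; left-key bounds cut out opposite Demazure crystals. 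In Reiner--Shimozono's actual proof of Theorem~21, the argument is not a one-shot key-bound identification but rather the ``greedy normalization'' you allude to at the end: they perform a sequence of elementary moves (adjusting $\beta$ by adjacent transpositions while relaxing the flag one coordinate at a time toward $\Phi_0$), and at each step they exhibit a Knuth-preserving bijection of the corresponding $\mathcal{W}$-sets using the essential-subword/ascent machinery (the analogues of Lemmas~\ref{lemma 8} and~\ref{lemma 9}). So your second suggested route is in fact the one RS take; your first route, as written, has a gap at the left-key step and would need to be reworked to land on a genuine Demazure (not opposite-Demazure) description of $S$.
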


Now the following proposition tells us that $ \Tab(\lambda/\mu, \Phi)$ is a disjoint union of Demazure crystals.
\begin{proposition}
The rectification map $ \textup{rect}: \mathcal{A}(Q,\lambda/\mu,\Phi) \rightarrow  \mathcal{B}_{\tau}(\widehat{\beta(R)}^{\dagger})$ is a weight-preserving bijection which intertwines the crystal raising and lowering operators $($where defined$)$. Here $\tau$ is any permutation such that $\tau. \widehat{\beta(R)}^{\dagger}=\widehat{\beta(R)}$.
\end{proposition}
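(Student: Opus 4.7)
The plan is to obtain the claimed rectification bijection as the composition of three bijections already established in this appendix: the map $\Omega : \mathcal{A}(Q,\lambda/\mu,\Phi) \to \mathcal{W}(\beta(R), \Phi)$ constructed in the preceding proposition, the map $\zeta : \mathcal{W}(\beta(R), \Phi) \to \mathcal{W}(\widehat{\beta(R)}, \Phi_0)$ of the Reiner--Shimozono theorem cited above, and the Lascoux--Sch\"utzenberger map $\textbf{u} \mapsto P(\textbf{u})$ from $\mathcal{W}(\widehat{\beta(R)}, \Phi_0)$ onto $\mathcal{B}_\tau(\widehat{\beta(R)}^\dagger)$. Since each factor is a bijection, so is the composition $T \mapsto P(\zeta(\Omega(T)))$, and by construction the image lies in the required Demazure crystal.

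The next step is to verify that this composition equals $\textup{rect}$. The Knuth-equivalence statements attached to the previous proposition and to the Reiner--Shimozono theorem guarantee that the three words $\textup{rev}(b_T)$, $\Omega(T)$ and $\zeta(\Omega(T))$ all lie in a single Knuth class. Since the insertion tableau is a complete invariant of a Knuth class, $P(\zeta(\Omega(T))) = P(\textup{rev}(b_T))$, which is by definition $\textup{rect}(T)$. Weight-preservation is then automatic, as the content of a word depends only on its Knuth class.

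The intertwining with the crystal raising and lowering operators is standard: rectification can be realized as a sequence of jeu-de-taquin slides, each of which is a crystal isomorphism in the embedding of skew tableaux into $\mathbb{B}^{\otimes k}$ recalled in Section~\ref{ss:tpcrystal}. Equivalently, two elements of $\mathbb{B}^{\otimes k}$ have the same insertion tableau if and only if they occupy corresponding positions in isomorphic connected components of the ambient crystal. Restricting this morphism to $\mathcal{A}(Q,\lambda/\mu,\Phi)$ yields the desired intertwining.

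The only real subtlety, and the main obstacle, is tracking the flag $\Phi$ through the chain of bijections: one must confirm that $(\textbf{j},\Phi)$-compatibility is preserved under $\Omega$ and then through $\zeta$, so that the image indeed lands in $\mathcal{W}(\widehat{\beta(R)}, \Phi_0)$ and hence in $\mathcal{B}_\tau(\widehat{\beta(R)}^\dagger)$ with $\tau \cdot \widehat{\beta(R)}^\dagger = \widehat{\beta(R)}$. This is precisely the purpose of Lemmas~\ref{lemma 8} and~\ref{lemma 9} on essential subwords and ascents. Once this compatibility bookkeeping is in hand, the identification of the composition with $\textup{rect}$ and its crystal-morphism property follow from the standard Knuth-equivalence/jeu-de-taquin formalism.
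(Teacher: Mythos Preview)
Your proposal is correct and follows essentially the same approach as the paper's own proof: factor the rectification through $P\circ\zeta\circ\Omega$, identify this composite with $\textup{rect}$ via the chain of Knuth equivalences, and invoke the standard compatibility of Knuth equivalence (equivalently, jeu-de-taquin) with the crystal operators. The only remark is that your final paragraph on flag compatibility is not additional work for this proposition---it merely recapitulates what was already proved in establishing $\Omega$ and $\zeta$.
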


\begin{proof}
  We know that $\text{rev}(b_T),\Omega(T), \zeta(\Omega(T)),  P(\zeta(\Omega(T)))$ are all Knuth equivalent. Thus we have $\text{rect}(T)$ $ = P(\zeta(\Omega(T))) \in \mathcal{B}_{\tau}(\widehat{\beta(R)}^{\dagger})$. So the map is well-defined. Clearly, the rectification map is a weight-preserving bijection. Commutativity of the rectification map with the crystal raising and lowering operators  comes from properties of Knuth equivalence. 
\end{proof}

\printbibliography
\end{document}